\newcommand{\R}{\mathbb{R}}
\def\cleardoublepage{\clearpage\if@twoside \ifodd\c@page\else
	\hbox{}
	\vspace*{\fill}
	\begin{center}
	\end{center}
	\vspace{\fill}
	\thispagestyle{empty}
	\newpage
	\if@twocolumn\hbox{}\newpage\fi\fi\fi}
\newtheorem{prop}{Proposition}
\newtheorem{theo}{Theorem}
\newtheorem{lemma}{Lemma}
\newtheorem{conj}{Conjecture}
\newtheorem{coro}{Corollary}
\title{Log-Brunn-Minkowski inequality under symmetry}
\author{K\'aroly J. B\"or\"oczky\thanks{This research was partially supported by National Research, Development and Innovation Office,  NKFI K 132002.}, Pavlos Kalantzopoulos}
\begin{document}
	\maketitle

\noindent{\bf MSC (2010)} 52A40, 05E18, 35J96

\begin{abstract}
	We prove the log-Brunn-Minkowski conjecture for convex bodies with symmetries to $n$ independent hyperplanes, and discuss the equality case and the uniqueness of the solution of the related case of the logarithmic Minkowski problem. We also clarify a small gap in the known argument classifying the equality case of the log-Brunn-Minkowski conjecture for unconditional convex bodies.
\end{abstract}

\section{Introduction}

The classical Brunn-Minkowski inequality and the  Minkowski problem form the core of various areas in fully nonlinear partial differential equations, probability, additive combinatorics and convex geometry
(see Schneider \cite{book4}, Trudinger, X.-J. Wang \cite{wang} and Tao, Vu \cite{TaoVu}). 
For recent related work in the theory of valuations, algorithmic theory and the Gaussian setting, see 
Jochemko, Sanyal \cite{JS}, Kane \cite{kane}, Gardner, Gronchi \cite{GG} and Gardner, Zvavitch \cite{GZ}.
Extending it, Lutwak \cite{Lut1} initiated the rapidly developing new  $L_p$-Brunn-Minkowski theory.

The classical Minkowski's existence theorem due to Minkowski and Aleksandrov describes the so called surface area measure 
$S_K$ on $S^{n-1}$ (the case $p=1$)
 of a convex body $K$ in $\R^n$ where the regularity of the solution  is well investigated by
 Nirenberg \cite{NIR}, Cheng and Yau \cite{CY}, Pogorelov \cite{POG} and Caffarelli \cite{LC}.
After the first major results about the $L_p$-Brunn-Minkowski inequality and $L_p$-Minkowski problem for
 a range of $p$ by Firey \cite{Fir62}, Lutwak \cite{Lut1}, Chou, Wang \cite{CW} and 
Hug, Lutwak, Yang, Zhang \cite{HLYZ2}, the recent papers
Kolesnikov, Milman \cite{KolMilsupernew}, Milman \cite{EMilman},   
Bianchi, B\"or\"oczky, Colesanti, Yang \cite{BBCY},
Chen, Huang, Li \cite{CHL},  Ivaki \cite{Ivaki} present new developments. 

The cone volume measure or $L_0$ surface area measure $V_K$ on  $S^{n-1}$, originating from the papers
Firey \cite{Fir74} and Gromov and Milman \cite{GromovMilman},  has become an indispensable tool in the last decades (see  Barthe, Gu\'{e}don, Mendelson, Naor \cite{BG}, Naor \cite{N45},  Paouris, Werner \cite{PW}, B\"or\"oczky, Henk \cite{BoH16}).
If a convex body $K$ contains the origin, then its cone volume measure is
$dV_K=\frac1n\,h_K\,dS_K$ where $h_K$ is the support function of $K$ and $V_K(S^{n-1})=V(K)$ where the volume $V(K)$ is the $n$-dimensional Lebesgue measure of $K$. 
In particular, the classical formulation of the corresponding Monge-Amp\`ere equation "Logarithmic Minkowski problem" on the sphere $S^{n-1}$ is
\begin{equation}
\label{Mongehf}
h\det(\nabla^2 h+h\,{\rm Id})=  nf
\end{equation}
where $f$ is a given essentially positive function in $L_1(S^{n-1})$,  and the unknown $h$ on $S^{n-1}$ is the restriction of
the support function of a convex body containing the origin to $S^{n-1}$, and  $\nabla^2 h$ is the Hessian of $h$ with respect to a moving orthonormal frame. Following Firey \cite{Fir74} who first stated the Logarithmic Minkowski problem, we consider the Monge-Amp\`ere equation in the Alexandrov sense, namely, given a finite non-trivial Borel measure $\mu$ on $S^{n-1}$, we search for a convex body $K$ with $o\in K$ and
\begin{equation}
\label{MongeVK}
V_K=  \mu.
\end{equation}

Following partial results by Chou, Wang \cite{CW}, 
He, Leng,  Li \cite{HLL06},
Henk, Sch\"urman, Wills \cite{HSW}, Stancu \cite{Stancu,Stancu1},
Xiong \cite{Xio10}
the paper B\"or\"oczky, Lutwak, Yang, Zhang \cite{BLYZ13} characterized even cone volume measures
by the subspace concentration condition; namely, a finite non-trivial Borel even measure $\mu$ on $S^{n-1}$ is a cone volume measure
 if and only if
\begin{description}
\item[(i)] $\mu(L\cap S^{n-1})\leq \frac{{\rm dim}\,L}{n}\cdot\mu(S^{n-1})$ for any proper linear subspace $L$;
\item[(ii)] $\mu(L\cap S^{n-1})=\frac{{\rm dim}\,L}{n}\cdot\mu(S^{n-1})$ in (i)
 is equivalent with
${\rm supp}\,\mu\subset L\cup L^\bot$.
\end{description}

However, the  characterization of the cone volume measure of non-origin symmetric convex bodies is wide open.
While Chen, Li, Zhu \cite{CSL} have recently verified that the subspace concentration condition is sufficient to ensure that a measure on  $S^{n-1}$ is a cone volume measure, not even a meaningful conjecture is known about the right necessary conditions. 
All what is known concerning characterization is that  B\"or\"oczky, Heged\H{u}s \cite{BoH15} characterized the restriction of a cone volume measure to an antipodal pair of points.

 B\"or\"oczky, Lutwak, Yang, Zhang \cite{BLYZ12}
proposed the logarithmic Brunn-Minkowski conjecture Conjecture~\ref{logBMconj} in the even case. Before stating it, we
recall the Brunn-Minkowski inequality, which says that for every pair of convex bodies $K$ and $L$ in $\R^n$, and 
for every $\lambda\in(0,1)$, one has 
\begin{equation}\label{BM-power}
V((1-\lambda) K+\lambda L)^{\frac{1}{n}}\geq (1-\lambda) V(K)^{\frac{1}{n}}+\lambda V(L)^{\frac{1}{n}},
\end{equation}
with equality if and if $K$ and $L$ are homothetic (i.e. $L=\gamma K+x$ for $\gamma>0$, $x\in R^n$);
or, equivalently
\begin{equation}\label{BM-rst}
V((1-\lambda) K+\lambda L)\geq V(K)^{1-\lambda} V(L)^{\lambda}
\end{equation}
with equality if and only if $K$ and $L$ are translates.
See Gardner \cite{gardner} or Schneider \cite{book4} for more details. Analytically, the Brunn-Minkowski inequality has numerous realizations as a Poincare-type inequality (see, e.g. Colesanti \cite{Col1}, Colesanti, Hug, Saorin-Gomez \cite{Col2}, Colesanti, Livshyts, Marsiglietti \cite{CLM}, Kolesnikov, Milman \cite{KolMil}). This approach also led to strong stability versions of the Brunn-Minkowski inequality by Figalli, Maggi, Pratelli \cite{fig1} and Figalli, Jerison \cite{fig2}.
These analytic approaches stem from Hilbert's operator theoretic proof of the Brunn-Minkowski inequality from around 1900
(see Bonnesen, Fenchel \cite{BoF87}), which argument was further developed by Alexandrov in the 1930's, see Kolesnikov, Milman \cite{KolMil} for a modern presentation of their ideas.

If the shapes of $K$ and $L$ are substantially different, then the Brunn-Minkowski  inequality may provide a really bad estimate, which may be insufficient for certain applications. Below we describe a few conjectures  robustly strengthening the Brunn-Minkowski inequality.

For $\lambda\in(0,1)$, the geometric mean of the origin symmetric convex bodies $K$ and $L$ is 
\begin{equation}
(1-\lambda) \cdot K+_0 \lambda\cdot L:=\{x\in \R^n\,:\, \langle x,u\rangle\leq h_K^{1-\lambda}(u) h_L^{\lambda}(u)\,\,
\forall u\in S^{n-1}\}
\subset  (1-\lambda) K+\lambda L,
\end{equation}
where $\langle\cdot,\cdot\rangle$ is the standard scalar product in $\R^n$. The following possible strengthening of the Brunn-Minkowski inequality is widely known as logarithmic Brunn-Minkowski conjecture (see B\"or\"oczky, Lutwak, Yang, Zhang \cite{BLYZ12}
for the origin symmetric case).

\begin{conj}[log-Brunn-Minkowski Conjecture]
	\label{logBMconj} 
	For any pair $K$ and $L$ of convex bodies in $\R^n$, there exist 
	$z_K\in {\rm int}\,K$ and $z_L\in {\rm int}\,L$ such that
	for any $\lambda\in(0,1)$, we have 
	\begin{align}\label{log-B-M-I}
	V((1-\lambda)\cdot (K-z_K) +_0 \lambda\cdot (L-z_L))\geq V(K)^{1-\lambda} V(L)^\lambda
	\end{align}
	where $z_K=z_L=o$ if $K$ and $L$ are origin symmetric.
	In addition, equality holds if and only if $K=K_1+\ldots + K_m$ and $L=L_1+\ldots + L_m$ for compact convex sets 
	$K_1,\ldots, K_m,L_1,\ldots,L_m$ of dimension at least one where $\sum_{i=1}^m{\rm dim}\,K_i=n$
	and $K_i$ and $L_i$ are homothetic, $i=1,\ldots,m$.
\end{conj}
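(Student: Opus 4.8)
The plan is to derive \eqref{log-B-M-I} together with its equality characterization from an \emph{infinitesimal} (local) form of the inequality, using a continuity argument to pass from the local statement to the global one, and then to extract the equality case from the equality case of the associated spectral inequality. A preliminary issue is the choice of the centres $z_K$ and $z_L$: for general, not necessarily symmetric, bodies one expects $z_K$ to be characterized by a balancing (fixed-point) condition on the cone volume measure of $K-z_K$ --- say that its barycentre on $S^{n-1}$ is the origin --- with existence of such a point $z_K\in\operatorname{int}K$ supplied by a degree/continuity argument, and with $z_K=o$ forced when $K$ is origin symmetric. After this normalization it suffices to prove \eqref{log-B-M-I} for pairs of bodies with this centring.

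For the inequality, consider the logarithmic interpolation of support functions $h_t:=h_{K-z_K}^{1-t}\,h_{L-z_L}^{t}$ for $t\in[0,1]$, and let $M_t$ be the Aleksandrov (Wulff) body with support function at most $h_t$, so that $M_0=K-z_K$, $M_1=L-z_L$, and $M_t=(1-t)\cdot (K-z_K)+_0 t\cdot (L-z_L)$. Then \eqref{log-B-M-I} follows from log-concavity of $t\mapsto V(M_t)$ on $[0,1]$. Differentiating twice in $t$ and using the variational formula behind \eqref{MongeVK}, this log-concavity reduces, for bodies of class $C^2_+$, to the \emph{local log-Brunn-Minkowski inequality}: for every such body $M$ with centred cone volume measure and every $g\in C^\infty(S^{n-1})$,
\begin{equation}\label{localBM}
\int_{S^{n-1}} g\,\mathcal{L}_M g\;dV_M\ \ge\ 0,
\end{equation}
where $\mathcal{L}_M$ is the linearization of the operator in \eqref{MongeVK} corrected on the span of the linear functions; this is the Kolesnikov--Milman reformulation. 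Non-smooth or lower-dimensional bodies are then absorbed by approximation and a limiting argument, and in the origin symmetric case $g$ ranges only over even functions.

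The analytic heart is \eqref{localBM}. My approach would be a Bochner/Reilly-type computation: put $\mathcal{L}_M$ in divergence form with respect to the Riemannian metric on $S^{n-1}$ induced by the reverse second fundamental form of $\partial M$, integrate by parts, and bound the resulting quadratic form from below by playing the curvature (zero-order) term against the Dirichlet term. For bodies close to the Euclidean ball this yields the needed spectral gap; the difficulty is to obtain a bound \emph{uniform} over all admissible $M$, and it is exactly here that the conjecture in full generality remains open. Under the symmetry hypothesis exploited in this paper --- invariance under $n$ reflections in independent hyperplanes --- the admissible test functions are forced to be small in a strong sense, which produces the required eigenvalue estimate and hence \eqref{localBM} for that class of bodies; this is the portion of the conjecture the paper actually settles.

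For the equality characterization, equality in \eqref{localBM} forces the extremal $g$ to lie in the span of the coordinate functions restricted to $S^{n-1}$, i.e. to be an infinitesimal translation. Propagating this rigidity along the interpolation path $M_t$ and using the normalization above, one deduces that $\R^n$ decomposes as an orthogonal direct sum of subspaces on each of which $h_{K-z_K}$ and $h_{L-z_L}$ coincide up to a fixed dilation. Invoking the equality case of the classical Brunn--Minkowski inequality \eqref{BM-power} on each block then yields homothetic factors $K_i$ and $L_i$ with $\sum_i\dim K_i=n$, which is the asserted structure. The main obstacle throughout is the uniform version of \eqref{localBM} without symmetry; removing the symmetry hypothesis would require a genuinely new idea beyond the Bochner/spectral machinery.
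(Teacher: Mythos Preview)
The statement you are ``proving'' is labelled \textbf{Conjecture}~\ref{logBMconj} in the paper, and the paper does not prove it; it is stated as an open problem. So there is no proof in the paper to compare your proposal to, and your proposal is not a proof either: you correctly identify that the uniform version of the local inequality \eqref{localBM} is exactly what is missing, and you say so twice. What you have written is a reasonable summary of the Kolesnikov--Milman programme together with the Chen--Huang--Li / Putterman local-to-global step, but it does not close the gap.

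For the special case the paper \emph{does} settle (Theorem~\ref{logBMsymmetry}: bodies invariant under $n$ linear reflections in hyperplanes with trivial common intersection), your sketch of how symmetry helps --- ``the admissible test functions are forced to be small in a strong sense, which produces the required eigenvalue estimate'' --- is \emph{not} what the paper does. The paper's argument is entirely geometric and avoids the spectral machinery: via a result of Barthe--Fradelizi on reflection groups it finds a simplicial cone $C$ whose walls are mirrors for the symmetry group, shows that a linear map sending $C$ to $\R^n_+$ turns $K\cap C$ and $L\cap C$ into the positive orthants of \emph{unconditional} convex bodies $\bar K,\bar L$, and then invokes the already-known unconditional log-Brunn--Minkowski inequality (Theorem~\ref{B.F.M.S.}, proved via coordinatewise products and Pr\'ekopa--Leindler). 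The equality case is likewise read off from the unconditional equality case by pulling the direct-sum decomposition of $\bar K,\bar L$ back through the cone, not from the kernel of a Hilbert--Brunn--Minkowski operator. So even restricted to the symmetric class, your proposed route and the paper's route are genuinely different; the paper's method is elementary and self-contained but specific to this symmetry class, while the spectral route you outline would, if it worked, be more flexible but currently only reaches a neighbourhood of the ball.
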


We note that the choice of the right translates is important in Conjecture~\ref{logBMconj} according to the examples by Nayar, Tkocz \cite{NaT}.

Conjecture \ref{logBMconj} was verified in the plane $\R^2$ by Xi, Leng \cite{XiL16}, $z_K$ and $z_L$ depend both 
on $K$ and $L$. However, one would conjecture that  $z_K$ and $z_L$ can be chosen to be the centroid of $K$ and $L$, 
but this stronger conjecture is open even in the plane. 

In the origin symmetric case, an equivalent formulation of Conjecture~\ref{logBMconj}  is the logarithmic Minkowski conjecture
according to B\"or\"oczky, Lutwak, Yang, Zhang  \cite{BLYZ12}.

\begin{conj}[Log-Minkowski conjecture]
	\label{Log-M}
	If $K$ and $L$ are origin symmetric convex bodies in $\R^n$, $n\geq 2$, then
	\begin{equation}\label{Log-M_eq}
	\int_{S^{n-1}}\log \frac{h_L}{h_K}\,dV_K\geq \frac{V(K)}n\log\frac{V(L)}{V(K)},
	\end{equation}
	with equality as in Conjecture~\ref{logBMconj}.
\end{conj}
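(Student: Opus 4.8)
Since Conjecture~\ref{Log-M} is open in general, the plan is to prove it---equivalently, by \cite{BLYZ12}, the origin-symmetric instance of Conjecture~\ref{logBMconj}---under the hypothesis of this paper, namely that $K$ and $L$ are invariant under the orthogonal reflections $R_1,\dots,R_n$ in $n$ linear hyperplanes $u_1^\perp,\dots,u_n^\perp$ with $u_1,\dots,u_n$ linearly independent. Put $G=\langle R_1,\dots,R_n\rangle\subset O(n)$; since $G$ fixes a bounded body with $o$ in its interior it is a finite reflection group, and the decisive point is that independence forces $\bigcap_{i=1}^{n}u_i^\perp=\{o\}$, so no nonzero linear functional $u\mapsto\langle v,u\rangle$ is $G$-invariant. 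For the inequality \eqref{Log-M_eq}, a standard smoothing---solving the logarithmic Minkowski problem \eqref{MongeVK} for mollified, $G$-invariant densities as in \cite{BLYZ13,CSL}---reduces us to $h_K\in C^\infty(S^{n-1})$, $G$-invariant, with $W_K:=\nabla^2h_K+h_K\,\mathrm{Id}>0$; the equality case is recovered in the last step. Writing $g:=\log(h_L/h_K)$ and $\psi(t):=\log V\big((1-t)\cdot K+_0 t\cdot L\big)$, the desired inequality for $(K,L)$ is the concavity of $\psi$ on $[0,1]$. By the local-to-global reduction for log-Brunn--Minkowski (\cite{KolMilsupernew}; cf.\ \cite{CLM}), which within the class $\mathcal{K}_G$ of $G$-invariant bodies---closed under $\cdot$\,, $+_0$ and Hausdorff limits---reduces the concavity of $\psi$ to an infinitesimal statement, it suffices to prove, for every smooth $K\in\mathcal{K}_G$ with $W_K>0$, the \emph{local log-Minkowski inequality on the symmetric subspace}: for every $G$-invariant $f\in C^2(S^{n-1})$,
\begin{equation}\label{loceq}
\int_{S^{n-1}}\big\langle W_K^{-1}\nabla f,\nabla f\big\rangle\,dS_K\ \ge\ \int_{S^{n-1}}f^2\Big(h_K^{-1}+\mathrm{tr}(W_K^{-1})\Big)dS_K\ -\ \frac1{V(K)}\Big(\int_{S^{n-1}}f\,dS_K\Big)^2 .
\end{equation}
This is exactly $\psi''(0)\le0$: combine $\frac{d}{ds}V(h_K+sf)=\int_{S^{n-1}}f\,dS_K$, the second-variation identity $\frac{d^2}{ds^2}V(h_K+sf)=\int_{S^{n-1}}f^2\,\mathrm{tr}(W_K^{-1})\,dS_K-\int_{S^{n-1}}\langle W_K^{-1}\nabla f,\nabla f\rangle\,dS_K$ (which uses the divergence-free property of cofactor matrices), and $h_K\,dS_K=n\,dV_K$.

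\textbf{Why symmetry removes the obstruction.} For a general origin-symmetric $K$, inequality \eqref{loceq} fails precisely when $f$ is a nonzero linear functional---the direction of an infinitesimal translation of $K$---and these are the only directions of failure; but no such $f$ is $G$-invariant, so all of them have been discarded. Moreover the linearized Monge--Amp\`ere operator attached to $K$ commutes with each $R_i$ (because $h_K$ does), hence preserves the space of $G$-invariant functions, on which the ``translation eigenfunctions'' are simply absent. To use this quantitatively, I would pass to the Reilly/Bochner reformulation of \cite{KolMilsupernew,KolMil}: \eqref{loceq} for $G$-invariant $f$ is equivalent to a weighted Poincar\'e-type inequality, with mean-zero normalization, on a \emph{fundamental chamber} $C=\widetilde C\cap K$ of $G$ (with $\widetilde C$ the simplicial cone cut out by the mirrors), carrying Neumann conditions on the mirror walls---of the shape
\begin{equation}\label{reileq}
\int_{\partial C\cap\partial K}\big\langle \mathrm{II}_{\partial K}\,\nabla_{\partial K}u,\nabla_{\partial K}u\big\rangle\,d\mathcal H^{n-1}\ \le\ \int_{C}\|\nabla^2 u\|_{HS}^{2}\,dx ,
\end{equation}
where $u$ solves the associated harmonic Neumann problem on $C$ with boundary data governed by $f$, and the term that would obstruct for a general body (again, the translations) drops out by symmetry.

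\textbf{The main estimate and the principal obstacle.} The core is \eqref{reileq}. When the $u_i$ are mutually orthogonal---the unconditional case, which is known (cf.\ \cite{BLYZ12,CLM})---the chamber $\widetilde C$ is an orthant, the weight and the Hessian decouple coordinatewise, and \eqref{reileq} follows by tensorizing a one-dimensional inequality, the Neumann boundary contributions having a favorable sign. For general independent $u_i$ the chamber does not split, and the plan is a dimensional induction: restrict $K$ to the fixed hyperplane $u_n^\perp$, obtaining $K'=K\cap u_n^\perp\subset\R^{n-1}$, which is invariant under the $n-1$ reflections induced by $R_1,\dots,R_{n-1}$ in $u_i^\perp\cap u_n^\perp$, whose normals (the projections of $u_1,\dots,u_{n-1}$ to $u_n^\perp$) remain linearly independent; apply the inductive hypothesis to $K'$ and control the ``fiber'' direction $u_n$ together with the cross terms in the Reilly identity by means of the convexity of $K$ and the Neumann condition on the wall $u_n^\perp$. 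I expect the control of these cross terms over the non-orthogonal chamber to be the principal obstacle: they vanish identically in the orthogonal case, whereas in general they must be absorbed into the positive contributions of the second fundamental form and of the induction, and making this bookkeeping close is the technical heart of the argument.

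\textbf{The equality case.} Equality in \eqref{Log-M_eq} forces $\psi$ to be affine, hence \eqref{loceq} (equivalently \eqref{reileq}) to hold with equality at every body on the path $t\mapsto(1-t)\cdot K+_0 t\cdot L$ for the fixed direction $g$. The equality case of \eqref{reileq} forces $\nabla^2u$ to vanish transversally to some $G$-invariant decomposition $\R^n=E_1\oplus\dots\oplus E_m$, which---propagating over $S^{n-1}$ by a connectedness argument and using uniqueness for the relevant Monge--Amp\`ere equation---yields $K=K_1+\dots+K_m$ and $L=L_1+\dots+L_m$ with $K_j,L_j\subset E_j$ and $K_j$ homothetic to $L_j$, i.e.\ the decomposition in Conjecture~\ref{logBMconj}. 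This step is also where the gap in the known unconditional argument is repaired: that argument passed too quickly from the infinitesimal splitting to the product structure, and the fix is to read the block structure of $\nabla^2h_K$ directly off the rigid optimality conditions on the fundamental chamber. Undoing the smoothing by a stability argument then completes the characterization.
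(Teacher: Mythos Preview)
Your approach is genuinely different from the paper's, and it contains a real gap.

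\textbf{The error.} Your crucial sentence is: ``For a general origin-symmetric $K$, inequality \eqref{loceq} fails precisely when $f$ is a nonzero linear functional\ldots and these are the only directions of failure.'' This is not true, and if it were, it would already prove Conjecture~\ref{Log-M} in full for all origin-symmetric bodies: taking $G=\{\pm\mathrm{Id}\}$, the $G$-invariant functions are exactly the even ones, and linear functionals are odd, so they are already excluded---yet the conjecture remains open. What you are describing is the equality case of the \emph{classical} ($p=1$) second-variation inequality; the local $L_0$ inequality \eqref{loceq} carries the additional term $\int f^2 h_K^{-1}\,dS_K$ on the right, which is precisely why its validity on even functions is equivalent (via \cite{KolMilsupernew,CHL}) to the open conjecture, and why \cite{KolMilsupernew} can only establish it near the ball. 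Consequently, the argument that $G$-invariance ``removes the obstruction'' by killing the translation eigenfunctions proves nothing beyond what origin symmetry alone already gives.

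\textbf{The unproved core.} You acknowledge this implicitly when you say the cross terms in the Reilly identity on a non-orthogonal chamber are ``the principal obstacle'' and that ``making this bookkeeping close is the technical heart of the argument.'' That heart is missing: no mechanism is given for absorbing those terms, and the proposed induction on dimension (restricting to $u_n^\perp$) does not obviously close, because the induced reflections on $u_n^\perp$ need not themselves generate a group with trivial fixed subspace in any useful compatibility with the fibered Reilly identity.

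\textbf{What the paper does instead.} The paper avoids the local spectral inequality entirely. Using the Barthe--Fradelizi structure theorem for finite reflection groups with trivial fixed subspace, it produces a simplicial cone $C$ (a product of chambers of regular polytopes) with acute angles, so that a linear map $\Phi$ sending $C$ to $\R^n_+$ turns $K\cap C$ and $L\cap C$ into the positive-orthant parts of \emph{unconditional} convex bodies $\bar K,\bar L$. The log-Brunn--Minkowski inequality then follows cone-by-cone from the known unconditional case (Pr\'ekopa--Leindler via the coordinatewise product), and the log-Minkowski inequality \eqref{Log-M_eq} is deduced from it by the standard variational argument (log-concavity of $\lambda\mapsto V((1-\lambda)\cdot K+_0\lambda\cdot L)$ together with Aleksandrov's lemma). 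The equality analysis is likewise geometric, tracing the unconditional equality case back through $\Phi$ and the Coxeter decomposition; the ``gap'' you allude to in the unconditional argument is repaired by a direct support-function computation (Lemma~\ref{logsumPhi}), not via any infinitesimal block structure of $\nabla^2 h_K$.
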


The argument in \cite{BLYZ13} yields that for $o$-symmetric convex bodies with $C^\infty_+$ boundary, uniqueness of the
convex body with a prescribed cone volume measure 
 is equivalent to the log-Minkowski conjecture (see Section~\ref{secuniqueness}).
In particular,  uniqueness of the solution of the Monge-Amp\'ere equation \eqref{Mongehf} for any prescribed even positive $C^\infty$ function $f$ on $S^{n-1}$ implies 
the log-Brunn-Minkowski
and log-Minkowski conjectures (without the characterization of equality) for any 
$o$-symmetric convex bodies.

Let us summarize what is known about the Log-Brunn-Minkowski Conjecture~\ref{logBMconj} and 
Log-Minkowski Conjecture~\ref{Log-M}.
Concerning planar bodies, B\"or\"oczky, Lutwak, Yang, Zhang  \cite{BLYZ12} verified both conjectures
in origin symmetric case,  and Xi, Leng \cite{XiL16} proved Conjecture~\ref{logBMconj} in full generality. 
Turning to higher dimensions, besides the cases of unconditional convex bodies (see below) and
 complex bodies by Rotem \cite{Rotem}, these conjectures are proved when $K$ is close to be an ellipsoid in the sense of Hausdorff metric by a combination of the local estimates by Kolesnikov, Milman \cite{KolMilsupernew} and the use of the continuity method in PDE by Chen, Huang, Li \cite{CHL}. Another even more recent proof of this result based on Alexandrov's approach of considering the Hilbert-Brunn-Minkowski operator for polytopes and  \cite{KolMilsupernew}  is due to Putterman \cite{Put}. In addition, an isomorphic version of the Logarithmic Minkowski Problem is verified in Milman \cite{EMilman}.

We note that the conjectured uniqueness of the solution  of the Logarithmic, or $L_0$-Minkowski problem \eqref{Mongehf} for even positive $C^\infty$ function $f$ has a special role within the $L_p$-Minkowski Problems as if $p<0$, then
it is known that the even solution may not be unique (see
Jian, Lu, Wang \cite{JLW15},  Li, Liu, Lu \cite{LLL}, Milman \cite{Mil}).

We say that a set $X\subset \R^n$ is invariant under  $A\in{\rm GL}(n)$, if $AX=X$. Recall, a set $X$ is unconditional with respect to a fixed orthonormal basis $e_1,\ldots,e_n$ of $\R^n$ if it is symmetric through each coordinate hyperplane $e_i^\bot$; or in other words, $(x_1,\ldots,x_n)\in X$ implies that $(\pm x_1,\ldots,\pm x_n)\in X$. 

The Log-Brunn-Minkowski Conjecture~\ref{logBMconj} and 
Log-Minkowski Conjecture~\ref{Log-M} were verified for unconditional convex bodies 
(in a slightly stronger form for coordinatewise products, see the Appendix Section~\ref{secAppendix}) by several authors
like
Bollobas, Leader \cite{BoL95} and later indepently by 
Cordero-Erausquin, Fradelizi, Maurey \cite{CEFM04} even before the log-Brunn-Minkowski conjecture 
was stated, and the equality case was described by Saroglou \cite{Sar15}. Actually, the paper  \cite{Sar15} contains a small gap concerning the equality case, and we clarify the argument in the Appendix Section~\ref{secAppendix}.

We note that the arguments about the coordinatewise product of unconditional convex bodies all use the multiplicative form of the Pr\'ekopa-Leindler inequality stated by Ball \cite{Bal} (see also Theorem~\ref{PL}, Uhrin \cite{Uhr94} and Bollobas, Leader \cite{BoL95}). We write $K_1\oplus\ldots \oplus K_m$ to denote the Minkowski sum of compact convex sets $K_1,\ldots,K_m\subset\R^n$ if their affine hulls are pairwise orthogonal.

\begin{theo}[Bollobas-Leader, Uhrin, Saroglou]
\label{B.F.M.S.}
	If $K$ and $L$ are unconditional convex bodies in $\R^n$ with respect to the same orthonormal basis and $\lambda\in(0,1)$, then
	\begin{equation}
	\label{logBMeq}
	V((1-\lambda)\cdot K +_0 \lambda\cdot L)\geq V(K)^{1-\lambda} V(L)^\lambda.
	\end{equation}
	In addition, equality holds  if and only if $K=K_1\oplus\ldots \oplus K_m$ and $L=L_1\oplus\ldots \oplus L_m$ for unconditional compact convex sets 
	$K_1,\ldots, K_m,L_1,\ldots,L_m$ of dimension at least one where $K_i$ and $L_i$ are dilates, $i=1,\ldots,m$.
\end{theo}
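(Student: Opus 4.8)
The plan is to pass to logarithmic coordinates on the positive orthant, where unconditional bodies become convex sets, reduce \eqref{logBMeq} to the Pr\'ekopa--Leindler inequality, and then extract the equality case from the equality case of Pr\'ekopa--Leindler. For an unconditional convex body $K\subset\R^n$, set $A_K=\{t\in\R^n:(e^{t_1},\dots,e^{t_n})\in K\}$. Since $K$ contains the box $\prod_i[-x_i,x_i]$ for every $x\in K$ with $x_i\ge 0$, and since $\sqrt{x_iy_i}\le\tfrac12(x_i+y_i)$, the set $A_K$ is a closed convex set that is non-increasing in each coordinate, and the substitution $x_i=e^{t_i}$ gives $V(K)=2^n\int_{A_K}e^{t_1+\dots+t_n}\,dt$. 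Writing $M=(1-\lambda)\cdot K+_0\lambda\cdot L$, the first step is the inclusion $A_M\supseteq(1-\lambda)A_K+\lambda A_L$: if $s\in A_K$, $t\in A_L$ and $y=(e^{(1-\lambda)s_i+\lambda t_i})_i$, then for every $u\in S^{n-1}$, using $y_i\ge 0$, $h_K(u)=h_K(|u|)$, and H\"older's inequality,
\[
\langle y,u\rangle\le\sum_i\bigl(e^{s_i}|u_i|\bigr)^{1-\lambda}\bigl(e^{t_i}|u_i|\bigr)^{\lambda}\le\Bigl(\sum_i e^{s_i}|u_i|\Bigr)^{1-\lambda}\Bigl(\sum_i e^{t_i}|u_i|\Bigr)^{\lambda}\le h_K(u)^{1-\lambda}h_L(u)^{\lambda},
\]
so $y\in M$. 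Applying the Pr\'ekopa--Leindler inequality to the log-concave functions $f=e^{\langle\cdot,\mathbf 1\rangle}\mathbf 1_{A_K}$ and $g=e^{\langle\cdot,\mathbf 1\rangle}\mathbf 1_{A_L}$, whose $(1-\lambda,\lambda)$ sup-convolution is exactly $e^{\langle\cdot,\mathbf 1\rangle}\mathbf 1_{(1-\lambda)A_K+\lambda A_L}\le e^{\langle\cdot,\mathbf 1\rangle}\mathbf 1_{A_M}$, and then integrating, yields $2^{-n}V(M)\ge(2^{-n}V(K))^{1-\lambda}(2^{-n}V(L))^{\lambda}$, which is \eqref{logBMeq}.

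For the equality case, equality in \eqref{logBMeq} forces both equality in Pr\'ekopa--Leindler and $\int_{A_M}e^{\langle\cdot,\mathbf 1\rangle}=\int_{(1-\lambda)A_K+\lambda A_L}e^{\langle\cdot,\mathbf 1\rangle}$. The equality case of Pr\'ekopa--Leindler (Dubuc) says that $f$ and $g$ coincide with a common log-concave function up to translation and a positive multiplicative constant; comparing this with the explicit form of $f$ and $g$ gives $A_L=A_K+v$ for some $v\in\R^n$, equivalently $L=DK$ with $D=\mathrm{diag}(e^{v_1},\dots,e^{v_n})$ a positive diagonal matrix. Then $(1-\lambda)A_K+\lambda A_L=A_K+\lambda v$, so, since $A_M\supseteq A_K+\lambda v$ and $e^{\langle\cdot,\mathbf 1\rangle}>0$, the second equality forces $A_M=A_K+\lambda v$; that is, $(1-\lambda)\cdot K+_0\lambda\cdot DK=D^\lambda K$ with $D^\lambda=\mathrm{diag}(e^{\lambda v_1},\dots,e^{\lambda v_n})$.

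It remains to show that this last identity forces the claimed direct-sum decomposition. Partition $\{1,\dots,n\}$ into the level sets $I_1,\dots,I_m$ of the map $i\mapsto c_i:=e^{v_i}$, with common value $\gamma_j$ on $I_j$, and put $E_j=\mathrm{span}\{e_i:i\in I_j\}$. Expressed through support functions, the identity $(1-\lambda)\cdot K+_0\lambda\cdot DK=D^\lambda K$ says that the largest sublinear minorant of $w\mapsto h_K(w)^{1-\lambda}h_K(Dw)^{\lambda}$ equals $w\mapsto h_K(D^\lambda w)$. Restricting to a coordinate $2$-plane $\mathrm{span}(e_i,e_j)$ with $c_i\ne c_j$, and using that $h_K$ is non-decreasing on the positive orthant to keep the computation of the sublinear minorant inside that plane, this reduces to the planar statement that an unconditional convex body in $\R^2$ satisfying the analogous identity with $\mathrm{diag}(\gamma,\delta)$, $\gamma\ne\delta$, must be a box; consequently the projection of $K$ onto every coordinate $2$-plane joining two different $I_j$'s is a box, i.e.\ $h_K(e_i+e_j)=h_K(e_i)+h_K(e_j)$ whenever $c_i\ne c_j$. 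This additivity, together with convexity and unconditionality of $K$, propagates to $K=\bigoplus_{j=1}^m(K\cap E_j)$, and then $L=DK=\bigoplus_{j=1}^m\gamma_j(K\cap E_j)$, so $K_j:=K\cap E_j$ and $L_j:=\gamma_j(K\cap E_j)$ are the asserted dilates; the converse direction — that such direct sums yield equality — is a short computation of the support functions of both sides of \eqref{logBMeq}.

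The inequality itself is routine once the logarithmic picture is set up; the real work is in the equality analysis. The main obstacle — and precisely the point where the published argument has a gap — is that $+_0$ does \emph{not} simply exponentiate the Minkowski combination $(1-\lambda)A_K+\lambda A_L$ but may produce a strictly larger body, so in the last step one must control the sublinear hull carefully (the planar lemma and its promotion to coordinate $2$-planes of $\R^n$), and one must also run the equality case of Pr\'ekopa--Leindler for the unbounded convex sets $A_K$ and $A_L$.
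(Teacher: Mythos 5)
The inequality and the reduction of the equality case to the statement ``$L=\Phi K$ for a positive definite diagonal $\Phi$ and $(1-\lambda)\cdot K+_0\lambda\cdot\Phi K=\Phi^\lambda K$'' are correct, and (up to the cosmetic change of working in logarithmic coordinates rather than through coordinatewise products) this is exactly the paper's route via Lemma~\ref{coordinatewise-convex} and Theorem~\ref{coordinatewise-ineq-lemma}. The genuine gap is in your final step, which is precisely the place the paper flags as the spot where Saroglou's published argument has a gap, and where the paper's only new input for unconditional bodies lies (Lemma~\ref{logsumPhi}).

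Your reduction to coordinate $2$-planes fails in two distinct ways. First, the $L_0$-identity does not restrict to coordinate subspaces. For a coordinate $2$-plane $E$ one only gets $P_E\Phi^\lambda K\subseteq(1-\lambda)\cdot P_EK+_0\lambda\cdot P_E(\Phi K)$, because the projection of a Wulff shape is in general a proper subset of the Wulff shape of the restricted data; the monotonicity of $h_K$ on the positive orthant does not upgrade this to equality, so the two-dimensional identity for $P_EK$ is not available. Second, even granting box projections, the additivity $h_K(e_i+e_j)=h_K(e_i)+h_K(e_j)$ for $i,j$ in different eigenspaces does \emph{not} propagate to a direct-sum decomposition. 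In $\R^3$ with $I_1=\{1,2\}$, $I_2=\{3\}$, the unconditional body
\begin{equation*}
K=\bigl\{x\in\R^3:\;x_1^2+x_2^2\leq 1,\ |x_3|\leq 1,\ |x_1|+|x_2|+|x_3|\leq 2\bigr\}
\end{equation*}
has box projections onto ${\rm span}(e_1,e_3)$ and ${\rm span}(e_2,e_3)$ and satisfies $h_K(e_1+e_3)=h_K(e_2+e_3)=2$, yet $(1/\sqrt2,1/\sqrt2,1)\notin K$, so $K\neq(K\cap{\rm span}(e_1,e_2))\oplus(K\cap\R e_3)$. (This example need not satisfy the $L_0$-identity itself; it only shows the implication ``box projections imply direct sum'' is false, which is enough to break your chain of reductions.) The paper instead proves directly, via Lemma~\ref{logsumPhi}, that $\nu_K(\partial'K)$ lies in the union of the eigenspaces of $\Phi$: one supposes a smooth normal $u$ has positive coordinates in two eigenspaces with different eigenvalues, sets $v=\Phi^{-\lambda}u$, and derives a \emph{strict} H\"older inequality contradicting the identity $\langle v,\Phi^\lambda x\rangle=h_K(v)^{1-\lambda}h_{\Phi K}(v)^\lambda$ that follows from $\Phi^\lambda K$ being the Wulff shape; the direct-sum decomposition then follows from Lemma~\ref{SKsumofmany}. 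You would need to replace your coordinate-plane reduction by an argument of this type.
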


We call a map $A\in{\rm GL}(n)$ a linear reflection if 
 $Ax=x$ for $x$ in a $(n-1)$-dimensional linear subspace $H$, $A\neq {\rm Id}_n$ and $A^2= {\rm Id}_n$ (see Davis \cite{Dav08}, Humphreys \cite{Hum90}, Vinberg \cite{Vin71}). In this case, $\det A=-1$
and there exists $u\in S^{n-1}\setminus H$ with $A(u)=-u$.
We observe that a linear reflection $A\in{\rm GL}(n)$ is a classical "orthogonal" reflection if and only if $A\in O(n)$.
In this paper, we show the log-Brunn-Minkowski Conjecture for pairs of convex bodies $K$ and $L$ that have a more general symmetry assumption than unconditional; namely, 
 when $K$ and $L$ are invariant under linear reflections $A_1,...,A_n\in{\rm GL}(n)$ which act identically on some $(n-1)$-dimensional linear subspaces $H_1,...,H_n$ such that $\cap_{i=1}^n H_i=\{o\}$.
We note that the symmetry assumption on $K$ and $L$ in Theorem~\ref{B.F.M.S.} is that for a fixed orthonormal basis $e_1,\cdots,e_n$, both $K$ and $L$ are invariant under the orthogonal reflections through $e_1^\bot,\ldots,e_n^\bot$.

\begin{theo}
	\label{logBMsymmetry}
	Let  $\lambda\in(0,1)$. If $A_1,\ldots,A_n$ are linear reflections such that $H_1\cap\ldots\cap H_n=\{o\}$ holds for the associated hyperplanes $H_1,\ldots,H_n$, and the convex bodies $K$ and $L$ are invariant under 
	$A_1,\ldots,A_n$, then
	$$
	V((1-\lambda)\cdot K +_0 \lambda\cdot L)\geq V(K)^{1-\lambda} V(L)^\lambda.
	$$
	In addition, equality holds  if and only if $K=K_1+\ldots + K_m$ and $L=L_1+\ldots + L_m$ for compact convex sets 
	$K_1,\ldots, K_m,L_1,\ldots,L_m$ of dimension at least one and invariant under $A_1,\ldots,A_n$ where $\sum_{i=1}^m{\rm dim}\,K_i=n$
	and $K_i$ and $L_i$ are homothetic, $i=1,\ldots,m$.
\end{theo}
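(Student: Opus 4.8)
The plan is to prove the inequality first, deferring the equality analysis. Write $G=\langle A_1,\dots,A_n\rangle$; its common fixed space is $H_1\cap\dots\cap H_n=\{o\}$. Since $G$ preserves the bounded set $K$, it is a bounded subgroup of ${\rm GL}(n)$, so after conjugating by a suitable $T\in{\rm GL}(n)$ --- which replaces $(K,L,A_i)$ by $(TK,TL,TA_iT^{-1})$ and turns the claimed inequality into an equivalent one --- we may assume $G\subset O(n)$, so that each $A_i$ is the orthogonal reflection in $H_i=u_i^{\perp}$, and the normals $u_1,\dots,u_n$ are then linearly independent: if $W={\rm span}\{u_1,\dots,u_n\}$ were proper, each $A_i$ would act trivially on $\R^n/W$, hence $G$ would fix $W^{\perp}\neq\{o\}$ pointwise, contradicting $H_1\cap\dots\cap H_n=\{o\}$. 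Note also that the origin is automatically interior to $K$ and to $L$: the centroid of a $G$-invariant convex body is a $G$-fixed point, hence equals $o$; this is why no translations appear (matching the $o$-symmetric case of Conjecture~\ref{logBMconj}). Finally, both sides of the asserted inequality are continuous in the Hausdorff metric, and the $G$-invariant convex bodies with $C^{\infty}_{+}$ boundary are Hausdorff-dense among $G$-invariant bodies (mollify the support function by averaging over $O(n)$ against a smooth kernel concentrated near the identity, which keeps it a $G$-invariant support function, then add a small ball); so for the inequality it suffices to treat $K$ and $L$ with $C^{\infty}_{+}$ boundary.

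For such $K$ and $L$, the plan is to deduce the smooth $G$-symmetric case of the log-Brunn--Minkowski conjecture from its \emph{infinitesimal} form by the continuity method of Chen--Huang--Li. The infinitesimal (local) log-Minkowski inequality at a $C^{\infty}_{+}$ body $N$ is a Poincar\'e-type inequality on $S^{n-1}$ with respect to the cone volume measure $V_N$, satisfied by a test function $\psi$ exactly when perturbing $\log h_N$ in the direction $\psi$ cannot defeat the logarithmic concavity of volume; I would establish it for every $C^{\infty}_{+}$ body $N$ invariant under $G$, \emph{provided} $\psi$ is required to be $G$-invariant. The hypothesis enters decisively here: the defect of this inequality for a general body is supported, after the standard spectral analysis of the linearized Monge--Amp\`ere operator, on low-frequency obstructions, the first of which consists of the restrictions to $S^{n-1}$ of linear functionals $x\mapsto\langle x,z\rangle$; such a functional must vanish once it is $G$-invariant, since then $z\in{\rm Fix}(G)=\{o\}$. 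But ${\rm Fix}(G)=\{o\}$ alone is not enough --- it already holds for $G=\{\pm{\rm Id}\}$, for which the conjecture is open --- and the full force of $n$ \emph{independent} reflection hyperplanes is needed to kill the remaining obstructions as well; doing so is the analytic heart of the argument, and it leans on the local estimates of Kolesnikov--Milman. Granting the local inequality for all $G$-invariant $C^{\infty}_{+}$ bodies, the continuity method --- run along a path of $G$-invariant admissible measures --- upgrades it to uniqueness of the solution of the logarithmic Minkowski problem within the $G$-invariant class, equivalently to the global inequality $V((1-\lambda)\cdot K+_0\lambda\cdot L)\ge V(K)^{1-\lambda}V(L)^{\lambda}$ for $G$-invariant $C^{\infty}_{+}$ bodies, and then, by the approximation above, for all $G$-invariant convex bodies.

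It remains to characterize equality. Granting the inequality, the containment $(1-s)\cdot M_{t_0}+_0s\cdot M_{t_1}\subseteq M_{(1-s)t_0+st_1}$, where $M_t=(1-t)\cdot K+_0t\cdot L$ is again $G$-invariant, forces $t\mapsto\log V(M_t)$ to be concave on $[0,1]$; hence equality for a single $\lambda_0\in(0,1)$ propagates to all $\lambda$, the map $t\mapsto\log V(M_t)$ is affine, and the local log-Minkowski inequality must be tight along the path. Tracing this back, $\psi=\log h_L-\log h_K$ lies in the ``neutral space'' of the local inequality, which --- and identifying this space correctly at an arbitrary $G$-invariant body is the delicate point, and is where the gap in Saroglou's unconditional argument occurred --- consists precisely of logarithms of support functions coming from linear maps respecting a $G$-invariant direct-sum decomposition $\R^{n}=E_1\oplus\dots\oplus E_m$. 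This yields $K=K_1+\dots+K_m$, $L=L_1+\dots+L_m$ with $K_i,L_i\subset E_i$ invariant under $A_1,\dots,A_n$ and homothetic (in fact dilates, since ${\rm Fix}(G|_{E_i})=\{o\}$), $\sum_i\dim E_i=n$; non-smooth pairs are then handled by approximation inside the class together with the rigidity already available in lower dimensions and in the plane (B\"or\"oczky--Lutwak--Yang--Zhang, Xi--Leng). The converse is elementary: writing $L_i=\gamma_iK_i$, H\"older's inequality gives $h_K^{1-\lambda}h_L^{\lambda}\ge\sum_i\gamma_i^{\lambda}h_{K_i}$, so $\sum_i\gamma_i^{\lambda}K_i\subseteq(1-\lambda)\cdot K+_0\lambda\cdot L$, while testing the defining inequalities of $(1-\lambda)\cdot K+_0\lambda\cdot L$ only in directions annihilating $\bigoplus_{j\neq i}E_j$ gives the reverse inclusion, so $(1-\lambda)\cdot K+_0\lambda\cdot L=\sum_i\gamma_i^{\lambda}K_i$, whose volume is $V(K)^{1-\lambda}V(L)^{\lambda}$ by a Fubini computation. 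I expect the equality case to be the main obstacle: pinning down the neutral space of the local inequality at a general $G$-invariant body, and propagating the resulting product structure back through the smoothing, is the step that requires the most care.
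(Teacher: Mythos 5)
Your reduction to orthogonal reflections (via conjugating the bounded group $\langle A_1,\dots,A_n\rangle$ into $O(n)$) is fine and matches, in effect, what the paper does with John's ellipsoid; the observation that the centroid is a $G$-fixed point, hence the origin, is also correct. But from there you propose a route that is genuinely different from the paper's, and as written it has a gap that swallows the entire theorem. The paper does not use a local-to-global argument at all: it constructs, from the Barthe--Fradelizi decomposition, a simplicial fundamental cone $C$ for the reflection group, shows that the ``unfolded'' body $\bar K$ with $\bar K\cap\R^n_+=\Phi(K\cap C)$ is an \emph{unconditional convex body} (this is Proposition~\ref{basikothe}, and it is the real content), and then literally reduces the inequality to the unconditional log-Brunn--Minkowski inequality of Bollob\'as--Leader / Cordero-Erausquin--Fradelizi--Maurey (Pr\'ekopa--Leindler), after which the equality case is traced back through Saroglou's characterization. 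There is no spectral analysis and no continuity method anywhere.

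Your proposal, by contrast, rests on a claimed ``local log-Minkowski inequality for $G$-invariant test functions at \emph{every} $G$-invariant $C^\infty_+$ body'', which you then want to globalize via Chen--Huang--Li / Putterman. That local inequality is precisely the hard step, and you do not prove it. You correctly note that $G$-invariance of $\psi$ kills the linear obstructions (since ${\rm Fix}(G)=\{o\}$), and you correctly note that this alone is not enough ($G=\{\pm{\rm Id}\}$ already has ${\rm Fix}(G)=\{o\}$), but you then write that ``the full force of $n$ independent reflection hyperplanes is needed to kill the remaining obstructions'' and defer this to ``the local estimates of Kolesnikov--Milman''. Those estimates do not give a local log-Minkowski inequality at an arbitrary body; they give it near the ball (or for $L^p$ with $p$ close to $1$), which is what makes the Chen--Huang--Li continuity argument work only in a Hausdorff neighbourhood of the ball. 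Nothing you cite, and nothing you argue, produces a spectral gap for the linearized Monge--Amp\`ere operator on the $G$-invariant subspace that is uniform over all $G$-invariant bodies, and without that uniformity the continuity method does not close. Your treatment of the equality case inherits the same problem: identifying the ``neutral space'' of the (unproved) local inequality is exactly the content you would need to supply, and ``approximation inside the class'' is not a safe way to transfer an equality characterization from smooth to general bodies. In short, the parts you have written out are correct, but the analytic heart of your argument is a conjecture, not a proof, whereas the paper replaces that heart with the elementary folding/unfolding reduction to the unconditional case.
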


The type of symmetry as in Theorem~\ref{logBMsymmetry}  has already occured in
Barthe and Fradelizi \cite{BaF13}, who proved the Mahler conjecture under the same symmetry assumptions,
and in Barthe, Cordero-Erausquin \cite{BaC13}, who bounded the isotropic constant. This project also builds on the approach of \cite{BaF13}.

Let us list various consequences of  Theorem~\ref{logBMsymmetry}.
We observe that Theorem~\ref{logBMsymmetry} settles the log-Brunn-Minkowski conjecture for convex bodies invariant under the symmetry group of a regular polytope. 

\begin{coro}
	\label{logBMsymmetryregular}
	Let  $\lambda\in(0,1)$. If $G$ is the group of symmetries of a regular polytope $P$ centered at the origin $o$ in $\R^n$, and the convex bodies $K$ and $L$ are invariant under 
	$G$, then
	$$
	V((1-\lambda)\cdot K +_0 \lambda\cdot L)\geq V(K)^{1-\lambda} V(L)^\lambda.
	$$
\end{coro}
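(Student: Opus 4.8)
The plan is to deduce the corollary from Theorem~\ref{logBMsymmetry} by exhibiting, inside the symmetry group $G$ of the regular polytope $P$, a suitable collection of $n$ linear reflections whose fixed hyperplanes intersect only at the origin. First I would recall that $G$ is a finite Coxeter group acting on $\R^n$: the symmetry group of a regular polytope is generated by the (orthogonal) reflections in the walls of a fundamental chamber, and since $P$ spans $\R^n$ and is centered at $o$, this action is essential, i.e. the only vector fixed by all of $G$ is $o$. In particular $G$ contains at least $n$ reflections (the simple reflections $s_1,\dots,s_n$ associated to a simple system), and the fixed hyperplanes $H_1,\dots,H_n$ of $s_1,\dots,s_n$ satisfy $H_1\cap\dots\cap H_n=\{o\}$: a common fixed vector of the generators is a common fixed vector of all of $G$, hence is $o$ by essentialness. (Concretely, the normals to $H_1,\dots,H_n$ are the simple roots, which form a basis of $\R^n$, so the intersection of the hyperplanes is trivial.)

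Next I would verify that each such $s_i$ is a linear reflection in the sense of the paper: $s_i\in O(n)\subset{\rm GL}(n)$, it fixes the hyperplane $H_i$ pointwise, $s_i\neq{\rm Id}_n$, and $s_i^2={\rm Id}_n$; moreover $s_i$ sends the corresponding simple root $u_i\in S^{n-1}\setminus H_i$ to $-u_i$. Thus $s_1,\dots,s_n$ are linear reflections (in fact orthogonal reflections) with $\bigcap_{i=1}^n H_i=\{o\}$. Since $K$ and $L$ are invariant under all of $G$, they are in particular invariant under $s_1,\dots,s_n$. Theorem~\ref{logBMsymmetry} applied to the reflections $s_1,\dots,s_n$ and the bodies $K,L$ then yields
\[
V((1-\lambda)\cdot K +_0 \lambda\cdot L)\geq V(K)^{1-\lambda} V(L)^\lambda,
\]
which is exactly the assertion of the corollary.

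The only genuine point requiring care — and the step I expect to be the main (though modest) obstacle — is the structural fact that the symmetry group of a regular polytope centered at $o$ acts essentially and is generated by $n$ reflections whose roots span $\R^n$; this is classical Coxeter/reflection-group theory (see, e.g., Humphreys \cite{Hum90} or the references already cited in the paper), but it is what makes the hypothesis $H_1\cap\dots\cap H_n=\{o\}$ of Theorem~\ref{logBMsymmetry} available. One should also note that no equality characterization is claimed in the corollary, so it suffices to quote only the inequality part of Theorem~\ref{logBMsymmetry}; if desired, the full group invariance of $K$ and $L$ (not merely invariance under $s_1,\dots,s_n$) could be fed into the equality clause of Theorem~\ref{logBMsymmetry} to describe the extremal bodies, but this is not needed for the stated result. $\quad \hfill \Box$
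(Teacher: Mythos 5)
Your argument is correct and follows essentially the same route the paper has in mind: one observes (via the classical fact about finite reflection groups, recorded in Proposition~\ref{prop.for.chambers}(i) in the paper's language of chamber walls, and in the language of simple roots in yours) that $G=\mathcal{O}(P)$ contains $n$ orthogonal reflections whose mirror hyperplanes meet only at the origin, and then invokes Theorem~\ref{logBMsymmetry} for the inequality. The paper leaves this deduction as an observation; your write-up simply makes explicit the same ingredients.
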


According to B\"or\"oczky, Lutwak, Yang, Zhang  \cite{BLYZ12}, the $L_0$-sum is covariant under linear tranformations
(see also \eqref{linearinv} in Section~\ref{secLBM}). Thus for any subgroup $G\subset {\rm GL}(n,\R)$, if
$K$ and $L$ are convex bodies containing the origin in their interior and invariant under $G$, then the same holds for
$(1-\lambda)\cdot K +_0 \lambda\cdot L$ for any $\lambda\in(0,1)$. Therefore, 
Theorem~\ref{logBMsymmetry} and the method of \cite{BLYZ12} imply the following (see 
 Theorem~\ref{logMcalC} in Section~\ref{secuniqueness}).

\begin{theo}
	\label{logMsymmetry}
	Let  $\lambda\in(0,1)$. If $A_1,\ldots,A_n$ are linear reflections such that $H_1\cap\ldots\cap H_n=\{o\}$ holds for the associated hyperplanes $H_1,\ldots,H_n$, and the convex bodies $K$ and $L$ are invariant under 
	$A_1,\ldots,A_n$, then
$$
\int_{S^{n-1}}\log \frac{h_L}{h_K}\,dV_K\geq \frac{V(K)}n\log\frac{V(L)}{V(K)}
$$
	with equality as in Theorem~\ref{logBMsymmetry}.
\end{theo}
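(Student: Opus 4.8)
\emph{The plan} is to deduce Theorem~\ref{logMsymmetry} from Theorem~\ref{logBMsymmetry} by the standard passage from the logarithmic Brunn--Minkowski inequality to the logarithmic Minkowski inequality, carried out inside the class $\mathcal{C}$ of convex bodies containing $o$ in the interior and invariant under $A_1,\dots,A_n$. First I would check that $\mathcal{C}$ is closed under $L_0$-combinations: since the $L_0$-combination commutes with linear maps (as recalled above), $A_i\bigl((1-\lambda)\cdot K+_0\lambda\cdot L\bigr)=(1-\lambda)\cdot K+_0\lambda\cdot L$ for each $i$ whenever $K,L\in\mathcal{C}$, and $o$ stays interior; hence Theorem~\ref{logBMsymmetry} will be at our disposal not only for the pair $K,L$ but for every pair of bodies built from them by $L_0$-combination.

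For the inequality, set $M_\lambda=(1-\lambda)\cdot K+_0\lambda\cdot L$ and
\[
f(\lambda)=\log V(M_\lambda)-(1-\lambda)\log V(K)-\lambda\log V(L),\qquad\lambda\in[0,1].
\]
By Theorem~\ref{logBMsymmetry} one has $f\ge 0$, while $f(0)=f(1)=0$; since $o\in{\rm int}\,K\cap{\rm int}\,L$ the ratio $h_L/h_K$ lies between two positive constants, so $h_K^{1-\lambda}h_L^\lambda\to h_K$ uniformly on $S^{n-1}$ as $\lambda\to 0^+$, and $f$ is continuous on $[0,1]$ and differentiable on $(0,1)$ with a right derivative at $0$. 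Because $M_\lambda$ is the Wulff shape of $h_K^{1-\lambda}h_L^\lambda$, one has $h_{M_\lambda}=h_K^{1-\lambda}h_L^\lambda$ $S_{M_\lambda}$-almost everywhere, and Aleksandrov's variational lemma then gives
\[
\frac{d}{d\lambda}V(M_\lambda)=\int_{S^{n-1}}h_{M_\lambda}\log\frac{h_L}{h_K}\,dS_{M_\lambda}=n\int_{S^{n-1}}\log\frac{h_L}{h_K}\,dV_{M_\lambda}.
\]
Evaluating at $\lambda=0$ produces $f'(0^+)=\frac{n}{V(K)}\int_{S^{n-1}}\log(h_L/h_K)\,dV_K-\log\frac{V(L)}{V(K)}$, and $f'(0^+)\ge 0$ (which holds since $f\ge 0=f(0)$) is precisely the claimed log-Minkowski inequality.

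For the equality case I would show that equality, i.e.\ $f'(0^+)=0$, forces $f\equiv 0$ on $[0,1]$. Applying the log-Minkowski inequality---now available for every pair in $\mathcal{C}$---to the pairs $(M_\lambda,K)$ and $(M_\lambda,L)$, and using $h_{M_\lambda}=h_K^{1-\lambda}h_L^\lambda$ $V_{M_\lambda}$-almost everywhere to rewrite both integrands as constant multiples of $\log(h_L/h_K)$, one unwinds the two resulting inequalities into the differential sandwich
\[
-\frac{f(\lambda)}{1-\lambda}\ \le\ f'(\lambda)\ \le\ \frac{f(\lambda)}{\lambda}\qquad(0<\lambda<1).
\]
The right-hand inequality says $\lambda\mapsto f(\lambda)/\lambda$ is non-increasing on $(0,1)$, and since $f(\lambda)/\lambda\to f'(0^+)=0$ as $\lambda\to 0^+$, it follows that $f(\lambda)/\lambda\le 0$, hence $f\le 0$, hence (with $f\ge 0$) $f\equiv 0$. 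In particular $V(M_{1/2})=V(K)^{1/2}V(L)^{1/2}$, so the equality case of Theorem~\ref{logBMsymmetry} yields $K=K_1+\dots+K_m$ and $L=L_1+\dots+L_m$ with $K_i,L_i$ homothetic, each summand invariant under $A_1,\dots,A_n$ and $\sum_i\dim K_i=n$---exactly the asserted structure. Conversely, if $K$ and $L$ admit such a decomposition then Theorem~\ref{logBMsymmetry} gives $f\equiv 0$, hence $f'(0^+)=0$, i.e.\ equality in the log-Minkowski inequality; so no separate computation for that direction is needed.

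The part I expect to require the most care is the rigorous justification of the first-variation (Aleksandrov) formula together with the almost-everywhere identity $h_{M_\lambda}=h_K^{1-\lambda}h_L^\lambda$ for $S_{M_\lambda}$ and $V_{M_\lambda}$, and the bookkeeping that turns the two applications of the log-Minkowski inequality into the differential sandwich above---this is where the method of \cite{BLYZ12} enters. The remaining ingredients (closure of $\mathcal{C}$ under $+_0$, boundedness of $h_L/h_K$, the elementary monotonicity argument for $f$) are routine, and the whole equality analysis of Theorem~\ref{logBMsymmetry} is invoked only as a black box at a single value of $\lambda$.
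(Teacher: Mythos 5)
Your proof is correct. The inequality part follows essentially the same path as the paper's Theorem~\ref{logMcalC}: introduce the function measuring the defect in the log-Brunn--Minkowski inequality, note it is nonnegative and vanishes at the endpoints, compute its right derivative at $0$ by Aleksandrov's variational lemma, and read off the log-Minkowski inequality from $f'(0^+)\ge 0$. (You keep $V(K)$ and $V(L)$ general rather than normalizing them to $1$; this is why the precise constant in the first-variation formula matters, and your version $\frac{d}{d\lambda}V(M_\lambda)=\int h_{M_\lambda}\log(h_L/h_K)\,dS_{M_\lambda}$ is the correct one---the $\tfrac1n$ in the paper's displayed Lemma~\ref{Alexandrov} is evidently a typo, which is harmless in the paper's normalized argument but would spoil yours.) Where your proposal genuinely departs from the paper is in the equality analysis. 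The paper establishes that $\lambda\mapsto V(M_\lambda)$ is log-concave, by combining the log-Brunn--Minkowski inequality for the pair $(M_\sigma,M_\tau)$ with the Wulff-shape inclusion $(1-\lambda)\cdot M_\sigma+_0\lambda\cdot M_\tau\subset M_{(1-\lambda)\sigma+\lambda\tau}$, and then uses concavity of $\log V(M_\lambda)$ together with $f(0)=f(1)$ to show $f'(0^+)=0\Leftrightarrow f\equiv 0$. You instead apply the already-established log-Minkowski inequality to the pairs $(M_\lambda,K)$ and $(M_\lambda,L)$, use the $S_{M_\lambda}$-a.e.\ identity $h_{M_\lambda}=h_K^{1-\lambda}h_L^\lambda$ to rewrite the integrands, and obtain the first-order sandwich $-f(\lambda)/(1-\lambda)\le f'(\lambda)\le f(\lambda)/\lambda$; the monotonicity of $f(\lambda)/\lambda$ then does the work. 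Your sandwich is exactly the first-order consequence of concavity of $\log V(M_\lambda)$, so it carries strictly less information than the paper's log-concavity, but it suffices for the conclusion and trades the Wulff-shape inclusion lemma for two more uses of the inequality just proved. Both approaches then finish identically: $f\equiv 0$ gives $V(M_{1/2})=\sqrt{V(K)V(L)}$, and the equality case of Theorem~\ref{logBMsymmetry} yields the decomposition.
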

{\bf Remark } If in addition, $V(K)=V(L)$ in  Theorem~\ref{logMsymmetry}, then
$$
	\int_{S^{n-1}}\log h_L\,dV_K\geq \int_{S^{n-1}}\log h_K\,dV_K.
$$

Via the method of Saroglou \cite{Sar16} (see Section~\ref{secGaussian}), we obtain the analogue of
Theorem~\ref{logBMsymmetry} for the Gaussian measure $\gamma$ where
$d\gamma(x)=\frac1{(2\pi)^n}\exp(\frac{-\|x\|^2}2)\,dx$.

\begin{theo}
	\label{Gaussian-BMsymmetry}
	Let  $\lambda\in(0,1)$. If  $H_1\cap\ldots\cap H_n=\{o\}$ holds for the linear hyperplanes $H_1,\ldots,H_n$, and the convex bodies $K$ and $L$ are invariant under the orthogonal reflections through $H_1,\ldots,H_n$, then
	$$
	\gamma((1-\lambda)\cdot K +_0 \lambda\cdot L)\geq \gamma(K)^{1-\lambda} \gamma(L)^\lambda.
	$$
\end{theo}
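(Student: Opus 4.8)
The plan is to deduce the Gaussian inequality of Theorem~\ref{Gaussian-BMsymmetry} from the Lebesgue-measure version in Theorem~\ref{logBMsymmetry} by following the tensorization/lifting argument of Saroglou~\cite{Sar16}. The starting observation is that for the standard Gaussian the density is log-concave and rotation invariant, so if $K,L\subset\R^n$ are invariant under the orthogonal reflections $\sigma_1,\dots,\sigma_n$ through $H_1,\dots,H_n$ with $\bigcap_i H_i=\{o\}$, we want an inequality $\gamma((1-\lambda)\cdot K+_0\lambda\cdot L)\ge \gamma(K)^{1-\lambda}\gamma(L)^\lambda$. Saroglou's idea is to realize the Gaussian measure on $\R^n$ as a limit (after normalization) of the cone measure / Lebesgue measure of a large ball, or more precisely to use the fact that $\gamma$ on $\R^n$ is the pushforward of the uniform (Lebesgue) measure on a high-dimensional sphere or ball under the natural projection — the Poincaré-Maxwell-Borel lemma. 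Concretely, one embeds $\R^n$ in $\R^{n+m}$, replaces $K$ by the cylinder $K\times D^m_R$ (or works with $K\times\R^m$ truncated), applies the \emph{linear-invariance} of the $L_0$-sum together with Theorem~\ref{logBMsymmetry} in dimension $n+m$ for the lifted bodies, and then lets $m\to\infty$ (or lets the radius grow appropriately) so that the normalized Lebesgue measure converges to the Gaussian.

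The key steps, in order, are as follows. First I would record the precise statement of linear invariance of the geometric mean, namely that for $T\in\mathrm{GL}(k)$ one has $T\bigl((1-\lambda)\cdot K+_0\lambda\cdot L\bigr)=(1-\lambda)\cdot TK+_0\lambda\cdot TL$; this is \eqref{linearinv} and is needed so that the symmetry hypotheses are preserved under the lifting. Second, I would set up the lift: given $K,L\subset\R^n$ invariant under orthogonal reflections through $H_1,\dots,H_n$, form bodies $\widetilde K,\widetilde L\subset\R^{n}\times\R^{m}$ that are invariant under the orthogonal reflections through $H_1\oplus\R^m,\dots,H_n\oplus\R^m$ together with the $m$ new coordinate hyperplanes — these $n+m$ hyperplanes still intersect only at the origin, so Theorem~\ref{logBMsymmetry} applies in $\R^{n+m}$. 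The natural choice (following Saroglou) is to take $\widetilde K$ defined via the support function $h_{\widetilde K}(u,v)=$ an explicit function combining $h_K$ with a Gaussian-type profile in the $v$ variable, chosen so that $\mathrm{vol}_{n+m}(\widetilde K)$, suitably normalized, tends to $\gamma(K)$ as $m\to\infty$, and so that $(1-\lambda)\cdot\widetilde K+_0\lambda\cdot\widetilde L$ has the analogous property with $(1-\lambda)\cdot K+_0\lambda\cdot L$. Third, apply Theorem~\ref{logBMsymmetry} to $\widetilde K,\widetilde L$ to get $\mathrm{vol}\bigl((1-\lambda)\cdot\widetilde K+_0\lambda\cdot\widetilde L\bigr)\ge \mathrm{vol}(\widetilde K)^{1-\lambda}\mathrm{vol}(\widetilde L)^\lambda$. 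Fourth, normalize by the appropriate power of the volume of the lifting ball/cylinder and pass to the limit $m\to\infty$, using dominated convergence to identify the three limits as $\gamma((1-\lambda)\cdot K+_0\lambda\cdot L)$, $\gamma(K)$, $\gamma(L)$ respectively.

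The main obstacle I anticipate is \emph{not} the limiting argument itself but verifying that the $L_0$-sum behaves correctly under the lift — that is, that for the explicitly chosen profiles one has the inclusion $(1-\lambda)\cdot\widetilde K+_0\lambda\cdot\widetilde L\subseteq \widetilde{M}$ (or an approximate version with controlled error as $m\to\infty$), where $\widetilde M$ is the lift of $M:=(1-\lambda)\cdot K+_0\lambda\cdot L$. Because the geometric mean of support functions does not interact linearly with an added Gaussian profile in the extra coordinates, one must either choose the profile so that it is reproduced exactly by the $L_0$-operation (a log-linear profile in $\|v\|^2$ does the right thing up to lower-order terms) or carry an $o(1)$ error through the volume estimate; checking that this error is genuinely negligible after the normalization $m\to\infty$ is the delicate point. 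A secondary technical issue is handling non-smoothness and the possibility that $o\notin\mathrm{int}\,K$; but since the reflections force $o\in K$ (the fixed subspaces $H_i$ meet only at $o$, and invariance under an orthogonal reflection through $H_i$ forces symmetry in the $H_i^\perp$ direction, hence together they pin $o$ inside $K$, or a standard approximation by bodies containing $o$ in the interior disposes of it), this can be dealt with by the usual density/continuity arguments. Once the lift is set up correctly, the inequality follows, and the proof is complete. \prend
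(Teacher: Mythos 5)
You correctly identify the overall strategy: Theorem~\ref{Gaussian-BMsymmetry} should follow from Theorem~\ref{logBMsymmetry} by running Saroglou's transfer from Lebesgue measure to log-concave measures and checking that every auxiliary body appearing in that transfer inherits the reflection symmetries of $K$ and $L$ (because the density is rotationally invariant). That is exactly the paper's position, but the paper does not spell out Saroglou's mechanism; it merely observes that the bodies $M\cap\{\phi\le r\}$ occurring in the proofs of Lemma~3.7 and Theorem~3.1 of \cite{Sar16} are $G$-invariant, since $\{\phi\le r\}$ is a Euclidean ball. That phrasing points to an argument run entirely inside $\R^n$: intersect $K,L$ with nested balls $B_s=\{\phi\le s\}$, apply the Lebesgue log-Brunn--Minkowski inequality to $K\cap B_s$ and $L\cap B_t$, use the inclusion $(1-\lambda)\cdot(K\cap B_s)+_0\lambda\cdot(L\cap B_t)\subseteq M\cap B_{(1-\lambda)s+\lambda t}$ (monotonicity of $+_0$ together with log-concavity of $s\mapsto\psi^{-1}(s)$, the radius of $B_s$), and then integrate the resulting pointwise bound against $e^{-t}\,dt$ via the one-dimensional Pr\'ekopa--Leindler inequality. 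Your proposal instead describes a Poincar\'e--Maxwell--Borel dimension-lifting argument in $\R^{n+m}$ with $m\to\infty$, which appears to be a genuinely different realization of the transfer.

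Whether or not it matches Saroglou's original route, your sketch of the lifting argument has a real gap exactly where you flag it, and the diagnosis there is off. You are unsure whether $(1-\lambda)\cdot\widetilde K+_0\lambda\cdot\widetilde L\subseteq\widetilde M$ holds exactly or only up to an $o(1)$ error, and you propose to \emph{define} $\widetilde K$ by prescribing a support function tuned so that the $L_0$-operation ``reproduces the profile.'' That is looking for difficulty in the wrong place. The correct lift is not prescribed by its support function; it is simply
\[
\widetilde K=(K\times\R^m)\cap\sqrt m\,B_2^{n+m},\qquad\widetilde M=(M\times\R^m)\cap\sqrt m\,B_2^{n+m},
\]
and the inclusion is \emph{exact} and elementary. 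Since $\widetilde K,\widetilde L\subseteq\sqrt m\,B_2^{n+m}$ one has $h_{\widetilde K}^{1-\lambda}h_{\widetilde L}^{\lambda}\le h_{\sqrt m B_2^{n+m}}$, so the $L_0$-sum of the lifts lies in the ball; and testing against directions $(u,0)$ gives $h_{\widetilde K}(u,0)=h_{K\cap\sqrt m B_2^n}(u)\le h_K(u)$, whence the $L_0$-sum of the lifts also lies in $M\times\R^m$. There is no error term to control; what remains is only the dominated-convergence limit $V(\widetilde K)/V(\sqrt m\,B_2^m)\to c_n\,\gamma(K)$, with $c_n$ a dimensional constant that cancels on both sides of the inequality. (Your side remark that $o\in{\rm int}\,K$ because the centroid is fixed by every $A_i$ and hence lies in $\bigcap_iH_i=\{o\}$ is correct.) So: the route you sketch seems to differ from what the paper indicates, and the crucial inclusion is left unjustified in your write-up; but the gap is fillable, and once the lift is taken to be the cylinder--ball intersection rather than a support-function-prescribed body the argument closes cleanly.
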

\noindent{\bf Remark} Actually, Theorem~\ref{Gaussian-BMsymmetry} holds for any log-concave measure with rotationally symmetric density in place of the Gaussian density (see 
Theorem~\ref{Logconv-BMsymmetry} in Section~\ref{secGaussian}).\\

Theorem~\ref{logBMsymmetry} together with Proposition~1 in
Livshyts, Marsiglietti, Nayar, Zvavitch \cite{LMNZ}  immediately imply the following
concerning the conjecture of Gardner, Zvavitch \cite{GZ}.

\begin{theo}
	\label{GZ-BMsymmetry1}
	Let  $\lambda\in(0,1)$. If  $H_1\cap\ldots\cap H_n=\{o\}$ holds for the linear hyperplanes $H_1,\ldots,H_n$, and the convex bodies $K$ and $L$ are invariant under the orthogonal reflections through $H_1,\ldots,H_n$, then
$$
\gamma((1-\lambda) K + \lambda\, L)^{\frac1n}\geq (1-\lambda)\gamma(K)^{\frac1n}
+\lambda\,\gamma(L)^{\frac1n}.
$$
\end{theo}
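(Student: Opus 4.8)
The plan is to deduce Theorem~\ref{GZ-BMsymmetry1} from Theorem~\ref{Gaussian-BMsymmetry} (the Gaussian log-Brunn-Minkowski inequality under the same symmetry) combined with Proposition~1 of Livshyts, Marsiglietti, Nayar, Zvavitch \cite{LMNZ}, exactly as the paragraph preceding the statement advertises. Concretely, the task reduces to verifying the hypotheses of that Proposition: it asserts, for a measure with sufficiently nice (even, or in this case rotationally symmetric log-concave) density, that the Gaussian-type Brunn-Minkowski inequality in $\tfrac1n$-concave form
$$
\gamma((1-\lambda)K+\lambda L)^{1/n}\ge(1-\lambda)\gamma(K)^{1/n}+\lambda\,\gamma(L)^{1/n}
$$
holds for a given pair $K,L$ whenever the corresponding \emph{logarithmic} (dimension-free, $L_0$) inequality
$$
\gamma((1-\lambda)\cdot K+_0\lambda\cdot L)\ge\gamma(K)^{1-\lambda}\gamma(L)^{\lambda}
$$
holds for that pair and all intermediate parameters. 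So the proof is essentially a one-line citation once the pieces are in place.

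The steps, in order, would be: first, record that $K$ and $L$ are invariant under the orthogonal reflections through $H_1,\dots,H_n$ with $\bigcap_i H_i=\{o\}$, so Theorem~\ref{Gaussian-BMsymmetry} applies and gives the Gaussian log-Brunn-Minkowski inequality for the pair $(K,L)$ — and, crucially, also for every pair obtained along the way (the hypothesis of \cite[Prop.~1]{LMNZ} typically needs the $L_0$ inequality not just for $(K,L)$ but for $K$ and the intermediate geometric means, all of which remain invariant under the same reflections because the $L_0$-sum respects linear symmetries, cf.\ \eqref{linearinv}). Second, invoke Proposition~1 of \cite{LMNZ}, whose content is precisely the implication ``Gaussian log-BM for a pair $\Rightarrow$ Gaussian $\tfrac1n$-concave BM for that pair,'' to upgrade the multiplicative inequality to the claimed power form. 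Third, note that the density $(2\pi)^{-n}\exp(-\|x\|^2/2)$ is rotationally symmetric and log-concave, so no extra hypothesis of \cite{LMNZ} is violated; one could also phrase everything for a general rotationally symmetric log-concave measure as in the Remark after Theorem~\ref{Gaussian-BMsymmetry}, but for the stated theorem the Gaussian case suffices.

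I do not expect a genuine obstacle here, since this is a corollary-by-combination; the only point requiring care is bookkeeping. One must make sure that the symmetry class is preserved under the operations appearing in \cite[Prop.~1]{LMNZ} — that is, check that all the bodies to which one applies Theorem~\ref{Gaussian-BMsymmetry} (the $L_0$-interpolants $(1-t)\cdot K+_0 t\cdot L$) are themselves invariant under the orthogonal reflections through $H_1,\dots,H_n$. This follows from the linear invariance of the $L_0$-sum noted after Corollary~\ref{logBMsymmetryregular}, since each orthogonal reflection is an element of $\mathrm{GL}(n,\R)$ fixing both $K$ and $L$. With that verification in hand, Theorem~\ref{Gaussian-BMsymmetry} supplies the Gaussian log-Brunn-Minkowski inequality for every pair needed, and \cite[Prop.~1]{LMNZ} delivers the conclusion. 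Thus the proof amounts to: apply Theorem~\ref{Gaussian-BMsymmetry}, then quote \cite[Proposition~1]{LMNZ}. $\hfill\Box$
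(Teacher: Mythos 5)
Your approach misidentifies both the input theorem and the content of the cited proposition. The paragraph preceding Theorem~\ref{GZ-BMsymmetry1} credits Theorem~\ref{logBMsymmetry} --- the log-Brunn-Minkowski inequality for the \emph{Lebesgue} measure under the given reflection symmetries --- not Theorem~\ref{Gaussian-BMsymmetry}, and the difference is not cosmetic. Proposition~1 of Livshyts--Marsiglietti--Nayar--Zvavitch takes as hypothesis the log-Brunn-Minkowski inequality for the \emph{Lebesgue} measure (on the relevant bodies and their dilates) and concludes the $\tfrac1n$-power Brunn-Minkowski inequality for any even log-concave measure. It is not the implication ``$\gamma$-log-BM for a pair implies $\gamma$-$\tfrac1n$-BM for that pair'' that you describe, and that implication cannot be salvaged by a power-mean observation: since
$$\gamma(K)^{1-\lambda}\gamma(L)^{\lambda}\ \leq\ \left((1-\lambda)\gamma(K)^{1/n}+\lambda\,\gamma(L)^{1/n}\right)^{n},$$
knowing that $\gamma((1-\lambda)K+\lambda L)$ dominates the (smaller) geometric mean tells you nothing about whether it dominates the (larger) $\tfrac1n$-power mean.

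The reason the LMNZ proposition insists on the Lebesgue hypothesis is that its proof exploits the homogeneity of Lebesgue measure --- rescaling one of the bodies by a dilation to match volumes --- together with Saroglou's transference to log-concave densities. Homogeneity fails for $\gamma$, so starting from Theorem~\ref{Gaussian-BMsymmetry} does not feed that machinery; it bypasses it. Your bookkeeping instinct is the right one: one must check that the symmetry class is preserved under dilation, Minkowski combination, and $L_0$-interpolation (all of which hold by \eqref{linearinv} and by the fact that the reflections in question are linear). But this check serves to justify applying Theorem~\ref{logBMsymmetry} to the bodies that arise inside the LMNZ argument, not to justify a self-contained ``Gaussian log-BM $\Rightarrow$ Gaussian $\tfrac1n$-BM'' step that the proposition does not assert.
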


We note that Eskenazis, Moschidis \cite{EsM21} proved the Gardner, Zvavitch conjecture in \cite{GZ} for origin symmetric convex bodies, and Kolesnikov, Livshyts \cite{KolLiv} obtained partial results concerning the Gardner-Zvavitch conjecture if the Gaussian centroids of the convex bodies $K$ and $L$ are the origin.

Concerning uniqueness of cone-volume measure, we have the following statement resulting from
Theorem~\ref{logMsymmetry} and the
method of B\"{o}r\"{o}czky, Lutwak, Yang, Zhang  \cite{BLYZ12} that was dealing with $o$-symmetric convex bodies
(see Section~\ref{secuniqueness}).

\begin{theo}
	\label{Minkowski-symmetry-uniqueness}
	Let   $A_1,\ldots,A_n$ be linear reflections such that $H_1\cap\ldots\cap H_n=\{o\}$ holds for the associated hyperplanes $H_1,\ldots,H_n$. For convex bodies $K$ and $L$ are invariant under 
	$A_1,\ldots,A_n$, we have $V_K=V_L$ if and only if $V(K)=V(L)$ and
	$K=K_1+\ldots + K_m$ and $L=L_1+\ldots + L_m$ for compact convex sets 
	$K_1,\ldots, K_m,L_1,\ldots,L_m$ of dimension at least one and invariant under $A_1,\ldots,A_n$ where 
	$\sum_{i=1}^m{\rm dim}\,K_i=n$
	and $K_i$ and $L_i$ are homothetic, $i=1,\ldots,m$.
\end{theo}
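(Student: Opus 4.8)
The plan is to obtain Theorem~\ref{Minkowski-symmetry-uniqueness} as a consequence of Theorem~\ref{logMsymmetry} together with its equality description and some elementary properties of the cone volume measure, mimicking the scheme of \cite{BLYZ12} for $o$-symmetric bodies. First I would record a preliminary observation used throughout: since $K$ is invariant under each linear reflection $A_j$ and $A_j$ is linear, the centroid of $K$ is a fixed point of every $A_j$, hence lies in $H_1\cap\dots\cap H_n=\{o\}$; thus $o\in\mathrm{int}\,K$, and similarly $o\in\mathrm{int}\,L$. Consequently $V_K$ and $V_L$ are honest cone volume measures on $S^{n-1}$ with total masses $V(K)$ and $V(L)$, and $h_K,h_L$ are positive and continuous.

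For the forward implication I would assume $V_K=V_L$; comparing total masses already gives $V(K)=V(L)$. Applying Theorem~\ref{logMsymmetry} to the (admissible, since the hypothesis is symmetric in the two bodies) pairs $(K,L)$ and $(L,K)$ gives $\int_{S^{n-1}}\log(h_L/h_K)\,dV_K\geq0$ and $\int_{S^{n-1}}\log(h_K/h_L)\,dV_L\geq0$. Adding these and using $V_K=V_L$ shows that the sum of the two nonnegative left-hand sides is $0$, so each vanishes; in particular equality holds in Theorem~\ref{logMsymmetry} for $(K,L)$, with $V(K)=V(L)$. The equality case (which is that of Theorem~\ref{logBMsymmetry}) then produces the asserted decomposition $K=K_1+\dots+K_m$, $L=L_1+\dots+L_m$ with $K_i,L_i$ invariant under $A_1,\dots,A_n$, $\sum_i\dim K_i=n$, and $K_i$ homothetic to $L_i$.

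For the reverse implication I would start from $V(K)=V(L)$ and the decomposition, and first use the symmetry to upgrade the homotheties to dilations: writing $L_i=\gamma_iK_i+x_i$, invariance of $L_i$ under $A_j$ forces $A_jx_i=x_i$ for all $j$ (a convex body has no nonzero period), hence $x_i\in H_1\cap\dots\cap H_n=\{o\}$ and $L_i=\gamma_iK_i$. Setting $E_i=\mathrm{lin}\,K_i$ and $d_i=\dim E_i$, so $\R^n=E_1\oplus\dots\oplus E_m$, the equalities $V(K)=c\prod_iV_{d_i}(K_i)$ and $V(L)=c\prod_i\gamma_i^{d_i}V_{d_i}(K_i)$ (same constant $c>0$, depending only on the relative position of the $E_i$) translate $V(K)=V(L)$ into $\prod_i\gamma_i^{d_i}=1$. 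It then remains to verify that $\sum_iK_i$ and $\sum_i\gamma_iK_i$ have the same cone volume measure whenever $\prod_i\gamma_i^{d_i}=1$. I would reduce to the orthogonal case by choosing $S\in\mathrm{GL}(n)$ taking $E_1\oplus\dots\oplus E_m$ to an orthogonal direct sum; by the linear covariance of the cone volume measure, $V_K=V_L$ iff $V_{SK}=V_{SL}$, and now $SK=\bigoplus_iSK_i$, $SL=\bigoplus_i\gamma_i(SK_i)$ are orthogonal direct sums. For an orthogonal direct sum $P=\bigoplus_iP_i$ (each $P_i\ni o$), a direct computation from $dV_P=\tfrac1n h_P\,dS_P$ — using that, up to a set of $S_P$-measure zero, $\partial P$ splits into the facet-type pieces $P_1\times\cdots\times(\mathrm{relbd}\,P_i)\times\cdots\times P_m$ whose outer normals lie in $E_i$, that $h_P=h_{P_i}$ on $S^{n-1}\cap E_i$, and that the surface area measure of such a piece is $\big(\prod_{j\ne i}V_{d_j}(P_j)\big)\,dS_{P_i}$ — yields
$$
V_P\big|_{S^{n-1}\cap E_i}=\frac{d_i}{n}\Big(\prod_{j\ne i}V_{d_j}(P_j)\Big)V_{P_i},
\qquad V_P\ \text{is concentrated on}\ \bigcup_i\big(S^{n-1}\cap E_i\big),
$$
where $V_{P_i}$ is the cone volume measure of $P_i$ inside $E_i$. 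Feeding in $P_i=SK_i$ versus $\gamma_i(SK_i)$, together with $V_{\gamma_i(SK_i)}=\gamma_i^{d_i}V_{SK_i}$, each restriction of $V_{SL}$ equals $\prod_j\gamma_j^{d_j}=1$ times the corresponding restriction of $V_{SK}$, so $V_{SK}=V_{SL}$ and hence $V_K=V_L$.

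As for the main obstacle: essentially all the depth here already resides in Theorem~\ref{logMsymmetry} and its equality case, so the present statement is more of a formal corollary. The points needing genuine care are the little ``add the two log-Minkowski inequalities'' trick in the forward direction (to turn $V_K=V_L$ into an equality case), the use of $H_1\cap\dots\cap H_n=\{o\}$ to kill the translation vectors $x_i$ in the reverse direction, and the somewhat fiddly but routine bookkeeping for the cone volume measure of an orthogonal direct sum; none of these is a serious obstacle.
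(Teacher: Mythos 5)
Your proof is correct, and the nontrivial (forward) direction follows the paper's scheme exactly: from $V_K=V_L$ (hence $V(K)=V(L)$) one forces equality in the log-Minkowski inequality of Theorem~\ref{logMsymmetry} and then invokes the equality characterization coming from Theorem~\ref{logBMsymmetry}. The paper phrases the first step as a single chain of inequalities in its Theorem~\ref{Minkowski-symmetry-uniqueness0}, whereas you apply the inequality to $(K,L)$ and $(L,K)$ and add; these are the same argument. Your explicit verification of the converse --- using $H_1\cap\dots\cap H_n=\{o\}$ to kill the translation vectors in the homotheties, reducing to an orthogonal splitting by a linear change of coordinates, and computing the cone volume measure of an orthogonal direct sum --- is correct and spells out a step that the paper (which only writes out the forward implication, leaving the converse as a routine computation in the spirit of \cite{BLYZ12}) leaves implicit.
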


According to Chen, Li, Zhu \cite{CSL}, for general convex bodies, no analogue of 
Theorem~\ref{Minkowski-symmetry-uniqueness} can be expected, for example, $V_K=V_L$ may hold for two
non-homothetic convex bodies $K$ and $L$ with smooth boundary.

We note that for any convex body $K$, its centroid
$\frac{1}{V(K)}\int_Kx\,dx$ is invariant under any affine transformation which leaves $K$ invariant.
Therefore,  
Theorem~1.1 in B\"or\"oczky, Henk \cite{BoH16} and 
Theorem 1.4 in Bianchi, B\"or\"oczky, Colesanti, Yang \cite{BBCY} 
yield that the subspace concentration condition
characterizes the cone volume measures of convex bodies with high symmetry. 

\begin{theo}
\label{VKsymchar}
Let $G\subset O(n)$ be a group acting on $S^{n-1}$ without fixed points, and let
$\mu$ be a finite non-trivial Borel measure on $S^{n-1}$ invariant under $G$.
Then there exists a $G$ invariant solution
of the logarithmic Minkowski equation \eqref{MongeVK} in the Alexandrov sense
 if and only if
\begin{description}
\item[(i)] $\mu(L\cap S^{n-1})\leq \frac{{\rm dim}\,L}{n}\cdot\mu(S^{n-1})$ for any proper linear subspace $L$;
\item[(ii)] $\mu(L\cap S^{n-1})=\frac{{\rm dim}\,L}{n}\cdot\mu(S^{n-1})$ in (i)
 is equivalent with
${\rm supp}\,\mu\subset L\cup L^\bot$.
\end{description}
\end{theo}

Concerning the organization of the paper, after some preparation, we prove our main result Theorem~\ref{logBMsymmetry} in Section~\ref{secLBM}. Out of the consequences of Theorem~\ref{logBMsymmetry},
Section~\ref{secGaussian} discusses Theorem~\ref{Gaussian-BMsymmetry},  and its more
general version Theorem~\ref{Logconv-BMsymmetry}. In addition, Theorem~\ref{logMsymmetry}
and Theorem~\ref{Minkowski-symmetry-uniqueness} are verified in Section~\ref{secuniqueness}.
Finally, we discuss the proof of Theorem~\ref{B.F.M.S.} and the small correction in the argument characterizing the equality case in the Appendix Section~\ref{secAppendix}.

\section{Some properties of convex compact sets and the $L_0$-sum}

In this section, we collect some known facts about convex bodies. For some of these statements we have not found a reference, and in these cases, we provide the short arguments.

For notions in convexity, see Schneider \cite{book4}.
In the Euclidean  $n$-space $\R^n$, we denote the standard inner product by $\langle\cdot,\cdot\rangle$,  
the Euclidean norm by $|\cdot|$,  the volume (Lebesgue measure) by $V(\cdot)$, and  the $k$-dimensional Hausdorff measure
by $\mathcal{H}^k$. We denote the Euclidean unit ball centered at the origin and sphere in $\R^n$ by $B^n_2$ and 
$S^{n-1}=\partial B^n_2$, respectively.  For a convex subset $K$ of $\R^n$,
we write  $\partial K$ and  ${\rm relint}K$ to denote the relative boundary and the relative interior with respect to the affine hull of $K$, respectively, and
${\rm int}K$ the interior of $K$ with respect to $\R^n$. A convex body is a compact convex subset with non-empty interior.
We write $M|E$ to denote the orthogonal projection of a compact convex set $M$ into a linear subspace $E$ in $\R^n$.

The \textit{support function} $h_K:\R^n\to \R$ of a compact convex set $K$ in $\R^n$ is defined, for $x\in\R^n$, by
\begin{align*}
h_K(x)={\rm max}\{\langle x,y\rangle:y\in K\}.
\end{align*}
Note that support functions are positively homogeneous of degree one and subadditive. 
A vector $u\in\R^n\backslash\{o\}$ is an exterior normal vector at a  boundary point $x\in\partial K$ if
\begin{align*}
\langle x,u\rangle =h_K(u),
\end{align*} 
and it is called a unit exterior normal if $u\in S^{n-1}$.
A point $x\in\partial K$ is called \textit{smooth boundary point} if there exists a unique exterior unit  normal. We denote by $\partial' K$ the set of all smooth boundary points. It is well known that the set of all non-smooth boundary points of a convex body has 
$\mathcal{H}^{n-1}$-measure equal to $0$. 
The \textit{spherical image map} 
\begin{align*}
\nu_K:\partial'K\to S^{n-1}
\end{align*} 
sends every smooth boundary point to its unique outer unit normal. 

We recall that for $\lambda\in(0,1)$, the $L_0$ sum of two convex bodies $K$ and $L$ in $\R^n$ with $o\in{\rm int}\,K$ and $o\in{\rm int}\,L$
is the Wulff shape
\begin{eqnarray*}
(1-\lambda)\cdot K+_0\lambda \cdot L&=&\left\{x\in\R^n:\,\langle x,u\rangle\leq h_K(u)^{1-\lambda}h_L(u)^\lambda
\mbox{ }\forall u\in S^{n-1} \right\}\\
&=&\left\{x\in\R^n:\,\langle x,u\rangle\leq h_K(u)^{1-\lambda}h_L(u)^\lambda
\mbox{ }\forall u\in \R^n \right\}.
\end{eqnarray*}
We claim that for any $u\in\R^n\backslash \{o\}$, 
\begin{equation}
\label{L0sumsmooth}
\begin{array}{c}
\mbox{if $u$ is an exterior normal at a $z\in\partial' ((1-\lambda)\cdot K+_0\lambda \cdot L)$, then }\\
 h_{(1-\lambda)\cdot K+_0\lambda\cdot L}(u)=h_K(u)^{1-\lambda}h_L(u)^\lambda.
\end{array}
\end{equation}
We may assume that $u\in S^{n-1}$. Since $z\in\partial((1-\lambda)\cdot K+_0\lambda \cdot L)$ is a boundary point and $h_K$ and $h_L$ are
continuous, there exists some
$v\in S^{n-1}$ such that $h_{(1-\lambda)\cdot K+_0\lambda\cdot L}(v)=\langle z,v\rangle=h_K(v)^{1-\lambda}h_L(v)^\lambda$. However, $z$ is a smooth boundary point where there exists only a unique exterior unit normal; therefore, we have $u=v$, verifying \eqref{L0sumsmooth}.

We note that the logarithmic sum is linear covariant; namely, if $\Phi\in {\rm GL}(n,\R)$, then
\begin{align}
\label{linearinv}
\Phi[(1-\lambda)\cdot K +_0 \lambda\cdot L]=(1-\lambda)\cdot \Phi(K) +_0 \lambda\cdot \Phi(L).
\end{align}
This is based on the fact that $h_{\Phi K}(u)=h_K(\Phi^tu)$. Therefore, if $K$ and $L$ are two convex bodies in $\R^n$ invariant under some subgroup $G\subset {\rm GL}(n)$, then $(1-\lambda)\cdot K+_0\lambda\cdot L$ is
also invariant under $G$.

As the equality case of Theorem~\ref{logBMsymmetry}  indicates, we need a better understanding of convex bodies
that are sums convex compact sets in complementary linear subspaces.

\begin{lemma}[Folklore]
\label{SKsumofmany}
Let $K$ be a convex body in $\R^n$, and let $\xi_1,\ldots,\xi_m$, $m\geq 2$  be non-trivial complementary linear subspaces which together span $\R^n$.
Then $\nu_K( \partial' K)\subset \xi_1\cup\ldots\cup \xi_m$ if and only if there exist compact convex sets 
$K_1,\ldots,K_m$ with ${\rm lin}(K_i-K_i)=(\sum_{j\neq i}\xi_j)^\bot$ (and hence
${\rm dim}\,K_i={\rm dim}\,\xi_i$)  for $i=1,\ldots,m$ such that $K=K_1+\ldots+ K_m$.
\end{lemma}
\noindent{\bf Remark } If $K$ is unconditional and $K_1,\ldots,K_m$ are unconditional, then $K_i\subset \xi_i$,
$i=1,\ldots,m$.
\begin{proof}
We may assume that $o\in K$, and hence also that $o\in K_i$ for $i=1,\ldots,m$ if suitable $K_1,\ldots,K_m$ exists.

If $K=K_1+\ldots+ K_m$ for some  compact convex $K_i\subset (\sum_{j\neq i}\xi_j)^\bot$, $i=1,\ldots,m$, then
$$
\partial'K=\bigcup_{i=1}^m \left(\partial' K_i+\sum_{j\neq i}{\rm relint}\,K_j\right),
$$
which in turn yields that $\nu_K( \partial' K)\subset \xi_1\cup\ldots\cup \xi_m$ by the property
$$
\xi_i^\bot={\rm lin}\sum_{j\neq i}{\rm relint}\,K_j
$$
for $i=1,\ldots,m$ (here $\partial' K_i$ is the family of smooth points of the relative boundary of $K_i$).

On the other hand, let us assume that $\nu_K( \partial' K)\subset \xi_1\cup\ldots\cup \xi_m$, and let
$V_i=(\sum_{j\neq i}\xi_j)^\bot$.
For any $i=1,\ldots,m$, let us consider the convex compact set
$$
K_i=\left\{x\in V_i:\,\langle u,x\rangle\leq h_K(u) \mbox{ for all }u\in \xi_i\cap\nu_K( \partial' K) \right\}.
$$
As $V_i^\bot+\xi_i=\R^n$ and $V_i^\bot\cap\xi_i=\{o\}$ for $i=1,\ldots,m$, we deduce that $K_i$ is a 
${\rm dim}V_i={\rm dim}\xi_i$ dimensional compact convex set.
Since $K$ is the intersection of the supporting halfspaces at the smooth boundary points according to
Theorem~2.2.6 in Schneider \cite{book4}, the condition
$\nu_K( \partial' K)\subset \xi_1\cup\ldots\cup \xi_m$ implies
$$
K=\bigcap_{i=1}^m\left\{x\in\R^n:\langle u,x\rangle\leq h_K(u)\,\forall u\in\nu_K( \partial' K)\cap \xi_i\right\}=
\bigcap_{i=1}^m\left(K_i+\xi_i^\bot\right)=K_1+\ldots+K_m.
$$
\end{proof}

Next we show that equality really holds in Theorem~\ref{logBMsymmetry} when promised
(even without symmetry assumption).

\begin{lemma}[Folklore]
\label{sumlowerlogBMequa}
If $\lambda\in(0,1)$, $K$ and $L$ are convex bodies with $o\in{\rm int}K$ and $o\in{\rm int}L$,
and $K=K_1+\cdots+ K_m$ and $L=L_1+\cdots + L_m$ for 
$m\geq 1$ and compact convex sets $K_i,L_i$, $i=1,\cdots,m$, 
having dimension at least one and satisfying $o\in K_i$, $K_i=\theta_iL_i$ for $\theta_i>0$ for 
$i=1,\cdots,m$, and  $\sum_{i=1}^m{\rm dim}K_i=n$, then
\begin{enumerate}[(i)]
\item $(1-\lambda)\cdot K+_0\lambda\cdot L=\theta_1^\lambda K_1+\cdots+ \theta_n^\lambda K_m$;
\item $V((1-\lambda)\cdot K+_0\lambda\cdot L)=V(K)^{1-\lambda}V(L)^\lambda$.
\end{enumerate}
\end{lemma}
\begin{proof}
For $i=1,\ldots,m$, we write $V_i={\rm lin}K_i$, and $\xi_i=\left(\sum_{j\neq i} V_j\right)^\bot$.
We observe that if $u\in\xi_i\cap S^{n-1}$, then 
$$
h_K(u)=h_{K_i}(u)\mbox{ \ and \ }h_L(u)=\theta_ih_{K_i}(u).
$$
It follows from Lemma~\ref{SKsumofmany} that
\begin{eqnarray*}
K&=&\bigcap_{i=1}^m\left\{x\in\R^n:\,\langle u,x\rangle\leq h_K(u)\;\forall u\in\xi_i\cap S^{n-1}\right\}\\
L&=&\bigcap_{i=1}^m\left\{x\in\R^n:\,\langle u,x\rangle\leq \theta_ih_K(u)\;\forall u\in\xi_i\cap S^{n-1}\right\};
\end{eqnarray*}
therefore, $h_K(u)^{1-\lambda}\Big(\theta_ih_K(u)\Big)^\lambda=\theta_i^\lambda h_K(u)$ 
for $u\in\xi_i\cap S^{n-1}$ and $i=1,\ldots,m$ yields that
$$
(1-\lambda)\cdot K+_0\lambda\cdot L\subset 
\bigcap_{i=1}^m\left\{x\in\R^n:\,\langle u,x\rangle\leq \theta_i^\lambda h_K(u)\;\forall u\in\xi_i\cap S^{n-1}\right\}
=\sum_{i=1}^m\theta_i^\lambda K_i.
$$
To prove $\sum_{i=1}^m\theta_i^\lambda K_i\subset (1-\lambda)\cdot K+_0\lambda\cdot L$, it is enough to verify
\begin{equation}
\label{suminL0sum}
\sum_{i=1}^m\theta_i^\lambda h_{K_i}(u)\leq
 h_K(u)^{1-\lambda}h_L(u)^\lambda=\left(\sum_{i=1}^m h_{K_i}(u)\right)^{1-\lambda}
\left(\sum_{i=1}^m\theta_i h_{K_i}(u)\right)^{\lambda}
\end{equation}
for any $u\in S^{n-1}$. However, \eqref{suminL0sum} is a direct consequence of the H\"older inequality, completing the proof of (i).

We observe that setting $d_i={\rm dim}\,K_i$ for $i=1,\ldots,m$, we have
 $V(L)=\left(\prod_{i=1}^m\theta_i^{d_i}\right)V(K)$, and (i) yields that
$$
V((1-\lambda)\cdot K+_0\lambda\cdot L)=\left(\prod_{i=1}^m\theta_i^{d_i}\right)^\lambda V(K),
$$
verifying (ii).
\end{proof}

\section{Simplicial cones and Representation of Coxeter groups} 

We say that a convex subset $C\subset\R^n$ is a convex cone if $\lambda\,x\in C$ for any
$x\in C$ and $\lambda\geq 0$. The positive dual cone of $C$ is
$$
C^*=\{x\in\R^n:\,\langle x,y\rangle\geq 0 \mbox{ for each }y\in C\}.
$$
For any $n$ independent vectors $u_1,\cdots,u_n\in \R^n$, the convex cone $C$ generated by their positive hull
\begin{align}\label{simplisial convex cone form1}
C={\rm pos}\{u_1,\cdots,u_n\}=\left\{\sum_{i=1}^n\lambda_iu_i:\, \forall\; \lambda_i\geq 0\right\}
\end{align}
is called \textit{simplicial convex cone}. In this case, the positive dual cone is 
$$
C^*={\rm pos}\{u_1^*,\ldots,u_n^*\}
$$
where $\langle u_i,u_j^*\rangle=0$ if $i\neq j$ and $\langle u_i,u_i^*\rangle>0$. For $i=1,\cdots,n$, the \textit{facets} of $C$ are  
\begin{align*}
F_i={\rm pos}\{\{u_1,\cdots,u_n\}\setminus\{u_i\}\}=C\cap (u_i^*)^\bot,
\end{align*}
and the \textit{walls} of $C$ are the linear subspaces
\begin{align*}
W_i={\rm lin}\{\{u_1,\cdots,u_n\}\setminus\{u_i\}\}.
\end{align*}
Note that the orthogonal reflection ${\rm Ref}_{W_i}$ through the wall $W_i$ of $C$ is the map
$x\mapsto x-2\langle x,u_i^*\rangle u_i^*$.
We observe that $-u_i^*$  is an exterior normal to $F_i$, and 
\begin{align}\label{simplisial convex cone form2}
C=\{x\in\R^n: \langle x,u_i^*\rangle\geq 0 \text{ for }i=1,\cdots,n \}.
\end{align}

A linear subspace $E$ of $\R^n$ is called non-trivial if ${\rm dim}\,E\geq 1$. In this case, we write
$\mathcal{O}(E)$ to denote the group of orthogonal transformations of $E$ where $O(n)=\mathcal{O}(\R^n)$.
If $G$ is a group generated by reflections through $n$ independent linear hyperplanes $H_1,\ldots,H_n$ 
($n$ hyperplanes $H_1,\ldots,H_n$ with $H_1\cap\ldots\cap H_n=\{o\}$), and $H_i=v_i^\bot$ for $v_i\in \R^n\backslash\{o\}$ and 
$i=1,\ldots,n$, then any non-trivial $G$ invariant linear subspace $E$ is of the form 
\begin{equation}
E={\rm lin}I\mbox{ for non-empty }I\subset\{v_1,\ldots,v_n\}\mbox{ where $\langle v_i,v_j\rangle=0$
if $v_i\in I$ and $v_j\not\in I$.}
\end{equation}
We call an invariant linear subspace irreducible with respect to the action of $G$ if it has no proper $G$-invariant linear subspace.
It follows that there exist only finitely many irreducible subspaces  
$E_1,\ldots,E_k$, $k\geq 1$,
satisfying that 
\begin{itemize}
\item $\R^n=\oplus_{i=1}^kE_i$; 
\item $E_i$ and $E_j$ are orthogonal for $i\neq j$;
\item $G=G_1\times \ldots \times G_k$ where $G_i\subset O(E_i)$ acts irreducibly on $E_i$.  
\end{itemize}
This decomposition corresponds to the irreducible representations coming from the action of the closure of $G$ in $O(n)$, see
Humphreys \cite{Hum78} for representations of compact groups.

Typical example for a finite group $G\subset O(n)$ generated by reflections through $n$ independent hyperplanes and acting irreducibly on $\R^n$ is the symmetry group of a regular polytope $P$ in $\R^n$ whose centroid is the origin
(see McCammond \cite{McC} or Humphreys \cite{Hum90}). For example, if $P$ is a regular simplex, then the $n$ independent hyperplanes might be the perpendicular bisectors of the $n$ edges meeting at a fixed vertex of $P$.

The following Lemma~\ref{prop.for.chambers} defines the Weyl chamber associated to an irreducible action of a finite Coxeter group, and dicusses the fundamental properties. These Weyl chambers partition $\R^n$ into simplicial cones (see Lemma~\ref{prop.for.chambers} (ii) and (iii)).

\begin{lemma}[Coxeter]
\label{prop.for.chambers}
Let $G$ be a finite group generated by reflections through $n$ hyperplanes $H_1,\ldots,H_n$ with $H_1\cap\ldots,\cap H_n=\{o\}$
and acting irreducibly on $\R^n$.
Then there exists a simplicial cone $C={\rm pos}\{u_1,\cdots,u_n\}$ (called a Weyl chamber) such that
	\begin{enumerate}[(i)]
		\item \label{csjjbuicjksdc} the $n$ reflections through the walls of $C$ generate $G$;	
\item $\R^n=\cup_{g\in G} gC$;
\item if $ gC  \cap  {\rm int}C\neq \emptyset$ for some $g\in G$, then $g$ is the identity;
\item  $\langle x,y \rangle \geq0$ for $x,y\in C$ and writing $C^*={\rm pos}\{v_1,\ldots,v_n\}$, we have
$\langle v_i,v_j\rangle\leq 0$ provided $i\neq j$;
		\label{aytosigouraden}
\item for any partition  $\{1,\ldots,n\}=I\cup J$ with $I,J\neq\emptyset$ and $I\cap J=\emptyset$, there exist
$i\in I$ and $j\in J$ such that $\langle v_i,v_j\rangle<0$.
	\end{enumerate}
\end{lemma}

\begin{proof} 
According to the classical theory (see Humphreys \cite{Hum90}), one associates a so called root system to $G$; namely, a finite set $\Phi$ of non-zero vectors such that any  two are either independent or opposite, and the set of reflections in $G$ coincides with the set reflections through the linear $(n-1)$-dimensional subspaces orthogonal to the elements of $\Phi$. 
It is a well-known result (see Humphreys \cite{Hum90}) that there exists
some $n$ independent roots
 $v_1,\ldots,v_n\in \Phi$ such that any other root can be written as a linear combination of  $v_1,\ldots,v_n$ with all non-positive or all non-negative coefficients. Then $v_1,\ldots,v_n\in \Phi$ are called simple roots, and the simplicial cone 
$C=\{x\in\R^n:\,\langle x,v_i\rangle \geq 0\mbox{ for }i=1,\ldots,n\}$ satisfies  
(i), (ii), (iii); moreover, $v_1,\ldots,v_n$ satisfy that $\langle v_i,v_j\rangle\leq 0$ for $i\neq j$
(see Humphreys \cite{Hum90}), verifying the second half of (iv).

We complete the proof of (iv) by contradiction, so we suppose that there exist 
$x,y\in C$ satisfying $\langle x,y\rangle< 0$, and seek a contradiction.
We set $v_{n+1}=-x$ and $v_{n+2}=-y$; therefore, 
$\langle v_i,v_j\rangle\leq 0$ for $i,j=1,\ldots,n+2$ and
$\langle v_{n+1},v_{n+2}\rangle< 0$. According to Radon's theorem, there exist non-empty 
$A,B\subset \{1,\ldots,n+2\}$ with $A\cap B=\emptyset$, and $\alpha_i>0$ and $\beta_j>0$
for $i\in A$ and $j\in B$ such that
$$
\sum_{i\in A}\alpha_iv_i=\sum_{j\in B}\beta_jv_j=w.
$$
We deduce that
$$
0\leq \langle w,w\rangle=\sum_{i\in A}\sum_{j\in B}\alpha_i\beta_j\langle v_i,v_j\rangle,
$$
thus $\langle v_i,v_j\rangle\leq 0$ for $i\neq j$ yields that 
\begin{equation}
\label{viAvjB}
\mbox{$\langle v_i,v_j\rangle=0$ for $i\in A$ and $j\in B$,}
\end{equation}
and hence $w=o$. 
In turn, 
the independence of $v_1,\ldots,v_n$ shows that $A\cap\{v_{n+1},v_{n+2}\}\neq \emptyset$
and $B\cap\{v_{n+1},v_{n+2}\}\neq \emptyset$, which facts contradict
$\langle v_{n+1},v_{n+2}\rangle< 0$ by \eqref{viAvjB}.

Finally, we prove (v) again by contradiction. We suppose that there exists
a partition  $\{1,\ldots,n\}=I\cup J$ with $I,J\neq\emptyset$ and $I\cap J=\emptyset$ such that
$\langle v_i,v_j\rangle=0$ for
$i\in I$ and $j\in J$ (note that $\langle v_i,v_j\rangle\geq 0$ by (iv)). Then both ${\rm lin} \{v_i:\,i\in I\}$ and  
${\rm lin}\{v_j:\,j\in J\}$ are invariant under reflections through the walls of $C$, which contradicts the irreducibility of the action of $G$ on $\R^n$. 
\end{proof}

The main goal of this section is to prove the following statement which describes how the Weyl chambers essentially partitioning 
$\R^n$ 
 (see Proposition~\ref{basikothe} (iii)) are related to the group action.

\begin{prop}
\label{basikothe}	
	Let $G\subset O(n)$ be the closure of a 
group generated by 	the orthogonal reflections  through the hyperplanes
$H_1,\ldots,H_n$ of $\R^n$ with $H_1\cap\ldots\cap H_n=\{o\}$, let $E_1,\ldots,E_k$ be the corresponding irreducible subspaces. Then there exist an $n$-dimensional simplicial convex cone 
$C=\oplus_{\alpha=1}^k C_\alpha$ in $\R^n$ where $C_\alpha\subset E_\alpha$
is a Weyl chamber for the irreducible action of a 
finite subgroup 
$\widetilde{G}_\alpha\subset\mathcal{O}(E_\alpha)$  on $E_\alpha$ and 
$\widetilde{G}_\alpha$ is generated by reflections through the
walls of $C_\alpha$ in $E_\alpha$ for $\alpha=1,\ldots,k$.
In addition, 
\begin{enumerate}
\item $\widetilde{G}=\widetilde{G}_1\times\ldots\times\widetilde{G}_k$ is a subgroup of $G$;

\item  \label{the1} writing $W_1,\ldots,W_n$ to denote the walls of $C$, 
the reflections ${\rm Ref}_{W_\alpha}$, $\alpha=1,\cdots,n$, generate $\widetilde{G}$;

\item  \label{the2} $gC\cap{\rm int}\,C\neq\emptyset$ for $g\in \widetilde{G}$ implies that $g$ is the identity, and
		\begin{align*}
		\R^n=\bigcup_{g\in \widetilde{G}}gC;
		\end{align*}
\item if $C^*={\rm pos}\{v_1,\ldots,v_n\}$, then 
$\langle v_i,v_j\rangle\leq 0$ provided $i\neq j$;
\item If $K$ is a convex body in $\R^n$ invariant under $G$,
then $\nu_K(x)\in C$ for $x\in \partial'K\cap C$, and if moreover $\Phi\in GL(n)$ satisfies $\Phi( C)=\R^n_+$,
then the unconditional set  $\bar{K}$  defined by 
$\bar{K}\cap\R^n_+=\Phi(K\cap C)$ is an unconditional convex body\label{the3}.
	\end{enumerate} 
\end{prop}

We prepare the proof of Proposition~\ref{basikothe} with a series of lemmas mostly discussing well-known statements.

The following statement is Lemma~19 in Barthe, Fradelizi \cite{BaF13}.

\begin{lemma}[Barthe, Fradelizi]
\label{infinite-group}
If $G\subset O(n)$ is an infinite subgroup generated by reflections through $n$ hyperplanes $H_1,\ldots,H_n$ with $H_1\cap\ldots,H_n=\{o\}$, and  $G$ acts irreducibly on $\R^n$, then the closure of $G$ is $O(n)$. 
\end{lemma}

\begin{lemma}\label{Propert.H,C}
For $k\geq 2$, let $E_\alpha$, $\alpha=1,\ldots,k$ be pairwise orthogonal non-trivial linear subspaces of $\R^n$
with $\oplus_{\alpha=1}^kE_\alpha=\R^n$, and for $\alpha=1,\ldots,k$, let $G_\alpha\subset \mathcal{O}(E_\alpha)$ be a finite subgroup generated by 
reflections through ${\rm dim} E_\alpha$ independent hyperplanes of $E_\alpha$, and let $C_\alpha$ be a Weyl chamber for the action 
of $G_\alpha$. Then for the subgroup $G=G_1\times\ldots\times G_k$ of $O(n)$ and $C=\oplus_{\alpha=1}^kC_\alpha$,
we have
\begin{enumerate}[(i)]
\item $G$ is generated by the reflections through the walls of $C$;
\item $\cup \{gC:g\in G\}=\R^n$;
		\item if ${\rm int} gC \cap {\rm int} C\neq \emptyset$ for a $g\in G$, then $g$ is the identity; 
		\item $\langle x,y \rangle \ge 0$ for $x,y\in C$.
	\end{enumerate}
\end{lemma}
\begin{proof}
	\begin{enumerate}[(i)]
		\item  As $G=G_1\times\ldots\times G_k$ and $E_1,\ldots,E_k$ are pairwise orthogonal, 
Lemma~\ref{prop.for.chambers} (i) yields that
a set generators of $G$ is the $n$ reflections through the hyperplanes of 
$\R^n$ of the form $W+E_\alpha^\bot$ where for some $E_\alpha$, $\alpha=1,\ldots,k$, $W$ is a wall of $C_\alpha$ in 
$E_\alpha$. Since these $n$ hyperplanes of 
$\R^n$ are exactly the walls of $C$, we deduce (i).

\item Write $x\in \R^n$ as $x=x_1+\dots
		+x_k$ where $x_{\alpha}\in E_{\alpha}$, $\alpha=1,\ldots,k$. 
According to Lemma~\ref{prop.for.chambers} (ii), there exits $g_{\alpha}\in G_\alpha$ such that 
$x_{\alpha} \in g_{\alpha} C_{\alpha}$ for each $\alpha=1,\ldots,k$. 
Therefore $x\in gC$ for $g=(g_1,\dots,g_k)\in G$.
		\item Assume ${\rm int} gC \cap {\rm int} C\neq \emptyset$ for $g=(g_1,\cdots,g_k)\in G$. 
Projecting into each $E_\alpha$ shows that the relative interiors of $g_\alpha C_\alpha$ and $C_\alpha$ intersect for 
$\alpha=1,\ldots,k$; therefore,  $g_\alpha C_\alpha=C_\alpha$ for $\alpha=1,\ldots,k$ by 
Lemma~\ref{prop.for.chambers} (iii),
and hence  $gC=C$.
		\item This follows from Lemma~\ref{prop.for.chambers} \eqref{aytosigouraden} and the fact that 
the subspaces $E_1,\ldots,E_k$ are pairwise orthogonal.
	\end{enumerate}
\end{proof}

\begin{lemma}
\label{autokainden}
If $K$ is a convex body in $\R^n$, and there is  a simplicial convex cone $C$ such that $K$ is invariant with respect to the orthogonal reflections through the walls of $C$, then 
\begin{description}
\item{(i)} $\nu_K(z)\in C$ holds for any $z\in \partial'K\cap C$;
\item{(ii)} $\begin{array}{rcl}
K\cap C&=&\left\{x\in C:\langle x,\nu_K(z)\rangle \leq h_{K}(\nu_K(z))\mbox{ }\forall\,z\in\partial'K\cap C\right\}\\
&=&
\left\{x\in C:\langle x,u\rangle \leq h_{K}(u)\mbox{ }\forall\,u\in C\right\}.
\end{array}$
\end{description}
\end{lemma}
\begin{proof}
As in \eqref{simplisial convex cone form1} and \eqref{simplisial convex cone form2},
we write the cone $C$ as $C={\rm pos}\{u_1,\ldots,u_n\}$
and $C=\{z\in\R^n:\,\langle z,x_j\rangle\leq 0,\;j=1,\ldots,n\}$ for independent  
$u_1,\ldots,u_n\in S^{n-1}$ and $x_1,\ldots,x_n\in S^{n-1}$ satisfying
$\langle x_j,u_i\rangle =0$ for $j\neq i$ and $\langle x_j,u_j\rangle <0$ for $j=1,\ldots,n$.

For $z\in\partial'K\cap C$ and $j\in\{1,\cdots,n\}$,
 we show that 
$$
\langle \nu_K(z),x_j\rangle\leq0. 
$$
We use that $K$ is symmetric with respect to the wall $W_j:={\rm lin}\{u_1,\cdots,u_n\}\setminus\{u_j\}=x_j^\bot$  of $C$; or in other words, ${\rm Ref}_{W_j}K= K$.

If $z\in x_j^\bot$, then the symmetry of $K$ through $W_j$ shows that both $\nu_K(z)$ and ${\rm Ref}_{W_j}(\nu_K(z))$
are exterior normals at $z$, and hence $\nu_K(z)\in x_j^\bot$.

Therefore, let $\langle z,x_j\rangle<0$. As $\nu_K(z)$ is an exterior normal at $z$ and ${\rm Ref}_{W_j}z\in K$, we deduce that
$\langle \nu_K(z),({\rm Ref}_{W_j}z)-z\rangle\leq0$. However, $({\rm Ref}_{W_j}z)-z$ is a positive multiple of $x_j$,
thus $\langle \nu_K(z),x_j\rangle\leq 0$, which implies $\nu_K(z)\in C$ since $j$ was arbitrary. 

Since $K$ is the intersection of the supporting halfspaces at the smooth boundary points according to
Theorem~2.2.6 in Schneider \cite{book4}, (i) yields (ii).
	\end{proof}

\begin{lemma}
\label{unco.convex}
	Let $K$ be a convex body in $\R^n$  and let $C$ be a simplicial convex cone  such that $\langle x,y \rangle \geq 0$ for every $x,y\in C$ and $K$ is invariant with respect to the orthogonal reflections through the walls of $C$. If $\Phi\in GL(n)$ satisfies $\Phi C=\R^n_+$, then 
$\Phi^{-t}C\subset \R^n_+$ and
the unconditional set $\bar{K}$ defined by  $\bar{K}\cap\R^n_+=\Phi(K\cap C)$ is an unconditional convex body.
\end{lemma}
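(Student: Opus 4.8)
The plan is to realise $\bar K$ as an explicit intersection of convex sets in the ``folded'' coordinates $y\mapsto\sum_i|y_i|u_i$, using Lemma~\ref{autokainden} to show that only the halfspaces whose outer normal lies in $C^{*}$ are needed. First I would normalize: writing $C={\rm pos}\{u_1,\dots,u_n\}$ with walls $W_j={\rm lin}(\{u_1,\dots,u_n\}\setminus\{u_j\})$, the map $\Phi$ carries the extreme rays of $C$ onto those of $\R^n_+$, so after permuting the coordinates and rescaling the $u_i$ (which alters neither the hypotheses nor the conclusion, nor $C$ and its walls) I may assume $\Phi u_i=e_i$. Then the definition of $\bar K$ unwinds to $\bar K=\{y\in\R^n:\ |y_1|u_1+\dots+|y_n|u_n\in K\}$, and since $\sum_i|y_i|u_i$ always lies in $C$ this also equals $\{y:\sum_i|y_i|u_i\in K\cap C\}$. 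I would also note that $o$ is the centroid of $K$ --- it is fixed by the group generated by the ${\rm Ref}_{W_j}$, whose only common fixed point is $\bigcap_jW_j=\{o\}$ since the $u_i$ form a basis --- so $o\in{\rm int}\,K$, hence $h_K(w)>0$ for $w\neq o$ and $0\in{\rm int}\,\bar K$.

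The heart of the argument is the claim: \emph{if $z\in C$ satisfies $\langle z,w\rangle\le h_K(w)$ for every $w\in C^{*}$, then $z\in K$}. Granting it, convexity follows at once: for $w\in C^{*}$ all coefficients $\langle u_i,w\rangle$ are nonnegative, so $y\mapsto\sum_i|y_i|\langle u_i,w\rangle$ is convex and $H_w:=\{y:\sum_i|y_i|\langle u_i,w\rangle\le h_K(w)\}$ is convex; moreover $y\in\bar K$ forces $\sum_i|y_i|u_i\in K$ and hence $y\in H_w$, while $y\in\bigcap_{w\in C^{*}}H_w$ makes $z:=\sum_i|y_i|u_i\in C$ satisfy the hypothesis of the claim, so $z\in K$ and $y\in\bar K$. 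Thus $\bar K=\bigcap_{w\in C^{*}}H_w$ is convex; it is closed and bounded (testing $w=u_k^{*}\in C^{*}$ bounds $|y_k|\langle u_k,u_k^{*}\rangle$ by $h_K(u_k^{*})$), contains $0$ in its interior, and is unconditional by construction, so it is an unconditional convex body.

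To prove the claim I would argue by contradiction. If $z\in C\setminus K$, then since $o\in{\rm int}\,K$ the segment $[o,z]\subseteq C$ leaves $K$ at some $z^{*}=\mu z$ with $\mu\in(0,1)$, and $z^{*}\in\partial K\cap C$. Take an outer unit normal $w$ of $K$ at $z^{*}$. Here Lemma~\ref{autokainden} enters: its proof only uses that $w$ is a support normal of $K$ at a boundary point lying in $C$, not that the normal is unique, so $w\in C$; by acuteness of $C$ we then have $w\in C^{*}$ and $\langle z,w\rangle\ge0$ (as $z,w\in C$). Since $h_K(w)=\langle z^{*},w\rangle=\mu\langle z,w\rangle$ and $h_K(w)>0$, we get $\langle z,w\rangle>0$, and then the hypothesis $\langle z,w\rangle\le h_K(w)=\mu\langle z,w\rangle$ forces $\mu\ge1$, a contradiction.

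I expect the only genuine obstacle to be the claim, and within it the point of producing, at the exit point $z^{*}$, a supporting hyperplane of $K$ whose normal lies in $C^{*}$: a bare separation argument yields merely some separating direction, whereas Lemma~\ref{autokainden} (together with the acuteness inclusion $C\subseteq C^{*}$) pins it down inside $C$. A minor technical point is that $z^{*}$ need not be a smooth point of $\partial K$, which is handled by the above observation that the argument of Lemma~\ref{autokainden} applies to an arbitrary support normal.
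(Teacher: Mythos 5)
Your overall strategy -- realizing $\bar K$ as the intersection $\bigcap_{w\in C^{*}}H_w$ of explicitly convex sets, reducing everything to the claim that $z\in C$ with $\langle z,w\rangle\le h_K(w)$ for all $w\in C^{*}$ lies in $K$, and proving the claim by following the ray $[o,z]$ to its exit point $z^{*}=\mu z\in\partial K\cap C$ -- is a legitimate alternative to the paper's route (the paper instead shows $\nu_{\Phi K}(z)\in\R^n_+$ at smooth boundary points $z\in\partial'\Phi K\cap\R^n_+$ and deduces convexity from a ``coordinate box'' inclusion). Both routes hinge on Lemma~\ref{autokainden}, and that is exactly where your argument has a genuine gap.

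You assert that the proof of Lemma~\ref{autokainden} ``only uses that $w$ is a support normal of $K$ at a boundary point lying in $C$, not that the normal is unique, so $w\in C$,'' and repeat this in your closing paragraph (``the argument of Lemma~\ref{autokainden} applies to an arbitrary support normal''). This is not correct. In that proof, after writing $z^{*}=\sum_i\lambda_i u_i$ one obtains $\lambda_j\langle u_j,{\rm Ref}_{W_j}(w)-w\rangle\le 0$; to conclude $\langle u_j,{\rm Ref}_{W_j}(w)-w\rangle\le0$ (hence $\langle w,x_j\rangle\le0$) one must have $\lambda_j>0$. When $z^{*}$ lies on a wall $W_j$ (so $\lambda_j=0$) the inequality is vacuous, and the conclusion $w\in C$ can actually fail for a non-smooth $z^{*}$. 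Concretely, in $\R^2$ take $K$ the regular hexagon with a vertex at $(1,0)$, which is unconditional, and $C=\R^2_+$ (so $C^{*}=C$). At $z^{*}=(1,0)\in\partial K\cap C$ the normal cone is ${\rm pos}\{(\sqrt3/2,1/2),\ (\sqrt3/2,-1/2)\}$, which contains normals with negative second coordinate, i.e.\ outside $C=C^{*}$. Thus ``take an outer unit normal $w$ of $K$ at $z^{*}$'' does not by itself give you $w\in C^{*}$, and the contradiction $\mu\ge1$ does not follow for an ill-chosen $w$. In the paper, Lemma~\ref{autokainden} is only ever invoked at \emph{smooth} boundary points (where the $\lambda_j=0$ case is automatic because ${\rm Ref}_{W_j}(w)=w$), so this issue does not arise there.

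The gap is fixable, but requires an additional argument you do not give: one must show that $N(z^{*})\cap C\ne\{o\}$, so that a suitable $w$ can be \emph{chosen} in $C\subset C^{*}$. This can be done by approximating $z^{*}$ by smooth boundary points of $K$ in $\partial K\cap{\rm int}\,C$ (using that the radial map makes $\partial K\cap C$ homeomorphic to $S^{n-1}\cap C$, that ${\rm int}\,C\cap S^{n-1}$ is dense in $C\cap S^{n-1}$, and that smooth boundary points have full measure), applying Lemma~\ref{autokainden} at those smooth points, and passing to a convergent subsequence of the normals; the limit is an outer normal at $z^{*}$ lying in the closed cone $C$. With that lemma in hand, the rest of your proof -- the intersection formula, convexity, boundedness via $w=u_k^{*}$, the origin in the interior, and unconditionality -- is correct.
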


\begin{proof}
	To show the convexity of $\bar{K}$, we observe that 
$C\subset C^*$ holds for the  positive dual cone $C^*$ by the condition on $C$, and hence
\begin{align}\label{csjdnjsd}
\Phi^{-t}C\subset  \Phi^{-t}C^*=(\Phi C)^*=(\R^n_+)^*=\R^n_+.
\end{align}
Now if $z\in \partial'\Phi(K)\cap\R^n_+$, then $z=\Phi y$ for some  $y\in \partial' K \cap C$
where $\nu_{K}(y)\in C$ according to Lemma~\ref{autokainden}. 
Since $\Phi^{-t}\nu_{K}(y)$ is an exterior normal to $\partial\Phi(K)$ at $z=\Phi y$,
we conclude from 
\eqref{csjdnjsd} and the conditions on $C$ and $K$ that
\begin{align}\label{den exw idea pws na to pw}
z\in \partial'\Phi(K)\cap\R^n_+ \Rightarrow \nu_{\Phi(K)}(z)\in\R^n_+
\end{align}	

As $\bar{K}$ is an unconditional set and $\bar{K}\cap\R^n_+=\Phi(K)\cap\R^n_+$, 
its convexity is equivalent with the following statement: If
$x=(x_1,\ldots,x_n)\in \bar{K}$, $y=(y_1,\ldots,y_n)\in \bar{K}$ and $\lambda\in(0,1)$, then
\begin{align}\label{winbarK}
w=(|(1-\lambda)x_1+\lambda y_1|,\ldots,|(1-\lambda)x_n+\lambda y_n|)\in \Phi(K)\cap\R^n_+.
\end{align}
If $z\in \partial'\Phi(K)\cap\R^n_+$, then for
$\tilde{x}=(|x_1|,\ldots,|x_n|)\in \Phi(K)\cap\R^n_+$ and $\tilde{y}=(|y_1|,\ldots,|y_n|)\in \Phi(K)\cap\R^n_+$, 
we deduce from $\nu_{\Phi(K)}(z)\in\R^n_+$ (see \eqref{den exw idea pws na to pw}) and
$|(1-\lambda)x_i+\lambda y_i|\leq (1-\lambda)|x_i|+\lambda |y_i|$ that
\begin{align*}
\langle w,\nu_{\Phi(K)}(z)\rangle&\leq 
\langle (1-\lambda)\tilde{x}+\lambda \tilde{y},\nu_{\Phi(K)}(z)\rangle=
 (1-\lambda)\langle\tilde{x},\nu_{\Phi(K)}(z)\rangle
+\lambda\langle \tilde{y},\nu_{\Phi(K)}(z)\rangle\\
&\leq (1-\lambda)\langle z,\nu_{\Phi(K)}(z)\rangle
+\lambda\langle z,\nu_{\Phi(K)}(z)\rangle=\langle z,\nu_{\Phi(K)}(z)\rangle.
\end{align*}
Since $\Phi(K)$ is the intersection of the supporting halfspaces at the smooth boundary points (see
Theorem~2.2.6 in Schneider \cite{book4}), we conclude \eqref{winbarK}, and in turn 
Lemma~\ref{unco.convex}.
\end{proof}

Now we are ready to give the proof of Proposition \ref{basikothe}. 

\begin{proof}[Proof of Proposition \ref{basikothe}]
Let $\bar{G}$ be the
group generated by 	the orthogonal reflections  through the hyperplanes
$H_1,\ldots,H_n$ of $\R^n$ with $H_1\cap\ldots\cap H_n=\{o\}$. Then 
 the corresponding irreducible subspaces $E_1,\ldots,E_k$ coincide for $\bar{G}$ and for its closure $G$.
Let $\bar{G}_\alpha,G_\alpha\subset \mathcal{O}(E_\alpha)$, $\alpha=1,\cdots,n$, be the subgroups
such that $\bar{G}=\bar{G}_1\times\ldots\times\bar{G}_k$ and
$G=G_1\times\ldots\times G_k$ where $G_\alpha$ is the closure of $\bar{G}_\alpha$ in  
$\mathcal{O}(E_\alpha)$ for $\alpha=1,\cdots,n$. In particular, $\bar{G}_\alpha$ is generated by reflections
through all $H_i\cap E_\alpha$ such that $E_\alpha\not\subset H_i$ where writing $H_i=w_i^\bot$ for $w_i\in S^{n-1}$
and $i=1,\ldots,n$, we have $E_\alpha={\rm lin}\{w_i:\,E_\alpha\not\subset H_i\}$.

If $\bar{G}_\alpha$ is finite, then we simply define $\widetilde{G}_\alpha=\bar{G}_\alpha=G_\alpha$.
If $\bar{G}_\alpha$ is infinite, then $\bar{G}_\alpha=\mathcal{O}(E_\alpha)$ according to
Lemma~\ref{infinite-group}; therefore, we may choose $\widetilde{G}_\alpha$ to be the symmetry group of a regular simplex of $E_\alpha$ centered at the origin. In particular, for each $\alpha=1,\ldots,k$,
$\widetilde{G}_\alpha$ is finite and acts irreducibly on $E_\alpha$, and
  let $C_\alpha\subset E_\alpha$ be a Weyl chamber for the action of $\widetilde{G}_\alpha$  as in Lemma~\ref{prop.for.chambers}.

We define $\widetilde{G}=\widetilde{G}_1\times\ldots\times\widetilde{G}_k\subset O(n)$ and
$C=\oplus_{\alpha=1}^k C_\alpha$. We deduce Proposition \ref{basikothe} (ii) and (iii) from
 Lemma~\ref{Propert.H,C} (ii) and (iii).

For Proposition \ref{basikothe} (iv), the walls of $C$ are of the form $W+E_\alpha^\bot$
for $\alpha=1,\ldots,k$ and wall $w$ of $C_\alpha$ in $E_\alpha$. For $1\leq i<j\leq n$,
$\langle v_i,v_j\rangle\leq 0$ follows from  Lemma~\ref{prop.for.chambers} (iv)
 if $v_i,v_j\in E_\alpha$,
and from the othogonality of $E_\alpha$ and $E_\beta$
if $v_i \in E_\alpha$ and $v_j\in E_\beta$ for $\alpha\neq\beta$.

For Proposition \ref{basikothe} (v), we deduce from Lemma~\ref{Propert.H,C} that
$\langle x,y\rangle\geq 0$ holds for $x,y\in C$. Combining this with
Proposition \ref{basikothe} (ii), Lemma~\ref{autokainden} (i) and Lemma~\ref{unco.convex}
yields Proposition \ref{basikothe} (v). 
\end{proof}

\section{Log-Brunn-Minkowski inequality for convex bodies with symmetries}
\label{secLBM}

First we consider the version of Theorem~\ref{logBMsymmetry} where each linear reflection is an orthogonal reflection.

\begin{theo}
\label{logBMwithsymmetries}
	Let  $\lambda\in(0,1)$. If $A_1,\ldots,A_n$ are orthogonal reflections through the
hyperplanes $H_1,\ldots,H_n$ such that $H_1\cap\ldots\cap H_n=\{o\}$, and the convex bodies $K$ and $L$ are invariant under 
	$A_1,\ldots,A_n$, then
	\begin{align}\label{log.B.M.inequality}
	V((1-\lambda)\cdot K +_0 \lambda\cdot L)\geq V(K)^{1-\lambda}V(L)^{\lambda}.
	\end{align}
	In addition, equality holds if and only if 
 $K=K_1\oplus\cdots\oplus K_m$ and $L=L_1\oplus\cdots\oplus L_m$ for 
$m\geq 1$ and compact convex sets $K_i,L_i$  invariant under $A_i$, $i=1,\cdots,m$, 
having dimension at least one and satisfying $K_i=c_iL_i$ for $c_i>0$ for 
$i=1,\cdots,m$, and  $\sum_{i=1}^m{\rm dim}K_i=n$.
\end{theo}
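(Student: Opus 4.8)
The strategy is to transfer the problem to the unconditional case via the simplicial cone $C$ furnished by Proposition~\ref{basikothe}, apply the known Theorem~\ref{B.F.M.S.} there, and then pull the inequality back using the linear invariance \eqref{linearinv} of the logarithmic sum together with the tiling of $\R^n$ by the reflected copies $g_1C,\dots,g_lC$. Concretely, let $C$ be the cone from Proposition~\ref{basikothe} associated with $K$ (note the hypotheses guarantee ${\rm Fix}\,\mathcal{R}(K)=\{o\}$; one should remark that $L$ is automatically invariant under the same group $H$, since $H\subseteq \mathcal O(K)$ is generated by reflections and the same Barthe--Fradelizi decomposition applies — more carefully, one needs the cone $C$ to work simultaneously for $K$ and $L$, which holds because the walls of $C$ are orthogonal reflection symmetries of both bodies: $A_1,\dots,A_n$ and hence $H$ fixes both). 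Let $\Phi\in{\rm GL}(n)$ with $\Phi(C)=\R^n_+$, and define the unconditional bodies $\bar K,\bar L$ by $\bar K\cap\R^n_+=\Phi(K\cap C)$, $\bar L\cap\R^n_+=\Phi(L\cap C)$; these are genuine unconditional convex bodies by Proposition~\ref{basikothe}\eqref{the3}.

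First I would record the volume bookkeeping. Since $\R^n=\bigcup_{i=1}^l g_iC$ with pairwise disjoint interiors and each $g_i\in H\subseteq\mathcal O(K)$ is an isometry preserving $K$, we get $V(K)=l\cdot V(K\cap C)$, and likewise $V(K\cap C)=|\det\Phi|^{-1}V(\Phi(K\cap C))=|\det\Phi|^{-1}\cdot 2^{-n}V(\bar K)$; the same for $L$ and for $M:=(1-\lambda)\cdot K+_0\lambda\cdot L$, which is invariant under $H$ by the remark following \eqref{linearinv}. So it suffices to compare $V(M\cap C)$ with $V(\bar K),V(\bar L)$. The key identity is that $L_0$-summation commutes with the restriction-and-$\Phi$ operation: by \eqref{linearinv}, $\Phi(M)=(1-\lambda)\cdot\Phi(K)+_0\lambda\cdot\Phi(L)$, and one checks that intersecting with $\R^n_+$ and then taking the unconditional hull is compatible with $+_0$ — that is, $\overline{M}=(1-\lambda)\cdot\bar K+_0\lambda\cdot\bar L$ as unconditional bodies. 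This follows from the coordinatewise description of the support function and the fact (Lemma~\ref{autokainden}) that outer normals at boundary points of $K$ inside $C$ again lie in $C$, so that the support function of $M$ on $C^*$ — equivalently of $\bar M$ on $\R^n_+$ — is exactly the pointwise geometric mean $h_{\bar K}^{1-\lambda}h_{\bar L}^\lambda$. Granting this, Theorem~\ref{B.F.M.S.} applied to $\bar K,\bar L$ gives $V(\bar M)\geq V(\bar K)^{1-\lambda}V(\bar L)^\lambda$, and unwinding the volume identities above yields \eqref{log.B.M.inequality}.

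For the equality case, equality in \eqref{log.B.M.inequality} forces equality in Theorem~\ref{B.F.M.S.} for $\bar K,\bar L$, hence $\bar K=\bar K_1\oplus\cdots\oplus\bar K_m$, $\bar L=\bar L_1\oplus\cdots\oplus\bar L_m$ with $\bar K_j,\bar L_j$ unconditional dilates. I would then pull this orthogonal splitting back through $\Phi^{-1}$ and the reflection group: the coordinate subspaces realizing the $\oplus$-decomposition of $\bar K$ correspond, via $\Phi^{-1}$, to a decomposition of $C$ respected by $K\cap C$, and because $K$ is the union of the $H$-translates of $K\cap C$ with $H$ generated by the wall reflections, this propagates to a genuine Minkowski (not necessarily orthogonal) decomposition $K=K_1+\cdots+K_m$, $L=L_1+\cdots+L_m$ with each pair homothetic and each summand $H$-invariant, hence $A_1,\dots,A_n$-invariant. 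The main obstacle I anticipate is precisely this last transfer of the equality structure: showing that a splitting of the restricted bodies over the cone $C$ globalizes correctly under the reflection group — one must verify the pieces fit together consistently across adjacent chambers $g_iC$, which is where the combinatorial geometry of the cone tiling (Lemma~\ref{Propert.H,C}) and the structure of the reflection group really enter. Establishing the compatibility of $+_0$ with the restriction map, and handling the degenerate "$K,L$ dilates" branch separately, are the other technical points but should be routine.
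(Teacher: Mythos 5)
Your proof of the inequality itself is correct and follows the same route as the paper: tile $\R^n$ by the reflected chambers $g_iC$, transfer to the unconditional situation via $\Phi$, use $\Phi(A)=B$ (i.e.\ the identity \eqref{outeafto}) to see that the $L_0$-sum commutes with the restriction-and-straighten operation, and invoke Theorem~\ref{B.F.M.S.}. That part stands.

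The equality case, however, is where the genuine work lies, and you have not supplied it — you yourself flag it as ``the main obstacle I anticipate.'' The issue is not merely whether the pieces fit together across chambers; it is more specific. Equality in Theorem~\ref{B.F.M.S.} gives $\bar L=\Phi_0\bar K$ for a positive diagonal $\Phi_0$, and $\bar K=\bar K_1\oplus\cdots\oplus\bar K_m$ along coordinate subspaces $\bar E_\beta$ determined by the eigenspaces of $\Phi_0$. These $\bar E_\beta$ are \emph{a priori} arbitrary coordinate subspaces of $\R^n_+$: there is no reason, from the unconditional theorem alone, why $\Phi^{-1}\bar E_\beta$ should be compatible with the Barthe--Fradelizi decomposition $\R^n=E_1\oplus\cdots\oplus E_k$. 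If, say, some $\bar E_\beta$ cut a three-dimensional block $E_\alpha$ carrying a regular-simplex symmetry into a line plus a plane, the resulting $K_\beta$ would \emph{not} be invariant under the reflections of that block, and the claimed decomposition of $K$ would fail. The paper's proof devotes most of its length to ruling this out — the claim \eqref{fragkosiriani}, namely that each $\bar E_\beta$ is a direct sum of whole blocks $E_\alpha$ — via a contradiction argument that uses Lemma~\ref{autokainden} (normals at smooth boundary points inside $C$ again lie in $C$) together with Proposition~\ref{prop.for.chambers}(iii), the fact that $\langle x,y\rangle>0$ for nonzero $x,y$ in a chamber cone of dimension $\geq 2$, which forces $P_{E_\alpha}\Phi^t u\in{\rm relint}\,C_\alpha^*$ and contradicts $r<\dim E_\alpha$. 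Without proving something equivalent to \eqref{fragkosiriani}, the equality case is not established. (A secondary inaccuracy: for this theorem with \emph{orthogonal} reflections, the resulting decomposition $K=K_1\oplus\cdots\oplus K_m$ is along orthogonal summands $\Phi^{-1}\bar E_\beta=\bar E_\beta$, not merely a general Minkowski sum; the non-orthogonal version appears only later in Theorem~\ref{logBMsymmetry} after the John-ellipsoid normalization.)
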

\begin{proof}
Let $G\subset O(n)$ be the closure of the group generated by $A_1,\ldots,A_n$.
We use the notation of Proposition~\ref{basikothe} applied to this $G$. 
In particular, for some $k\geq 1$,  $\R^n=\oplus_{\alpha=1}^kE_{\alpha}$
for non-trivial linear subspaces $E_1,\ldots,E_k$ 
where $E_1,\ldots,E_k$ are pairwise orthogonal if $k\geq 2$.
In addition, $C=\oplus_{\alpha=1}^k C_\alpha$ is the simplicial cone of Proposition~\ref{basikothe} where $C_\alpha$ is the Weyl chamber for the finite group $\widetilde{G}_\alpha\subset \mathcal{O}(E_\alpha)$ generated by reflections through the walls of $C_\alpha$ in $E_\alpha$ and acting irreducibly on $E_\alpha$  for
$\alpha=1,\ldots,k$, and  
$\widetilde{G}=\widetilde{G}_1\times\ldots\times\widetilde{G}_k$ is a subgroup of $G$.

We fix an orthonormal basis $e_1,\ldots,e_n$ of $\R^n$ such that
 $\{e_i:\,e_i\in E_\alpha\}$ spans $E_\alpha$ for $\alpha=1,\ldots,k$, 
and hence $\R^n_+={\rm pos}\{e_1,\ldots,e_n\}$,
and let $\Phi\in{\rm GL}(n)$ be a linear transform such that $\Phi C_\alpha=\R^n_+\cap E_\alpha$ for $\alpha=1,\ldots,k$. We deduce that 
\begin{enumerate}[(a)]
\item $\Phi(C)=\R^n_+$;
\item $\Phi E_\alpha=E_\alpha$ for $\alpha=1,\ldots,k$.
\end{enumerate} 

Since $K$ and $L$ are convex bodies invariant under $G$, their $L_0$ sum $(1-\lambda)\cdot K +_0 \lambda\cdot L$
is also invariant under $G$, and in turn  invariant under $\widetilde{G}$.
We write $|\widetilde{G}|$ to denote the cardinatility of $\widetilde{G}$.
It follows from  the linear covariance of the logarithmic sum and Proposition~\ref{basikothe} (iii) that
	\begin{align}
\nonumber
	V((1-\lambda)\cdot K +_0 \lambda\cdot L)&=\sum_{g\in \widetilde{G}}
V(g C\cap[ (1-\lambda)\cdot K +_0 \lambda\cdot L])\\
\label{csdvdbvsdcs}
&=|\widetilde{G}|\cdot V(C\cap[(1-\lambda)\cdot K +_0 \lambda\cdot L]).	
\end{align}

Since any convex body is the intersection of the supporting halfspaces at the smooth boundary points according to
Theorem~2.2.6 in Schneider \cite{book4}, we deduce from Lemma~\ref{autokainden} (ii) and \eqref{L0sumsmooth} that
$$
C\cap[(1-\lambda)\cdot K +_0 \lambda\cdot L]=
\left\{x\in C:\langle x,u\rangle \leq h_{K}(u)^{1-\lambda}h_{L}(u)^{\lambda}\mbox{ }\forall\,u\in C\right\}.
$$

	Let $\bar{K}$ and $\bar{L}$ the unconditional sets defined by $\bar{K}\cap\R^n_+=\Phi(K\cap C)$ and $\bar{L}\cap\R^n_+=\Phi(L\cap C)$ respectively. Proposition~\ref{basikothe} \eqref{the3} implies that $\bar{K}$ and $\bar{L}$ are unconditional convex bodies. We observe that if $u\in\R^n_+$, then
\begin{equation}
\label{barKPhiK}
h_{\bar{K}}(u)=\max_{x\in\bar{K}\cap \R^n_+}\langle u,x\rangle\leq h_{\Phi K}(u)\mbox{ \ and \ }
h_{\bar{L}}(u)=\max_{x\in\bar{L}\cap \R^n_+}\langle u,x\rangle\leq h_{\Phi L}(u).
\end{equation}
	
The key observation is that
\begin{align}\label{outeafto}
	V(\R^n_+\cap[(1-\lambda)\cdot \Phi(K) +_0 \lambda\cdot \Phi(L)])
\geq V(\R^n_+\cap[(1-\lambda)\cdot\overline{K}+_0\lambda\cdot\overline{L}]),
\end{align}
which follows from (a), \eqref{barKPhiK} and $\Phi^{-t}C\subset \R^n_+$ (see Lemma~\ref{unco.convex}) and 
\begin{eqnarray*}
	\R^n_+\cap \Phi[(1-\lambda)\cdot K +_0 \lambda\cdot L]&=&
\Phi\left(\left\{x\in C:\langle x,u\rangle \leq h_{K}(u)^{1-\lambda}h_{L}(u)^{\lambda}
\mbox{ }\forall\,u\in C\right\}
\right)\\
&=&
\left\{x\in \R^n_+:\langle x,v\rangle \leq 
h_{\Phi K}(v)^{1-\lambda}h_{\Phi L}(v)^{\lambda}\mbox{ }\forall\,v\in  \Phi^{-t}C\right\}\\
&\supset&
\left\{x\in \R^n_+:\langle x,v\rangle \leq 
h_{\bar{K}}(v)^{1-\lambda}h_{\bar{L}}(v)^{\lambda}\mbox{ }\forall\,v\in  \Phi^{-t}C\right\}\\
&\supset&\left\{x\in \R^n_+:\langle x,v\rangle \leq
 h_{\bar{K}}(v)^{1-\lambda}h_{\bar{L}}(v)^{\lambda}\mbox{ }\forall\,v\in  \R^n_+\right\}\\
&=& \R^n_+\cap[(1-\lambda)\cdot\overline{K}+_0\lambda\cdot\overline{L}].
\end{eqnarray*}

	From \eqref{csdvdbvsdcs}, \eqref{outeafto} and Logarithmic Brunn Minkowski inequality
Theorem~\ref{B.F.M.S.} for unconditional convex bodies, we deduce
	\begin{eqnarray}
	\nonumber V((1-\lambda)\cdot K +_0 \lambda\cdot L)&=& |\widetilde{G}|\cdot	V(C\cap[(1-\lambda)\cdot K +_0 \lambda\cdot L])\\ \nonumber
	&=&\frac{|\widetilde{G}|}{|{\rm det}\Phi|}\cdot	
	V(\R^n_+\cap[(1-\lambda)\cdot \Phi(K) +_0 \lambda\cdot \Phi(L)])\\ \nonumber
	&\geq&\frac{|\widetilde{G}|}{|{\rm det}\Phi|}\cdot		
V(\R^n_+\cap[(1-\lambda)\cdot \bar{K} +_0 \lambda\cdot \bar{L}])\\ \nonumber
	&=&\frac{|\widetilde{G}|}{2^n|{\rm det}\Phi|}	\cdot	
V((1-\lambda)\cdot \bar{K} +_0 \lambda\cdot \bar{L})\\ \label{equali-BAR-KL}
	&\geq&\frac{|\widetilde{G}|}{2^n|{\rm det}\Phi|}\cdot		
V(\bar{K})^{1-\lambda}V(\bar{L})^{\lambda} \\ \nonumber
	&=&	V(K)^{1-\lambda}V(L)^{\lambda},
	\end{eqnarray}	
proving the Logarithmic Brunn-Minkowski inequality \eqref{log.B.M.inequality}.\\

	Assume now that we have equality in \eqref{log.B.M.inequality}. In particular,  equality holds for the unconditional convex bodies $\bar{K}$ and $\bar{L}$ in \eqref{equali-BAR-KL}. Therefore, Theorem~\ref{B.F.M.S.} implies that
$\bar{K}=\bar{K}_1\oplus\cdots\oplus\bar{K}_m$ and 
$\bar{L}=\bar{L}_1\oplus\cdots\oplus\bar{L}_m$ for some $m\geq 1$, where $\bar{K}_{\beta}$ and $\bar{L}_{\beta}$ are unconditional convex sets, $\bar{K}_{\beta}=\theta_\beta\bar{L}_{\beta}$ for some 
$\theta_{\beta}>0$, $\beta=1,\cdots,m$, and
$\sum_{\beta=1}^m{\rm dim}\bar{K}_\beta=n$.

If $m=1$, then
\begin{eqnarray*}
	K&=&\bigcup_{g\in \widetilde{G}}g(K\cap C)
	=\bigcup_{g\in \widetilde{G}}g \circ\Phi^{-1}(\bar{K}\cap \R^n_+)
	=\bigcup_{g\in \widetilde{G}}g \circ\Phi^{-1}(\theta_1\bar{L}\cap \R^n_+)\\
&=&\theta_1\bigcup_{g\in \widetilde{G}}g(L\cap C)
	=\theta_1\cdot L;
	\end{eqnarray*}
 therefore,  $K$ and $L$ are dilates.
	
If  $m\geq 2$, then we write $\bar{E}_{\beta}={\rm lin}\bar{K}_{\beta}$ for $\beta=1,\cdots,m$,
 and hence  $\R^n=\oplus_{\beta=1}^m\bar{E}_{\beta}$.

We claim that each $\bar{E}_{\beta}$ is the direct sum of some $E_{\alpha}$; namely, 
there exists some non-empty $\Xi_\beta\subset\{1,\cdots,k\}$ such that
	\begin{align}\label{fragkosiriani}
	\bar{E}_{\beta}=\oplus_{\alpha\in \Xi_{\beta}}E_{\alpha}
	\end{align}
We suppose that \eqref{fragkosiriani} does not hold, and seek a contradiction.
We set $u_i=\Phi^{-1}(e_i)$ and  $v_i=\Phi^{t}(e_i)$ for $i=1,\ldots,n$; therefore,
$C={\rm pos}\{u_1,\ldots,u_n\}$ and  $C^*={\rm pos}\{v_1,\ldots,v_n\}$.                           
For any $\alpha=1,\ldots,k$ and $\beta=1,\ldots,m$, we consider
$$
I_\alpha=\{i:\,e_i\in E_\alpha\}
\mbox{ \ and \ }
J_\beta=\{j:\,e_j\in \bar{E}_\beta\}.
$$
Since $\{1,\ldots,n\}$ is partitioned in two ways  once into $I_1,\ldots,I_k$,
and secondly into $J_1,\ldots,J_m$, the indirect hypothesis
yields  there exist $\tilde{\alpha}\in\{1,\cdots,k\}$ and $\tilde{\beta}\in\{1,\cdots,m\}$ such that
$I_{\tilde{\alpha}}\cap J_{\tilde{\beta}}$ is non-empty and is a proper subset of
$I_{\tilde{\alpha}}$. It follows from Lemma~\ref{prop.for.chambers} (v) applied to $C_{\tilde{\alpha}}$
and the partition $I_{\tilde{\alpha}}=(I_{\tilde{\alpha}}\cap J_{\tilde{\beta}})\cup(I_{\tilde{\alpha}}\backslash J_{\tilde{\beta}})$ that there exist 
\begin{equation}
\label{vpvq}
\mbox{$p\in I_{\tilde{\alpha}}\cap J_{\tilde{\beta}}$ and
$q\in I_{\tilde{\alpha}}\backslash J_{\tilde{\beta}}$
such that $\langle v_p,v_q\rangle<0$.}
\end{equation}

Since for any convex body, smooth boundary points are dense on the boundary, there exists
a $z_0\in{\rm relint}\,(\bar{K}_{\tilde{\beta}}\cap \R^n_+)$ and $s>0$ such that 
$z=z_0+se_p\in \partial' \bar{K}_{\tilde{\beta}}\cap \R^n_+$. It follows that
$\langle \nu_{\bar{K}_{\tilde{\beta}}\cap \R^n_+}(z),e_p\rangle>0$, and hence
\begin{equation}
\label{nuKtildebetaz}
\nu_{\bar{K}_{\tilde{\beta}}\cap \R^n_+}(z)=\sum_{j\in J_{\tilde{\beta}}}\gamma_je_j
\mbox{ \ where $\gamma_j\geq 0$ for $j\in J_{\tilde{\beta}}$ and
$\gamma_p>0$.}
\end{equation}
We choose a $y\in{\rm relint}\sum_{\beta\neq \tilde{\beta}}(\bar{K}_\beta\cap \R^n_+)$; therefore,
$z+y\in\partial'\bar{K}\cap \R^n_+$ and we deduce from \eqref{nuKtildebetaz} that
\begin{equation}
\label{nubarKyz}
\nu_{\bar{K}}(z+y)=\nu_{\bar{K}_{\tilde{\beta}}\cap \R^n_+}(z)=\sum_{j\in J_{\tilde{\beta}}}\gamma_je_j
\mbox{ \ where $\gamma_j\geq 0$ for $j\in J_{\tilde{\beta}}$ and
$\gamma_p>0$.}
\end{equation}
Writing $\Phi^{-1}z=z'$ and $\Phi^{-1}y=y'$, it follows that
\begin{eqnarray}
\label{yzprime}
z'+y'&\in&\partial'K\cap C\\
\nonumber
\nu_{K}(z'+y')&=&\theta\,\Phi^{t}\nu_{\bar{K}}(z+y)
\mbox{ \ for $\theta=\|\Phi^{t}\nu_{\bar{K}}(z+y)\|^{-1}$},
\end{eqnarray}
which combined with \eqref{nubarKyz} leads to
 \begin{equation}
\label{nuKyz}
\nu_{K}(z'+y')=\sum_{j\in J_{\tilde{\beta}}}\theta\gamma_jv_j
\mbox{ \ where $\gamma_j\geq 0$ for $j\in J_{\tilde{\beta}}$ and
$\gamma_p>0$.}
\end{equation}
In turn, we deduce from \eqref{nuKyz}, $\langle v_p,v_q\rangle< 0$ (see \eqref{vpvq}) and
 $\langle v_p,v_j\rangle\leq 0$ for $j\in J_{\tilde{\beta}}$ 
(see Proposition~\ref{basikothe} (iv)) that
$$
\langle v_p,\nu_{K}(z'+y')\rangle=\theta\gamma_q\langle v_p,v_q\rangle
+\sum_{j\in J_{\tilde{\beta}}\atop j\neq q}\theta\gamma_j
\langle v_p,v_j\rangle<0,
$$
and hence $C=\{x:\,\langle x,v_i\rangle\geq 0\mbox{ for }i=1,\ldots,n\}$
(see \eqref{simplisial convex cone form2}) yields $\nu_{K}(z'+y')\not \in C$.

On the other hand, combining \eqref{yzprime} and Proposition~\ref{basikothe} (v)
implies that $\nu_{K}(z'+y') \in C$. This contradiction finally proves \eqref{fragkosiriani}.

We recall that $M|E$ denotes the orthogonal projection of a compact convex set $M$ onto a linear subspace $E$.
We deduce from \eqref{fragkosiriani} and Proposition~\ref{basikothe} (i), (ii) and (iii) that
for each $\beta=1,\ldots,m$, there exist convex compact sets $K_\beta,L_\beta\subset \bar{E}_\beta$ such that
\begin{eqnarray}
\nonumber
K_\beta\cap C&=&\Phi^{-1}(\bar{K}_\beta\cap \R^n_+);\\
\label{KKbeta}
K&=&K_\beta+(K|\bar{E}_\beta^\bot)\mbox{ \ and \ }K|\bar{E}_\beta=K_\beta;\\
\nonumber
L_\beta\cap C&=&\Phi^{-1}(\bar{L}_\beta\cap \R^n_+);\\
\label{LLbeta}
L&=&L_\beta+(L|\bar{E}_\beta^\bot)\mbox{ \ and \ }L|\bar{E}_\beta=L_\beta.
\end{eqnarray}
In turn, we verify that \eqref{KKbeta} and \eqref{LLbeta} yield that
\begin{equation}
\label{KLbetaclaim}
K=\oplus_{\beta=1}^mK_\beta\mbox{ \ and \ }L=\oplus_{\beta=1}^mL_\beta
\end{equation}
by induction on $m\geq 2$. We only provide the argument in the case of $K$, because the argument for $L$ is similar.

If $m=2$, then $\bar{E}_1^\bot=\bar{E}_2$, and so \eqref{KLbetaclaim} readily follows.

If $m\geq 3$, then let $K'=K|\bar{E}_m^\bot$. Let $1\leq \beta\leq m-1$.
The main observation we use is that if $\Pi_0\subset\Pi$ are linear subspaces, then
$(X|\Pi)|\Pi_0=X|\Pi_0$ for $X\subset \R^n$.
  On the one hand, we deduce from $\bar{E}_\beta\subset  \bar{E}_m^\bot$
that
$$
K'|\bar{E}_\beta=(K|\bar{E}_m^\bot)|\bar{E}_\beta=K|\bar{E}_\beta=K_\beta.
$$
On the other hand, we also use that if $X\subset \bar{E}_\beta^\bot$, then  
 $X|\bar{E}_m^\bot=X|(\bar{E}_\beta^\bot\cap \bar{E}_m^\bot)$ follows from $\bar{E}_\beta\subset  \bar{E}_m^\bot$. Therefore,
\begin{eqnarray*}
K'&=&\Big(K_\beta+(K|\bar{E}_\beta^\bot)\Big)|\bar{E}_m^\bot=
K_\beta|\bar{E}_m^\bot+(K|\bar{E}_\beta^\bot)|\bar{E}_m^\bot=
K_\beta+(K|\bar{E}_\beta^\bot)|(\bar{E}_m^\bot\cap \bar{E}_\beta^\bot)\\
&=&K_\beta+K|(\bar{E}_m^\bot\cap \bar{E}_\beta^\bot)
=K_\beta+K'|(\bar{E}_m^\bot\cap \bar{E}_\beta^\bot),
\end{eqnarray*}
implying $K'=\oplus_{\beta=1}^{m-1}K_\beta$ by induction on $m$. Since $K=K_m+K'$ by \eqref{KKbeta},
we conclude \eqref{KLbetaclaim}.

As $K$, $L$ and $\bar{E}_\beta$ are invariant under $G$, also $K_\beta$ and $L_\beta$ are invariant under $G$
for $\beta=1,\ldots,m$.
Since $\bar{K}_{\beta}=\theta_\beta\bar{L}_{\beta}$, we also deduce that $K_{\beta}=\theta_\beta L_{\beta}$
for $\beta=1,\ldots,m$, verifying the necessity of the condition in Theorem~\ref{logBMwithsymmetries}
in the case of equality in \eqref{log.B.M.inequality}.

Finally, if $K$ and $L$ are convex bodies with $o\in{\rm int}K$ and $o\in{\rm int}L$,
and $K=K_1+\cdots+ K_m$ and $L=L_1+\cdots + L_m$ for 
$m\geq 1$ and compact convex sets $K_i,L_i$, $i=1,\cdots,m$, 
having dimension at least one and satisfying $o\in K_i$ and $K_i=\theta_iL_i$ for $\theta_i>0$ for 
$i=1,\cdots,m$, and  $\sum_{i=1}^m{\rm dim}K_i=n$, 
then equality holds in \eqref{log.B.M.inequality} even without symmetry assumption 
according to Lemma~\ref{sumlowerlogBMequa}.
This completes the proof of Theorem~\ref{logBMwithsymmetries}. 
\end{proof}

We are ready to prove Theorem~\ref{logBMsymmetry}. 

\begin{proof}[Proof of Theorem~\ref{logBMsymmetry}]

According to John's theorem (see Schneider \cite{book4}), there exists a unique ellipsoid $E$ of minimal volume containing $K$, which is also known as L\"owner ellipsoid. It follows that $E$ is also invariant under $A_1,\ldots,A_n$. For a linear transform 
 $\Phi\in{\rm GL}(n)$ satisfying that $\Phi E=B^n$, the linear transforms $A'_i=\Phi A_i\Phi^{-1}$, $i=1,\ldots,n$ leave $B^n$ invariant, thus $A'_i$ is an orthogonal reflection through the hyperlane $H'_i=\Phi H_i$
where $H_1\cap\ldots\cap H_n=\{o\}$. In addition, the convex bodies $K'=\Phi K$ and $L'=\Phi L$ are invariant under
$A'_1,\ldots,A'_n$.

Finally, applying Theorem~\ref{logBMwithsymmetries} to $K'$ and $L'$, and using the linear covariance of the $L_0$-sum
(see \eqref{linearinv}), we conclude Theorem~\ref{logBMsymmetry}.  
\end{proof}

\section{From log-B.M. for Lebesgue measure to log-B.M. with $e^{-\phi(x)}dx$ where $\phi(x)$ is any rotationally invariant convex function}
\label{secGaussian}

A function $f:\,\Omega \to [0,\infty)$ on a convex subset $\Omega\subset \R^n$ is called log-concave if
$f((1-s)x+sy)\geq f(x)^{1-s}f(y)^s$ for any $s\in[0,1]$ and $x,y\in\Omega$; namely, if
$f=e^{-\phi(x)}$ for a convex function $\phi:\R^n\to \R\cup \infty$. Analogously,
a measure $\nu$ on $\R^n$ is called log-concave if 
$d\nu(x)=f(x)\,dx$ for a log-concave function $f$ on $\R^n$ (which property is equivalent saying that
$\nu((1-\lambda)A+\lambda\,B)\geq \nu(A)^{1-\lambda}\nu(B)^\lambda$ for any $A,B\subset \R^n$ compact
according to Borell \cite{Bor75b}). 

We note that Saroglou \cite{Sar16} proved that on the class of $o$-symmetric convex bodies and measures,
the logarithmic-Brunn-Minkowski inequality  for the Lebesgue measure implies the logarithmic-Brunn-Minkowski inequality
for any log-concave measure.
  In other words, according to
Theorem~3.1 in \cite{Sar16}, if \eqref{logBMeq} holds for any $o$-symmetric convex bodies $K,L$ in $\R^n$, then for
any even convex $\varphi:\,\R^n\to(-\infty,\infty]$ function, we have
\begin{align*}
\int_{(1-\lambda)\cdot K +_0 \lambda\cdot L}e^{-\varphi(x)}dx \geq \Big( \int_K e^{-\varphi(x)}dx \Big)^{1-\lambda} 
\Big( \int_Le^{-\varphi(x)}dx \Big)^{\lambda}
\end{align*} 
 for any $o$-symmetric convex bodies  $K,L$. 

However, the proof of Theorem~3.1 in \cite{Sar16} does not actually use $o$-symmetry but a somewhat weaker
property of log-concave measures with rotational symmetry. Let $\varphi(x)=\psi(\|x\|)$ for an increasing convex function
$\psi:[0,\infty)\to(-\infty,\infty]$, and let $G$ be the subgroup generated by orthogonal reflections through
the linear hyperplanes $H_1,\ldots,H_n$ with $H_1\cap\ldots \cap H_n=\{o\}$, and hence if
$M$ is a convex body invariant under $G$, then
$M\cap\{\varphi\leq r\}$ is also invariant under $G$ for any $r>\varphi(o)=\psi(0)$.
 It follows that
if $K$ and $L$ are convex bodies invariant under $G$, then 
 all bodies
used in the proofs of Lemma~3.7  and Theorem~3.1 in \cite{Sar16} are also invariant under $G$; therefore,
the argument by Saroglou \cite{Sar16} yields the following theorem implying Theorem~\ref{Gaussian-BMsymmetry} in the case 
$\psi(t)=t^2$.

\begin{theo}
	\label{Logconv-BMsymmetry}
	Let  $\lambda\in(0,1)$, let $\varphi(x)=\psi(\|x\|)$ for an increasing convex function
$\psi:[0,\infty)\to(-\infty,\infty]$ and let $d\nu(x)=e^{-\varphi(x)}\,dx$ be the corresponding log-concave measure on $\R^n$. 
If  $H_1\cap\ldots\cap H_n=\{o\}$ holds for the linear hyperplanes $H_1,\ldots,H_n$, and the convex bodies $K$ and $L$ are invariant under the orthogonal reflections through $H_1,\ldots,H_n$, then
	$$
	\nu((1-\lambda)\cdot K +_0 \lambda\cdot L)\geq \nu(K)^{1-\lambda} \nu(L)^\lambda.
	$$
\end{theo}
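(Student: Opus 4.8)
The plan is to follow the method of Saroglou \cite{Sar16}, which derives the log-Brunn-Minkowski inequality for a log-concave measure from the one for the Lebesgue measure, and to check that his argument only uses a symmetry of the measure that our hypotheses supply. Write $\varphi(x)=\psi(\|x\|)$, let $\Omega_r=\{\varphi\le r\}$ --- a Euclidean ball --- and let $G$ be the group generated by the orthogonal reflections through $H_1,\dots,H_n$. The two structural facts needed beyond log-concavity, both already isolated in the paragraph preceding the statement, are: (a) $\Omega_r$ is invariant under $G$ for every $r$; and (b) $\varphi\circ A_i=\varphi$ for every reflection $A_i\in G$, since $\varphi$ is rotationally symmetric and $A_i\in O(n)$. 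Together with the fact that the $L_0$-sum of two $G$-invariant bodies is $G$-invariant (see \eqref{linearinv}), these force every convex body produced in Saroglou's proof --- in particular each truncation $K\cap\Omega_r$, $L\cap\Omega_r$, and the $L_0$-sums built from them --- to lie again in the class to which Theorem~\ref{logBMwithsymmetries} applies.

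Concretely, I would decompose $\nu$ along the radial level sets of $\varphi$: by the layer-cake formula, $\nu(M)=\int_0^\infty e^{-r}\,V(M\cap\Omega_r)\,dr$, so it suffices to control $r\mapsto V(K\cap\Omega_r)$, $r\mapsto V(L\cap\Omega_r)$ and $r\mapsto V\big(((1-\lambda)\cdot K+_0\lambda\cdot L)\cap\Omega_r\big)$. The basic inclusion is
$$\big((1-\lambda)\cdot K+_0\lambda\cdot L\big)\cap\Omega_r\ \supseteq\ (1-\lambda)\cdot(K\cap\Omega_{r_0})+_0\lambda\cdot(L\cap\Omega_{r_1})$$
whenever $r$ is large enough relative to $r_0,r_1$ (this uses $h_{M\cap\Omega_\rho}\le\min\{h_M,h_{\Omega_\rho}\}$ and that $h_{\Omega_\rho}$ is the constant equal to the radius of $\Omega_\rho$). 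One then applies Theorem~\ref{logBMwithsymmetries} --- which, crucially, we have now established in \emph{every} dimension for convex bodies invariant under $n$ orthogonal reflections with trivially intersecting hyperplanes --- to the $G$-invariant bodies $K\cap\Omega_{r_0}$ and $L\cap\Omega_{r_1}$, and assembles the resulting slice-wise bounds through a one-dimensional Pr\'ekopa--Leindler inequality (this is the role of Lemma~3.7 of \cite{Sar16}, and is where the convexity of $\varphi$, i.e. the log-concavity of $\nu$, is used). This yields
$$\nu\big((1-\lambda)\cdot K+_0\lambda\cdot L\big)\ \ge\ \nu(K)^{1-\lambda}\,\nu(L)^{\lambda},$$
first for the compactly supported approximations of $\nu$ considered in \cite{Sar16} and then, by the limiting argument there, in general; the case $\psi(t)=t^2$ recovers Theorem~\ref{Gaussian-BMsymmetry}.

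The step I expect to be the real obstacle is not conceptual but consists in going through Lemma~3.7 and Theorem~3.1 of \cite{Sar16} item by item and verifying that every auxiliary convex body occurring there (the truncations, their $L_0$-sums, and any higher-dimensional bodies used to represent $\nu$) is invariant under $G$ and, should any step pass to a product space $\R^n\times\R^m$, that the hyperplane-intersection hypothesis survives --- which it does once one adjoins to $H_1\oplus\R^m,\dots,H_n\oplus\R^m$ the $m$ coordinate hyperplanes of the new factor, since a body lifted over $\R^m$ by a profile depending only on the Euclidean norm of the new coordinates is invariant under all of these reflections. Since the statement asserts no characterization of equality, none of Saroglou's equality analysis is needed here.
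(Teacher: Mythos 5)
Your proposal is correct and takes essentially the same route as the paper: cite Saroglou's reduction from the Lebesgue case, observe that rotational symmetry of $\varphi$ makes each sublevel set $\{\varphi\le r\}$ a Euclidean ball invariant under every orthogonal reflection, hence every auxiliary body in Saroglou's argument (truncations, their $L_0$-sums, and the lifted bodies in any added dimensions) stays in the class to which Theorem~\ref{logBMwithsymmetries} applies. You actually spell out more of Saroglou's mechanics (the layer-cake decomposition, the inclusion for the truncated $L_0$-sum, the product-space lift) than the paper does; the paper simply asserts that ``all bodies used in the proofs of Lemma~3.7 and Theorem~3.1 in \cite{Sar16} are also invariant under $G$,'' which is precisely the observation you make and justify.
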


\section{The proof of Theorem~\ref{logMsymmetry} and Theorem~\ref{Minkowski-symmetry-uniqueness}}
\label{secuniqueness}

The following Proposition~\ref{logMcalC} was stated in the case when $\mathcal{C}$ is
the family of orgin symmetric bodies in B\"or\"oczky, Lutwak, Yang, Zhang \cite{BLYZ12}, but the method
of  \cite{BLYZ12} (taking the derivative of \eqref{logMcalCcond} with respect to $\lambda$ at $\lambda=0$) actually yields the following slightly more general statement.

\begin{prop}
\label{logMcalC}
Let $\mathcal{C}$ be a class of convex bodies containing the origin in their interior
such that $\mathcal{C}$ is
 closed under dilation and the $L_0$-sum (i.e. $(1-\lambda)\cdot K+_0\lambda\cdot L\in\mathcal{C}$ for any $K,L\in\mathcal{C}$, $\lambda\in[0,1]$), and
\begin{equation}
\label{logMcalCcond}
V((1-\lambda)\cdot K +_0 \lambda\cdot L)\geq V(K)^{1-\lambda} V(L)^\lambda
\end{equation}
holds for any $K,L\in\mathcal{C}$. Then  
$$
\int_{S^{n-1}}\log \frac{h_L}{h_K}\,dV_K\geq \frac{V(K)}n\log\frac{V(L)}{V(K)}
$$
for any $K,L\in\mathcal{C}$ with equality if and only if 
$V(\frac12\cdot K +_0 \frac12\cdot L)= V(K)^{1/2} V(L)^{1/2}$.
\end{prop}

Now we recall the argument in B\"or\"oczky, Lutwak, Yang, Zhang \cite{BLYZ12} to prove 
Theorem~\ref{Minkowski-symmetry-uniqueness} in the slightly more general form of 
Proposition~\ref{Minkowski-symmetry-uniqueness0}.

\begin{prop}
\label{Minkowski-symmetry-uniqueness0}
Let $\mathcal{C}$ be a class of convex bodies containing the origin in their interior
such that $\mathcal{C}$ is
 closed under dilation and the $L_0$-sum, and
\begin{equation}
\label{logMcalCcond1}
V((1-\lambda)\cdot K +_0 \lambda\cdot L)\geq V(K)^{1-\lambda} V(L)^\lambda
\end{equation}
holds for any $K,L\in\mathcal{C}$.
If $V_K=V_L$ for $K,L\in\mathcal{C}$, then
$V(\frac12\cdot K +_0 \frac12\cdot L)=V(K)^{1/2}V(L)^{1/2}$.
\end{prop}
\begin{proof}
We deduce from $V_K=V_L$ and the log-Minkowski inequality Theorem~\ref{logMcalC}
that
	\begin{align*}
	\int_{S^{n-1}}{\rm log}h_LdV_L&=\int_{S^{n-1}}{\rm log}h_LdV_K\geq 
\int_{S^{n-1}}{\rm log}h_KdV_K=\int_{S^{n-1}}{\rm log}h_KdV_L\\
	&\geq \int_{S^{n-1}}{\rm log}h_LdV_L.
	\end{align*}
Thus we have equality in Theorem~\ref{logMcalC}, proving
$V(\frac12\cdot K +_0 \frac12\cdot L)=V(K)^{1/2}V(L)^{1/2}$.
\end{proof}

We observe that Theorem~\ref{logMsymmetry} follows from Theorem~\ref{logBMsymmetry}
and Proposition~\ref{logMcalC}, and 
 Theorem~\ref{Minkowski-symmetry-uniqueness} follows from
Theorem~\ref{logBMsymmetry} and  Proposition~\ref{Minkowski-symmetry-uniqueness0}.

\section{Appendix - Equality case in the log Brunn-Minkowski inequality for unconditional convex bodies}
\label{secAppendix}

This section is dedicated to the proof of the equality case of Theorem \ref{B.F.M.S.} from \cite{Sar15}, and correct a slight mistake in \cite{Sar15}. In particular, we show the case if $K$ and $L$ are unconditional convex bodies and
\begin{equation}
\label{Sarogloucorrected}
V((1-\lambda)\cdot K+_0\lambda\cdot L)=V(K)^{1-\lambda}V(L)^{\lambda},
\end{equation}
then $K$ and $L$ have dilated vector summands as described in Theorem \ref{B.F.M.S.}. For the convenience of the reader,
we review the whole argument.

The classical coordinatewise product of two unconditional convex bodies $K$ and $L$ in $\R^n$ is
$$
K^{1-\lambda}\cdot L^\lambda=
\{(\pm |x_1|^{1-\lambda}|y_1|^\lambda,\ldots,\pm |x_n|^{1-\lambda}|y_n|^\lambda):\,
(x_1,\ldots,x_n)\in K\mbox{ and }(y_1,\ldots,y_n)\in L \}.
$$
We recall the following well-known facts where (i) is due to 
Bollobas, Leader \cite{BoL95}, and (ii) is due to Saroglou \cite{Sar15}.

\begin{lemma}[Bollobas-Leader, Saroglou]
\label{coordinatewise-convex}
If $K$ and $L$ are unconditional convex bodies in $\R^n$ with respect to the same orthonormal basis and $\lambda\in(0,1)$, then
\begin{description}
\item{(i)} $K^{1-\lambda}\cdot L^\lambda$ is an unconditional convex body;
\item{(ii)} $K^{1-\lambda}\cdot L^\lambda\subset (1-\lambda)\cdot K+_0 \lambda \cdot L$.
\end{description}
\end{lemma}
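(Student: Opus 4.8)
The statement to prove is Lemma~\ref{coordinatewise-convex}: for unconditional convex bodies $K$ and $L$ and $\lambda\in(0,1)$, (i) $K^{1-\lambda}\cdot L^\lambda$ is an unconditional convex body, and (ii) $K^{1-\lambda}\cdot L^\lambda\subset (1-\lambda)K+_0\lambda L$.

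\textbf{Plan.} The plan is to reduce both statements to coordinatewise estimates and the arithmetic--geometric mean inequality, exploiting that everything is coordinate-symmetric so it suffices to work in the positive orthant $\R^n_+$. For (i), unconditionality of $K^{1-\lambda}\cdot L^\lambda$ is immediate from its definition (the $\pm$ signs are built in), so the content is convexity. I would first record the standard fact that an unconditional set $M$ is convex if and only if $M\cap\R^n_+$ is convex \emph{and} $M$ is a ``down-set'' in the orthant, i.e. $x\in M\cap\R^n_+$ and $0\le y_i\le x_i$ for all $i$ implies $y\in M$; equivalently, $M\cap\R^n_+=\{x\in\R^n_+:\ g(x)\le 1\}$ for a nondecreasing (in each coordinate) convex function $g$. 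Then I would pass to logarithmic coordinates: writing $x_i=e^{t_i}$, the set $K\cap\R^n_+$ corresponds to $\log K:=\{t\in\R^n:\ (e^{t_1},\dots,e^{t_n})\in K\}$, which is a \emph{convex} set in $\R^n$ (this is the classical observation that an unconditional convex body is ``log-concave'' in this sense; it follows because along any coordinate axis direction the body is a symmetric interval and the ``down-set'' property transfers through the convexity of $\exp$). In these coordinates the coordinatewise product becomes a Minkowski combination: $\log(K^{1-\lambda}\cdot L^\lambda)=(1-\lambda)\log K+\lambda\log L$, which is convex as a Minkowski average of convex sets; transporting back via $\exp$ (which preserves the down-set structure) gives convexity of $K^{1-\lambda}\cdot L^\lambda$.

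\textbf{Part (ii).} For the inclusion, let $z=(\pm|x_1|^{1-\lambda}|y_1|^\lambda,\dots)\in K^{1-\lambda}\cdot L^\lambda$ with $x\in K$, $y\in L$. Since both bodies and the $L_0$-sum are unconditional, it suffices to check $z\in(1-\lambda)K+_0\lambda L$ for $z$ in the positive orthant, i.e. to verify $\langle z,u\rangle\le h_K(u)^{1-\lambda}h_L(u)^\lambda$ for every $u\in S^{n-1}$. Again by unconditionality of $K$ and $L$ one has $h_K(u)=h_K(|u|)$ where $|u|=(|u_1|,\dots,|u_n|)$, so it is enough to take $u\in S^{n-1}\cap\R^n_+$. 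For such $u$, using coordinatewise nonnegativity and the weighted AM--GM inequality $a^{1-\lambda}b^\lambda\le(1-\lambda)a+\lambda b$ applied to $a=|x_i|u_i$, $b=|y_i|u_i$ (after first noting $|x_i|^{1-\lambda}|y_i|^\lambda u_i=(|x_i|u_i)^{1-\lambda}(|y_i|u_i)^\lambda$ since $u_i^{1-\lambda}u_i^\lambda=u_i$), I would write
\[
\langle z,u\rangle=\sum_i |x_i|^{1-\lambda}|y_i|^\lambda u_i
=\sum_i (|x_i|u_i)^{1-\lambda}(|y_i|u_i)^\lambda
\le \Big(\sum_i |x_i|u_i\Big)^{1-\lambda}\Big(\sum_i |y_i|u_i\Big)^\lambda,
\]
where the last step is Hölder's inequality with exponents $\tfrac1{1-\lambda},\tfrac1\lambda$. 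Since $x\in K$ and $K$ is unconditional, $(|x_1|,\dots,|x_n|)\in K$, so $\sum_i|x_i|u_i=\langle(|x_i|)_i,u\rangle\le h_K(u)$, and likewise for $L$; hence $\langle z,u\rangle\le h_K(u)^{1-\lambda}h_L(u)^\lambda$, giving $z\in(1-\lambda)K+_0\lambda L$ by definition of the $L_0$-sum.

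\textbf{Main obstacle.} The only genuinely delicate point is part (i): one must be careful that ``unconditional and convex'' really is captured by ``$\log K$ convex'' and that the coordinatewise product corresponds exactly to the Minkowski average in logarithmic coordinates, including a clean treatment of boundary faces where some coordinate vanishes (so $\log$ is $-\infty$). I would handle this by working with the closed convex sets $\overline{\log K}\subset\R^n\cup(\text{directions})$, or more elementarily by proving directly that for $p,q\in K^{1-\lambda}\cdot L^\lambda\cap\R^n_+$ and $\mu\in(0,1)$ the point $\mu p+(1-\mu)q$ lies in the body: write $p_i=a_i^{1-\lambda}b_i^\lambda$, $q_i=c_i^{1-\lambda}d_i^\lambda$ with $(a_i),(c_i)\in K$, $(b_i),(d_i)\in L$, and use convexity of $K$ and $L$ together with the fact that $(s,t)\mapsto s^{1-\lambda}t^\lambda$ is concave on $\R^2_+$ to produce witnessing points in $K$ and $L$ for $\mu p+(1-\mu)q$ — concretely, coordinatewise concavity gives $\mu a_i^{1-\lambda}b_i^\lambda+(1-\mu)c_i^{1-\lambda}d_i^\lambda\le(\mu a_i+(1-\mu)c_i)^{1-\lambda}(\mu b_i+(1-\mu)d_i)^\lambda$ after renormalizing, and then the down-set property of the unconditional body $K^{1-\lambda}\cdot L^\lambda$ (itself a consequence of the down-set property of $K$ and $L$) absorbs the inequality. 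This elementary route avoids any subtlety with infinite logarithmic coordinates and is what I would ultimately write down.
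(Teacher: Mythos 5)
Your ``elementary route'' matches the paper's proof exactly: establish the down-set (corner) property of $K^{1-\lambda}\cdot L^\lambda$, apply the concavity of $(s,t)\mapsto s^{1-\lambda}t^\lambda$ (equivalently, two-term H\"older) coordinatewise together with convexity of $K$ and $L$ to get convexity, and apply H\"older against a nonnegative normal direction to get the $L_0$-sum inclusion; the log-coordinates route you first sketch is not what the paper uses, and your instinct to avoid it because of the boundary faces where a coordinate vanishes is sound. One small correction of wording: in part (ii) you invoke the weighted AM--GM inequality $a^{1-\lambda}b^\lambda\le(1-\lambda)a+\lambda b$, but that would only yield the weaker containment $K^{1-\lambda}\cdot L^\lambda\subset(1-\lambda)K+\lambda L$; what your display actually (and correctly) uses is H\"older with exponents $\tfrac1{1-\lambda},\tfrac1\lambda$, which is the inequality needed for the geometric mean of the support functions.
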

\begin{proof}
For (i), we observe that if $t_i\geq |z_i|$ for $i=1,\ldots,n$ and $(t_1,\ldots,t_n)\in K^{1-\lambda}\cdot L^\lambda$, then
\begin{equation}
\label{corner}
(z_1,\ldots,z_n)\in K^{1-\lambda}\cdot L^\lambda.
\end{equation}
Now if $s\in(0,1)$, $p=(p_1,\ldots,p_n)\in K^{1-\lambda}\cdot L^\lambda$ and 
$\tilde{p}=(\tilde{p}_1,\ldots,\tilde{p}_n)\in K^{1-\lambda}\cdot L^\lambda$, then
there exist $x=(x_1,\ldots,x_n)\in K\cap \R^n_+$, $\tilde{x}=(\tilde{x}_1,\ldots,\tilde{x}_n)\in K\cap \R^n_+$,
$y=(y_1,\ldots,y_n)\in L\cap \R^n_+$ and $\tilde{y}=(\tilde{y}_1,\ldots,\tilde{y}_n)\in K\cap \R^n_+$
such that $|p_i|=x_i^{1-\lambda}y_i^\lambda$ and $|\tilde{p}_i|=\tilde{x}_i^{1-\lambda}\tilde{y}_i^\lambda$
for $i=1,\ldots,n$. Therefore first applying the the triangle and secondly the H\"older inequality yield that if $i=1,\ldots,n$, then
\begin{eqnarray}
\nonumber
|(1-s)p_i+s\tilde{p}_i|&\leq &(1-s)|p_i|+s\,|\tilde{p}_i|
=(1-s)x_i^{1-\lambda}y_i^\lambda+s\,\tilde{x}_i^{1-\lambda}\tilde{y}_i^\lambda\\
\label{coordinatwise-convex-est}
&\leq &((1-s)x_i+s\,\tilde{x}_i)^{1-\lambda}
((1-s)y_i+s\tilde{y}_i)^\lambda.
\end{eqnarray}
Combining \eqref{corner}, \eqref{coordinatwise-convex-est} and the convexity of $K$ and $L$ implies that
$(1-s)p+s\,\tilde{p}\in K^{1-\lambda}\cdot L^\lambda$, verifying (i).

For (ii), if $u=(u_1,\ldots,u_n)\in S^{n-1}$ and $(z_1,\ldots,z_n)\in K^{1-\lambda}\cdot L^\lambda$, then
there exist $x=(x_1,\ldots,x_n)\in K\cap \R_+^n$ and $y=(y_1,\ldots,y_n)\in L\cap \R_+^n$
such that $|z_i|=x_1^{1-\lambda}y_i^\lambda$ for $i=1,\ldots,n$. Therefore for 
$\tilde{u}=(|u_1|,\ldots,|u_n|)\in S^{n-1}$, it follows from the H\"older inequality that
\begin{eqnarray*}
\langle z,u\rangle &\leq & \sum_{i=1}^nx_i^{1-\lambda}y_i^\lambda |u_i|\leq
\left(\sum_{i=1}^nx_i |u_i|\right)^{1-\lambda}\left(\sum_{i=1}^ny_i |u_i|\right)^\lambda\\
&\leq& h_K(\tilde{u})^{1-\lambda}h_L(\tilde{u})^\lambda=h_K(u)^{1-\lambda}h_L(u)^\lambda,
\end{eqnarray*}
proving (ii).
\end{proof}

Next Bollobas, Leader \cite{BoL95} and later indepently Cordero-Erausquin, Fradelizi, Maurey \cite{CEFM04}
proved the "Brunn-Minkowski inequality for the coordinatewise product" \eqref{coordinatewise-ineq}, and the equality case was clarified by Saroglou \cite{Sar15}. The argument is based on the Pr\'ekopa-Leindler inequality (proved in various forms by Pr\'ekopa \cite{Pre71,Pre73}, Leindler \cite{Lei72},
 Borell \cite{Bor75a} and  Brascamp, Lieb \cite{BrL76}) whose equality case was clarified by Dubuc \cite{Dub77}
(see the survey Gardner \cite{gardner}). 
The multiplicative form Theorem~\ref{PL} of the Pr\'ekopa-Leindler inequality was stated first by Ball \cite{Bal} (see also  Uhrin \cite{Uhr94} and Bollobas, Leader \cite{BoL95}).

We note that integration is always with respect to the Lebesgue measure in $\R^n$
in this section. We recall that $f:\,\R^n\to\R_+$ is log-concave if
$f((1-s)x+s\,y)\geq f(x)^{1-s}f(y)^s$ holds for any $s\in(0,1)$ and $x,y\in\R^n$. Here we recall the 
Pr\'ekopa-Leindler inequality for log-concave functions.

\begin{theo}[Pr\'ekopa, Leindler]
\label{PL}
If integrable log-concave $f,g,h:\,\R^n\to\R_+$ and $\lambda\in(0,1)$ satisfy that
$h((1-\lambda)x+\lambda\,y)\geq f(x)^{1-\lambda}g(y)^\lambda$ for any $x,y\in\R^n$, then
$$
\int_{\R^n}h\geq \left(\int_{\R^n}f\right)^{1-\lambda}\left(\int_{\R^n}g\right)^\lambda,
$$
and if equality holds and $\int_{\R^n}f>0$ and $\int_{\R^n}g>0$, then there exist $a>0$ and $b\in\R^n$ such that
$$
g(x)=a\, f(x+b)\mbox{ \ and \ }h(x)=a^\lambda f(x+\lambda\, b).
$$
\end{theo}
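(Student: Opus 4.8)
The plan is to prove Theorem~\ref{PL} by induction on the dimension $n$, reducing the whole statement (the inequality together with the equality characterization) to the one-dimensional case, which I would settle by a monotone-rearrangement substitution. First I would dispose of trivialities and normalize: if $\int_{\R^n}f=0$ the inequality is immediate, so I may assume $0<\int_{\R^n}f,\int_{\R^n}g<\infty$, and after replacing $f$ by $f/\!\int_{\R^n}f$, $g$ by $g/\!\int_{\R^n}g$ and $h$ by $h/\big((\int_{\R^n}f)^{1-\lambda}(\int_{\R^n}g)^{\lambda}\big)$ — which preserves the pointwise hypothesis and every assertion about equality — I may assume $\int_{\R^n}f=\int_{\R^n}g=1$, so the claim becomes $\int_{\R^n}h\ge1$. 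Writing $x=(x',x_n)\in\R^{n-1}\times\R$ and, for $a,b\in\R$ with $c=(1-\lambda)a+\lambda b$, setting $f_a(x')=f(x',a)$, $g_b(x')=g(x',b)$, $h_c(x')=h(x',c)$, the slices satisfy the $(n-1)$-dimensional hypothesis, so the inductive hypothesis applied slicewise shows that $F(a):=\int_{\R^{n-1}}f_a$, $G(b):=\int_{\R^{n-1}}g_b$, $H(c):=\int_{\R^{n-1}}h_c$ obey $H((1-\lambda)a+\lambda b)\ge F(a)^{1-\lambda}G(b)^{\lambda}$; the one-dimensional case then yields $\int_{\R^n}h=\int_\R H\ge\big(\int_\R F\big)^{1-\lambda}\big(\int_\R G\big)^{\lambda}=1$. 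Here I would record the standard measurability caveat: $F,G,H$ need not be Borel, which is handled either by passing to upper integrals or by first replacing $h$ with the upper semicontinuous majorant $x\mapsto\sup\{f(u)^{1-\lambda}g(v)^{\lambda}:(1-\lambda)u+\lambda v=x\}$, which only decreases $\int h$ and leaves the hypotheses intact.

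For the one-dimensional case I would argue as follows. Let $\phi,\psi,\theta\colon\R\to\R_+$ be integrable with $\int\phi=\int\psi=1$ and $\theta((1-\lambda)s+\lambda t)\ge\phi(s)^{1-\lambda}\psi(t)^{\lambda}$. Let $T$ be the monotone transport map from $\phi\,ds$ to $\psi\,dt$, characterized by $\int_{-\infty}^{s}\phi=\int_{-\infty}^{T(s)}\psi$, so that $\psi(T(s))\,T'(s)=\phi(s)$ for a.e.\ $s$ in the support of $\phi$. The function $z(s):=(1-\lambda)s+\lambda T(s)$ is nondecreasing; substituting $y=z(s)$, using $\psi(T(s))=\phi(s)/T'(s)$, and then the weighted arithmetic--geometric mean inequality $(1-\lambda)\cdot1+\lambda T'(s)\ge T'(s)^{\lambda}$, one obtains
\begin{multline*}
\int_\R\theta\ \ge\ \int_\R\theta(z(s))\,z'(s)\,ds\ \ge\ \int_\R\phi(s)^{1-\lambda}\psi(T(s))^{\lambda}\big((1-\lambda)+\lambda T'(s)\big)\,ds\\
=\ \int_\R\phi(s)\,T'(s)^{-\lambda}\big((1-\lambda)+\lambda T'(s)\big)\,ds\ \ge\ \int_\R\phi\ =\ 1.
\end{multline*}
For the equality case in dimension one, equality must hold throughout: the last inequality, since $(1-\lambda)+\lambda r-r^{\lambda}>0$ for $r\ne1$, forces $T'(s)=1$ for a.e.\ $s$ in the support of $\phi$, hence $T(s)=s+d$ for a constant $d$ and $\psi(\cdot)=\phi(\cdot-d)$ a.e.; the middle inequality then gives $\theta(\cdot)=\phi(\cdot-\lambda d)$ a.e.; and feeding these back into the pointwise hypothesis and shifting the second variable by $d$ yields $\phi((1-\lambda)s+\lambda t)\ge\phi(s)^{1-\lambda}\phi(t)^{\lambda}$ for all $s,t$, which — by iterating the fixed weight $\lambda$ to a dense set of weights and using that such a $\phi$ is essentially continuous on the interior of its support — makes $\phi$ log-concave. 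Undoing the normalization produces the constants $a,b$ of the theorem.

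The remaining, and hardest, step is the equality case in dimension $n\ge2$. Granting $\int_{\R^n}h=1$, equality propagates into the one-dimensional inequality for $(F,G,H)$ — yielding a scalar shift $d$ with $G=F(\cdot-d)$, $H=F(\cdot-\lambda d)$ and $F$ log-concave — and, for a.e.\ transport-matched triple of slices $(f_a,g_b,h_c)$, into the $(n-1)$-dimensional equality case — yielding, on each such slice, a constant $\alpha(c)>0$ and a vector $\beta(c)\in\R^{n-1}$ governing how the slice of $g$ is a shifted dilate of the slice of $f$. The obstacle is rigidity: I would need to show $\alpha(c)\equiv1$ and that $\beta(c)$ is constant in $c$, so that the slicewise affine data glue into a single map $x\mapsto x+b$ of $\R^n$, and also that $f$ itself — not merely its marginals and its slices — is log-concave. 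The mechanism I would use is that equality also survives the one-dimensional transport substitution performed in the last coordinate, which pins the horizontal shift $\beta(c)$ to depend affinely, hence (after the slice matching and normalization) constantly, on $c$; carrying this out precisely, together with the attendant measurable-selection and null-set bookkeeping, is exactly the delicate part of Dubuc \cite{Dub77}. (If one only wants the inequality and not the equality case, the induction above already suffices, and the base case can alternatively be obtained from the layer-cake formula and the one-dimensional Brunn--Minkowski inequality $|A+B|\ge|A|+|B|$ after normalizing $\|f\|_\infty=\|g\|_\infty=1$.)
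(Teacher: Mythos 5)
The paper does not prove Theorem~\ref{PL} at all: it is stated as a classical result with attribution to Pr\'ekopa, Leindler, Borell, and Brascamp--Lieb for the inequality and to Dubuc~\cite{Dub77} for the equality case, with Gardner's survey~\cite{gardner} given as a reference. So there is no paper proof to compare against, and I will just assess your sketch on its own.

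Your reduction to dimension one via the slice/marginal decomposition, and the one-dimensional argument by monotone transport together with the AM--GM inequality $(1-\lambda)+\lambda r\ge r^{\lambda}$, is a sound outline of the standard proof of the Pr\'ekopa--Leindler \emph{inequality}, and your handling of measurability via upper integrals or the upper semicontinuous majorant of $h$ is the right caveat to raise. There are some details you wave at rather than check --- $T$ need not be absolutely continuous if $\psi$ has disconnected support, so the chain $\int\theta\ge\int\theta(z)z'$ and the identity $\psi(T)T'=\phi$ a.e.\ need care; and passing from $\lambda$-intermediate log-concavity of $\phi$ at a dense set of weights to genuine log-concavity requires a regularity argument (``essentially continuous'' is the right idea but is asserted, not proved). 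These are patchable.

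The genuine gap is the one you yourself flag: the equality case in dimension $n\ge2$. Having obtained, from the $(n-1)$-dimensional inductive equality case, slicewise data $\alpha(c)>0$ and $\beta(c)\in\R^{n-1}$, you still have to show $\alpha\equiv1$ and $\beta$ constant, and moreover that $f$ itself (not just its slices and its last marginal) is log-concave. You write that ``carrying this out precisely \ldots is exactly the delicate part of Dubuc \cite{Dub77}'' --- i.e., you reduce the hard step to a citation rather than prove it. Since the statement of Theorem~\ref{PL} includes the full equality characterization, your proposal does not actually establish the theorem; it establishes the inequality and the one-dimensional equality case, and for the higher-dimensional rigidity it stops where the real work begins. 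To close the gap you would need to make the gluing argument quantitative: show that the transport equality in the last coordinate forces the slice shifts $\beta(c)$ to vary affinely in $c$, normalize away the affine part by a linear change of variables, and then show that affine variation plus log-concavity of the marginal forces $\beta$ constant and $\alpha\equiv1$; and separately show $f$ is log-concave by propagating the pointwise inequality $h((1-\lambda)x+\lambda y)\ge f(x)^{1-\lambda}g(y)^{\lambda}$ through the identifications, with careful null-set bookkeeping since the slicewise conclusions hold only a.e.
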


Given an orthonormal basis $e_1,\ldots,e_n$ of $\R^n$ and $t_1,\ldots,t_n>0$, we write $\Phi={\rm diag}\,[t_1,\ldots,t_n]$
to denote the positive definite diagonal matrix with $\Phi e_i=t_ie_i$, $i=1,\ldots,n$. 
In addition, for $\eta\in\R$, we set $\Phi^\eta={\rm diag}\,[t_1^\eta,\ldots,t_n^\eta]$.

\begin{theo}[Bollobas-Leader, Saroglou]
\label{coordinatewise-ineq-lemma}
	If $K_1,K_2$  are unconditional convex bodies in $\R^n$ with respect to the same orthonormal basis and $\lambda\in(0,1)$, then
	\begin{equation}
	\label{coordinatewise-ineq}
	V(K_1^{1-\lambda}\cdot K_2^\lambda)\geq V(K_1)^{1-\lambda} V(K_2)^\lambda.
	\end{equation}
	In addition, equality holds  if and only if there exists a positive definite diagonal matrix $\Phi$ such that
$K_2=\Phi K_1$.
\end{theo}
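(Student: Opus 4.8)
The plan is to deduce both the inequality and its equality case from the Pr\'ekopa--Leindler inequality (Theorem~\ref{PL}) via the coordinatewise logarithmic substitution $x_i=e^{s_i}$, under which the coordinatewise product turns into an ordinary Minkowski combination and the volume of an unconditional body turns into a weighted integral with log-linear density. The geometric fact that makes the substitution legitimate is that for an unconditional convex body $M$ the set $\widetilde M:=\{s\in\R^n:(e^{s_1},\ldots,e^{s_n})\in M\}=\log(M\cap\R^n_{++})$ (coordinatewise $\log$) is closed and convex: closedness is continuity of $\exp$, and convexity is exactly Lemma~\ref{coordinatewise-convex} applied with $K_1=K_2=M$, since for $s,t\in\widetilde M$ and $\mu\in(0,1)$ one has $(e^{(1-\mu)s_i+\mu t_i})_i=((e^{s_i})^{1-\mu}(e^{t_i})^{\mu})_i\in M^{1-\mu}\cdot M^{\mu}\subseteq (1-\mu)M+_0\mu M\subseteq M$. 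Note $\widetilde M$ is unbounded, but the substitution gives $\int_{\widetilde M}e^{s_1+\cdots+s_n}\,ds=\mathrm{vol}(M\cap\R^n_{++})=2^{-n}V(M)<\infty$ because $M$ is unconditional.

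For the inequality I would set $f=e^{s_1+\cdots+s_n}\mathbf 1_{\widetilde{K_1}}$, $g=e^{s_1+\cdots+s_n}\mathbf 1_{\widetilde{K_2}}$ and $h=e^{s_1+\cdots+s_n}\mathbf 1_{(1-\lambda)\widetilde{K_1}+\lambda\widetilde{K_2}}$. If $s\in\widetilde{K_1}$ and $t\in\widetilde{K_2}$ then $(1-\lambda)s+\lambda t$ lies in the support of $h$ and $h((1-\lambda)s+\lambda t)=e^{(1-\lambda)(s_1+\cdots+s_n)}e^{\lambda(t_1+\cdots+t_n)}=f(s)^{1-\lambda}g(t)^{\lambda}$; otherwise the right-hand side is $0$. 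Hence Theorem~\ref{PL} yields $\int h\ge(\int f)^{1-\lambda}(\int g)^{\lambda}=2^{-n}V(K_1)^{1-\lambda}V(K_2)^{\lambda}$. It remains to identify $\int h$: one checks the set identity $(1-\lambda)\widetilde{K_1}+\lambda\widetilde{K_2}=\widetilde{K_1^{1-\lambda}\cdot K_2^{\lambda}}$, where ``$\subseteq$'' is immediate from the definition of the coordinatewise product and ``$\supseteq$'' uses that a point of $(K_1^{1-\lambda}\cdot K_2^{\lambda})\cap\R^n_{++}$ forces its representatives in $K_1,K_2$ to have strictly positive coordinates (replace them by their coordinatewise absolute values, using unconditionality), so that its coordinatewise logarithm is the corresponding convex combination. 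Since $K_1^{1-\lambda}\cdot K_2^{\lambda}$ is an unconditional convex body by Lemma~\ref{coordinatewise-convex}(i), we get $\int h=2^{-n}V(K_1^{1-\lambda}\cdot K_2^{\lambda})$, which gives \eqref{coordinatewise-ineq}.

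For the equality case, assume equality in \eqref{coordinatewise-ineq}; since $\int f,\int g>0$, the equality clause of Theorem~\ref{PL} provides $a>0$ and $b\in\R^n$ with $g(\,\cdot\,)=a\,f(\,\cdot\,+b)$ almost everywhere. Matching the log-linear densities forces $a=e^{-(b_1+\cdots+b_n)}$, and matching supports forces $\widetilde{K_2}=\widetilde{K_1}-b$ up to a null set; as both sides are closed convex sets whose defining bodies have positive measure, the equality is exact. Pushing $\widetilde{K_2}=\widetilde{K_1}-b$ forward through $\exp$ gives $z\in K_2\cap\R^n_{++}\iff (e^{b_1}z_1,\ldots,e^{b_n}z_n)\in K_1\cap\R^n_{++}$, i.e. $K_2\cap\R^n_{++}=\Phi(K_1\cap\R^n_{++})$ with $\Phi=\mathrm{diag}[e^{-b_1},\ldots,e^{-b_n}]$, and unconditionality promotes this to $K_2=\Phi K_1$. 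Conversely, if $K_2=\Phi K_1$ with $\Phi$ positive definite diagonal, then $K_1^{1-\lambda}\cdot K_2^{\lambda}=\Phi^{\lambda}(K_1^{1-\lambda}\cdot K_1^{\lambda})=\Phi^{\lambda}K_1$ (the last equality because $K_1^{1-\lambda}\cdot K_1^{\lambda}=K_1$, using coincident representatives together with unconditionality and Lemma~\ref{coordinatewise-convex}(ii)), and computing volumes gives equality in \eqref{coordinatewise-ineq}.

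The routine part is the forward inequality; the delicate part, and presumably where \cite{Sar15} has its gap, is the equality analysis: one must argue carefully that the almost-everywhere identity $g=a\,f(\,\cdot\,+b)$ really pins $\widetilde{K_2}$ down as an \emph{exact} translate of $\widetilde{K_1}$ (this is where the unboundedness of the log-domains and the strict positivity of the weight must be used cleanly), and that the resulting relation, a priori only on the positive orthant, genuinely comes from a positive definite diagonal matrix on all of $\R^n$. I would therefore spell these two steps out in full rather than leaving them implicit.
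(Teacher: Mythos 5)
Your proof is correct and essentially mirrors the paper's: the coordinatewise logarithmic substitution turns the claim into an instance of Pr\'ekopa--Leindler (Theorem~\ref{PL}), and the equality case follows by converting the almost-everywhere translation identity on the log-domains back into $K_2=\Phi K_1$ for a positive definite diagonal $\Phi$. The paper's version is marginally leaner in that it takes $h$ to be the pulled-back indicator of $K_0=K_1^{1-\lambda}\cdot K_2^\lambda$ directly, so only the easy inclusion $(1-\lambda)\widetilde{K_1}+\lambda\widetilde{K_2}\subseteq\widetilde{K_0}$ is used, whereas your choice of $h$ requires the full set identity; otherwise the arguments coincide. One factual correction to your closing remark: the gap in Saroglou \cite{Sar15} that this paper repairs is \emph{not} in the equality analysis of the present theorem (which the paper credits to Saroglou without caveat), but rather in the analogue of Lemma~\ref{logsumPhi}, where \cite{Sar15} tacitly assumed that an unconditional convex body which is not a direct sum of two lower-dimensional unconditional pieces must possess a smooth boundary point whose outer normal has all coordinates nonzero --- a property the paper shows can fail.
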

\noindent{\bf Remark } If $K_2=\Phi K_1$ for a positive definite diagonal matrix $\Phi$, then
$K_1^{1-\lambda}\cdot K_2^\lambda=\Phi^\lambda K_1$.
\begin{proof}
Let $K_0=K_1^{1-\lambda}\cdot K_2^\lambda$, and let us consider the log-concave functions
$$
f_i(x_1,\ldots,x_n)=\mathbf{1}_{K_i}(e^{x_1},\ldots,e^{x_n})e^{x_1+\ldots+x_n}
\mbox{ \ \ \ for $i=0,1,2$,}
$$
and the open sets
$$
\Omega_i=\{(x_1,\ldots,x_n)\in\R^n:\, (e^{x_1},\ldots,e^{x_n})\in{\rm int}\,K_i\}
\mbox{ \ \ \ for $i=0,1,2$.}
$$
For $i=0,1,2$, we have
\begin{equation}
\label{cornervolume}
\int_{\R^n}f_i=\int_{\Omega_i}e^{x_1+\ldots+x_n}dx_1\ldots dx_n=V\left(K_i\cap(0,\infty)^n\right)=V(K_i)/2^n,
\end{equation}
and the definition of the coordinatewise product yields
$$
f_0((1-\lambda)x+\lambda\,y)\geq f_1(x)^{1-\lambda}f_2(y)^\lambda
\mbox{ \ \ \ \ for $x,y\in\R^n$.}
$$
Therefore the condition in the Pr\'ekopa-Leindler inequality Theorem~\ref{PL} is satisfied, and we deduce that
$$
\frac{V(K_0)}{2^n}=\int_{\R^n}f_0\geq \left(\int_{\R^n}f_1\right)^{1-\lambda}
\left(\int_{\R^n}f_2\right)^\lambda
=\left(\frac{V(K_1)}{2^n}\right)^{1-\lambda}
\left(\frac{V(K_2)}{2^n}\right)^\lambda,
$$
proving \eqref{coordinatewise-ineq}. 

Let us assume that we have equality in \eqref{coordinatewise-ineq}, and hence also in the corresponding Pr\'ekopa-Leindler inequality. In particular, there exists $a>0$ and $b\in\R^n$ such that
$$
\mathbf{1}_{K_1}(e^{x_1},\ldots,e^{x_n})e^{x_1+\ldots+x_n}=f_1(x)=af_2(x+b)=
a\mathbf{1}_{K_2}(e^{x_1+b_1},\ldots,e^{x_n+b_n})e^{x_1+b_1+\ldots+x_n+b_n}
$$
for Lebesgue almost all $(x_1,\ldots,x_n)\in\R^n$.  Since $f_i$ is continuous on $\Omega_i$ and on 
$\R^n\backslash({\rm cl}\,\Omega_i)$ for $i=1,2$, it follows that 
$$
\mathbf{1}_{K_1}(e^{x_1},\ldots,e^{x_n})=
ae^{b_1+\ldots+b_n}\mathbf{1}_{K_2}(e^{x_1+b_1},\ldots,e^{x_n+b_n})
$$
for each $(x_1,\ldots,x_n)\in\Omega_1$ and $(x_1,\ldots,x_n)\in\R^n\backslash({\rm cl}\,\Omega_1)$; therefore,
$ae^{b_1+\ldots+b_n}=1$ and
${\rm int}\,K_2\cap \R^n_+=\Phi ({\rm int}\,K_1\cap \R^n_+)$ for $\Phi={\rm diag}[e^{-b_1},\ldots,e^{-b_n}]$. In turn, we conclude
$K_2=\Phi K_1$.

On the other hand, if $K_2=\Phi K_1$ for a positive definite diagonal matrix $\Phi$, then readily $K_1^{1-\lambda}\cdot K_2^\lambda=\Phi^\lambda K_1$, and we have equality in \eqref{coordinatewise-ineq}.
\end{proof}

The upcoming Lemma~\ref{logsumPhi} is the only novel contribution of this manuscript about characterizing the equality case \eqref{Sarogloucorrected} of the log-Brunn-Minkowski inequality for unconditional convex bodies.
When proving the analogue of Lemma~\ref{logsumPhi},
Saroglou \cite{Sar15} assumed that if an unconditional convex body $K$ can't be written as the direct sum of at least two lower dimensional 
unconditional compact convex sets, then there exists $x\in\partial'K$ such that each coordinate of $\nu_K(x)$ is positive.
However, this property may not hold. Let $n\geq 3$, and let
$$
K=\left\{(x_1,\ldots,x_n)\in \R^n:\, \sum_{i=1}^{n-1}x_i^2\leq 1\mbox{ and }
\sum_{i=2}^{n}x_i^2\leq 1\right\}.
$$
In this case,
$$
\nu_K( \partial' K)=\left\{(u_1,\ldots,u_n)\in S^{n-1}:\, u_1 u_n=0\right\};
$$
therefore, there exists no $x\in\partial'K$ such that each coordinate of $\nu_K(x)$ is positive on the one hand,
and Lemma~\ref{SKsumofmany} yields that $K$ can't be written as the direct sum of at least two lower dimensional 
unconditional compact convex sets on the other hand.

\begin{lemma}
\label{logsumPhi}
Let $K$ be an unconditional convex body in $\R^n$, 
 and let 
$\Phi$ be a positive definite diagonal matrix with eigenspaces  $\xi_1,\ldots,\xi_m$, $m\geq 2$
where the eigenvalues corresponding to $\xi_i$ and $\xi_j$ are different if $i\neq j$.
If $V((1-\lambda)\cdot K+_0\lambda\cdot (\Phi K))=V(K)^{1-\lambda} V(\Phi K)^\lambda$
for some $\lambda\in(0,1)$, then
$K=K_1\oplus\ldots\oplus K_m$ where $K_i\subset \xi_i$ is an unconditional compact convex set for $i=1,\ldots,m$.
\end{lemma}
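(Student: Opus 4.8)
The plan is to reduce the equality to one for the coordinatewise product, read off a pointwise identity for the support function along the eigendirections of $\Phi$, and then extract a local contradiction at any ``mixed'' normal direction.

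First I would sandwich. By Lemma~\ref{coordinatewise-convex} the coordinatewise product $K^{1-\lambda}\cdot(\Phi K)^\lambda$ is an unconditional convex body contained in $(1-\lambda)\cdot K+_0\lambda\cdot(\Phi K)$, while Theorem~\ref{coordinatewise-ineq-lemma} gives $V(K^{1-\lambda}\cdot(\Phi K)^\lambda)\ge V(K)^{1-\lambda}V(\Phi K)^\lambda$. Since the Remark after Theorem~\ref{coordinatewise-ineq-lemma} identifies $K^{1-\lambda}\cdot(\Phi K)^\lambda=\Phi^\lambda K$, whose volume is exactly $V(K)^{1-\lambda}V(\Phi K)^\lambda$, the hypothesis forces all these volumes to agree, and a convex body with the same volume sitting inside another is equal to it; hence
$$(1-\lambda)\cdot K+_0\lambda\cdot(\Phi K)=K^{1-\lambda}\cdot(\Phi K)^\lambda=\Phi^\lambda K .$$

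Next I would make this pointwise. The body $(1-\lambda)\cdot K+_0\lambda\cdot(\Phi K)$ is by definition the Wulff shape of $f(v):=h_K(v)^{1-\lambda}h_{\Phi K}(v)^\lambda$, so $h_{\Phi^\lambda K}\le f$; if $f(v_0)>h_{\Phi^\lambda K}(v_0)$ at some $v_0$ in the normal image $\nu_{\Phi^\lambda K}(\partial'\Phi^\lambda K)$, the (unique‑normal) boundary point of $\Phi^\lambda K$ in direction $v_0$ would lie in the interior of that Wulff shape, contradicting the displayed equality. Thus $h_{\Phi^\lambda K}(v)=f(v)$ on the normal image of $\Phi^\lambda K$. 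Writing $v=\Phi^{-\lambda}u$ with $u\in\nu_K(\partial'K)$ — these are, up to scaling, exactly the normals of $\Phi^\lambda K$ — and using $h_{\Phi K}(w)=h_K(\Phi w)$, $h_{\Phi^\lambda K}(w)=h_K(\Phi^\lambda w)$, this becomes
$$h_K(u)=h_K(\Phi^{-\lambda}u)^{1-\lambda}\,h_K(\Phi^{1-\lambda}u)^{\lambda}\qquad\text{for every }u\in\nu_K(\partial'K).$$
Passing to logarithmic coordinates, $g(p):=\log h_K(e^{p_1},\dots,e^{p_n})$ is convex on $\R^n$ (a supremum of log‑linear functions is log‑convex); letting $\tau$ be the vector whose $i$-th entry is the logarithm of the eigenvalue of $\Phi$ on coordinate $i$ (so $\tau$ is constant on each block $\xi_j$), the identity says $g$ meets the chord over $[\log u-\lambda\tau,\log u+(1-\lambda)\tau]$ at the interior point $\log u$; convexity then forces $g$ to be affine on that segment, i.e. $r\mapsto g(\log u+r\tau)$ is affine near $r=0$. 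The same conclusion holds, after projecting $K$ onto the coordinate subspace spanned by $\operatorname{supp}u$, for normals $u$ with vanishing coordinates.

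Finally, the endgame: suppose $\nu_K(\partial'K)\not\subseteq\xi_1\cup\dots\cup\xi_m$ and pick $u_0\in\nu_K(\partial'K)$ whose support meets two distinct blocks, with $u_0\ge o$ by unconditionality. Put $\eta:=\operatorname{span}(\operatorname{supp}u_0)$ and $M:=P_\eta K$ (orthogonal projection onto $\eta$); then $\Phi$ restricts to a positive definite diagonal map on $\eta$ with at least two distinct eigenvalues, $u_0$ is strictly positive in $\eta$, and $u_0$ is still a normal of $M$ at a smooth point $x_0$ (namely $x_0=P_\eta\tilde x_0$ for the smooth point $\tilde x_0$ of $K$ with normal $u_0$), for which the identity of the previous paragraph, restricted to $\eta$, still holds. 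Two facts collide. First, since the normal $u_0$ at $x_0$ is strictly positive and unique, $x_0$ has full support in $\eta$: otherwise a coordinate reflection of $\eta$ fixes $x_0$ while moving $u_0$, violating uniqueness of the normal. Second, writing $\tau_\eta$ for the (non‑constant) vector of log‑eigenvalues of $\Phi|_\eta$, the local affineness makes $\psi(r):=h_M\!\bigl(e^{\log u_0+r\tau_\eta}\bigr)$ log‑affine near $r=0$, whereas $\psi(r)\ge\langle x_0,e^{\log u_0+r\tau_\eta}\rangle=\sum_i c_i e^{r(\tau_\eta)_i}=:\theta(r)$ with $c_i=(x_0)_i(u_0)_i>0$, and $\psi(0)=\theta(0)$. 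Matching first derivatives and comparing second derivatives at $0$ yields $\bigl(\sum_i c_i(\tau_\eta)_i\bigr)^2\ge\bigl(\sum_i c_i\bigr)\bigl(\sum_i c_i(\tau_\eta)_i^2\bigr)$, which by Cauchy–Schwarz forces $\tau_\eta$ to be constant — contradicting that $\Phi|_\eta$ has two distinct eigenvalues. Hence $\nu_K(\partial'K)\subseteq\xi_1\cup\dots\cup\xi_m$, and Lemma~\ref{SKsumofmany} gives $K=K_1\oplus\dots\oplus K_m$ with each $K_i\subset\xi_i$ unconditional. I expect the endgame (the second fact above) to be the main obstacle: the normal image of $K$ can be very small and $h_K$ need not be differentiable there, so one cannot use a global growth estimate and must argue with the explicit supporting point $x_0$ and purely second‑order, local information.
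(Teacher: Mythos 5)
Your proof is correct, but it takes a genuinely different route from the paper's. Both start by sandwiching $K^{1-\lambda}\cdot(\Phi K)^\lambda=\Phi^\lambda K\subset(1-\lambda)\cdot K+_0\lambda\cdot(\Phi K)$ and using the volume equality to conclude $(1-\lambda)\cdot K+_0\lambda\cdot(\Phi K)=\Phi^\lambda K$. Both then fix a smooth boundary point $x$ with normal $u=\nu_K(x)$ whose support meets at least two eigenblocks, and both observe (via the same unconditionality-plus-uniqueness-of-normal argument) that $x$ has nonzero coordinates at the relevant indices. The mechanisms then diverge. The paper's proof is more economical: it sets $v=\Phi^{-\lambda}u$, notes that $v$ is the normal of $\Phi^\lambda K$ at $\Phi^\lambda x$, so equality in the Wulff-shape description gives $\langle v,\Phi^\lambda x\rangle=h_K(v)^{1-\lambda}h_{\Phi K}(v)^\lambda$, then observes that $v$ is parallel to neither $u=\nu_K(x)$ nor $\Phi^{-1}u=\nu_{\Phi K}(\Phi x)$ (since the eigenvalues differ), which gives the two strict inequalities $\langle v,x\rangle<h_K(v)$ and $\langle v,\Phi x\rangle<h_{\Phi K}(v)$, and finally closes with H\"older: $\langle v,\Phi^\lambda x\rangle\le\langle v,x\rangle^{1-\lambda}\langle v,\Phi x\rangle^\lambda<h_K(v)^{1-\lambda}h_{\Phi K}(v)^\lambda$, an immediate contradiction. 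Your proof instead converts the equality into affineness of $\log h_K$ in log-coordinates along a $\tau$-segment, projects onto $\eta=\operatorname{span}(\operatorname{supp}u_0)$ to make those coordinates finite, and then gets the contradiction by matching first and comparing second derivatives of the log-affine $\psi$ against the supporting function $\theta(r)=\sum c_ie^{r(\tau_\eta)_i}$, finishing with Cauchy--Schwarz. What your route buys is a clean conceptual picture (equality in log-Brunn--Minkowski forces log-affineness of the support function along the deformation direction); what the paper's route buys is the avoidance of any differentiability or projection considerations — the H\"older inequality applied at the single point $x$, plus the two non-parallelism strictnesses, is all that is needed, which is shorter and sidesteps precisely the technical concern you flagged at the end.
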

\begin{proof}
Since $\Phi$ is a positive definite diagonal matrix, the definition of the coordinatewise product yields
that $K^{1-\lambda}\cdot (\Phi K)^\lambda=\Phi^\lambda\left(K^{1-\lambda}\cdot K^\lambda\right)$.
On the other hand, Lemma~\ref{coordinatewise-convex} (ii) implies that
$$
K\subset K^{1-\lambda}\cdot K^\lambda\subset (1-\lambda)\cdot K +_0 \lambda\cdot K=K.
$$

Since $V(K)^{1-\lambda} V(\Phi K)^\lambda=(\det \Phi^\lambda)V(K)$, and
$$
\Phi^\lambda K=K^{1-\lambda}\cdot (\Phi K)^\lambda\subset (1-\lambda)\cdot K+_0\lambda\cdot (\Phi K),
$$
 we deduce from Theorem~\ref{coordinatewise-ineq-lemma} that
\begin{equation}
\label{Philambda}
\Phi^\lambda K= (1-\lambda)\cdot K+_0\lambda\cdot (\Phi K).
\end{equation}

We prove Lemma~\ref{logsumPhi} by contradiction; therefore, according to Lemma~\ref{SKsumofmany},
we suppose that there exists a $u=(u_1,\ldots,u_n)\in \nu_K( \partial' K)\backslash(\cup_{i=1}^m \xi_i)$.
Since $K$ is unconditional, we may assume that $u\in\R^n_+$. 
Let $e_1,\ldots,e_n$ be the orthonormal basis of $\R^n$. Possibly after reindexing, 
we may also assume that $e_1\in \xi_1$ and $u_1>0$; moreover,
$e_n\in\xi_m$ and $u_n>0$.

Let $\Phi e_i=t_i e_i$ for $t_i>0$, $i=1,\ldots,m$. Since $e_1\in \xi_1$ and $e_n\in\xi_m$, we have
$t_1\neq t_n$; therefore,
\begin{equation}
\label{non-parallel}
\mbox{neither $u$ nor $\Phi^{-1} \,u$ is parallel to $v=\Phi^{-\lambda} u$.}
\end{equation}
As $K$ is unconditional and $u \in \nu_K( \partial' K)$, we have $u=\nu_K(x)$ for some
$x=(x_1,\ldots,x_n)\in\R^n_+\cap \partial' K$. As $x$ is a smooth boundary point and $u_1,u_n>0$, it follows that
$x_1,x_n>0$.

We observe that $v=\Phi^{-\lambda}u$ is an exterior normal at the smooth boundary point 
$\Phi^\lambda x$ of $\Phi^\lambda K$. Combining this property with \eqref{Philambda} and \eqref{L0sumsmooth} yields that
\begin{equation}
\label{Philambdaxu}
\langle v,\Phi^\lambda x  \rangle=
h_K(v)^{1-\lambda}h_{\Phi K}(v)^\lambda.
\end{equation}
On the other hand, $\nu_K(x)=u$ for $x\in\partial' K$ and
$\nu_{\Phi K}(\Phi x)=\Phi^{-1}u$ for $\Phi x\in\partial'(\Phi K)$, 
we deduce from \eqref{non-parallel} that
$$
\langle  v,x  \rangle<
h_K(v) \mbox{ \ and \ }
\langle v,\Phi x  \rangle<
h_{\Phi K}(v).
$$
In particular, the H\"older inequality yields that
\begin{eqnarray*}
\langle  v,\Phi^\lambda x  \rangle&=&\sum_{i=1}^nu_ix_i\leq
\left(\sum_{i=1}^nt_i^{-\lambda}u_ix_i\right)^{1-\lambda}
\left(\sum_{i=1}^nt_i^{1-\lambda}u_ix_i\right)^\lambda=\langle  v,x  \rangle^{1-\lambda}
\langle v,\Phi x  \rangle^\lambda\\
&<&h_K(v)^{1-\lambda}h_{\Phi K}(v)^\lambda.
\end{eqnarray*}
This contradicts \eqref{Philambdaxu}, and proves Lemma~\ref{logsumPhi}.
\end{proof}

Finally, we characterize the equality case of Theorem~\ref{B.F.M.S.}.

\begin{prop}
\label{B.F.M.S.equa}
	If $K$ and $L$ are unconditional convex bodies in $\R^n$ with respect to the same orthonormal basis and
 $\lambda\in(0,1)$, then
\begin{equation}
\label{logBMequa0}
V((1-\lambda)\cdot K +_0 \lambda\cdot L)= V(K)^{1-\lambda} V(L)^\lambda,
\end{equation}
holds if and only if $K=K_1\oplus\ldots \oplus K_m$ and $L=L_1\oplus\ldots \oplus L_m$ for unconditional compact convex sets 
	$K_1,\ldots, K_m,L_1,\ldots,L_m$ of dimension at least one, $m\geq 1$ where $\sum_{i=1}^m{\rm dim}\,K_i=n$
	and $K_i$ and $L_i$ are dilates, $i=1,\ldots,m$.
\end{prop}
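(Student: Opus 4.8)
The plan is to derive the equality characterisation from Lemma~\ref{coordinatewise-convex}, Lemma~\ref{coordinatewise-ineq-lemma} and the new Lemma~\ref{logsumPhi}; the substance is the ``only if'' direction. So suppose \eqref{logBMequa0} holds. By Lemma~\ref{coordinatewise-convex}(ii) we have $K^{1-\lambda}\cdot L^\lambda\subset(1-\lambda)\cdot K+_0\lambda\cdot L$, so, invoking also Theorem~\ref{coordinatewise-ineq-lemma},
$$
V(K)^{1-\lambda}V(L)^\lambda\le V\bigl(K^{1-\lambda}\cdot L^\lambda\bigr)\le V\bigl((1-\lambda)\cdot K+_0\lambda\cdot L\bigr)=V(K)^{1-\lambda}V(L)^\lambda,
$$
which forces equality in Theorem~\ref{coordinatewise-ineq-lemma}, hence $L=\Phi K$ for a positive definite diagonal matrix $\Phi$. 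Let $\xi_1,\dots,\xi_m$ be the eigenspaces of $\Phi$, with pairwise distinct eigenvalues $c_1,\dots,c_m>0$; these are complementary non-trivial coordinate subspaces spanning $\R^n$. If $m=1$ then $L=c_1K$ is a dilate of $K$, and we are done with a single summand. If $m\ge2$, then \eqref{logBMequa0} is exactly the hypothesis of Lemma~\ref{logsumPhi} with $L=\Phi K$, so that lemma yields $K=K_1\oplus\dots\oplus K_m$ with each $K_i\subset\xi_i$ an unconditional compact convex set; since $K$ is full-dimensional, $ {\rm dim}\,K_i={\rm dim}\,\xi_i\ge1$ and $\sum_i{\rm dim}\,K_i=n$, and $L=\Phi K=(c_1K_1)\oplus\dots\oplus(c_mK_m)$, so $L_i:=c_iK_i$ is a dilate of $K_i$. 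This is the asserted decomposition.

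For the ``if'' direction I would first record that the $L_0$-sum respects direct sums over complementary coordinate subspaces: if $A=A_1\oplus A_2$ and $B=B_1\oplus B_2$ with $A_i,B_i$ lying in complementary coordinate subspaces $\xi_1,\xi_2$, then
$$
(1-\lambda)\cdot A+_0\lambda\cdot B=\bigl((1-\lambda)\cdot A_1+_0\lambda\cdot B_1\bigr)\oplus\bigl((1-\lambda)\cdot A_2+_0\lambda\cdot B_2\bigr).
$$
This follows from $h_A(u)=h_{A_1}(P_{\xi_1}u)+h_{A_2}(P_{\xi_2}u)$ (and likewise for $B$): testing the defining inequality $\langle x,u\rangle\le h_A(u)^{1-\lambda}h_B(u)^\lambda$ against $u\in\xi_1$ and against $u\in\xi_2$ forces the $\xi_1$- and $\xi_2$-components of $x$ into the corresponding $L_0$-sums, while the reverse inclusion uses the superadditivity estimate $a_1^{1-\lambda}b_1^\lambda+a_2^{1-\lambda}b_2^\lambda\le(a_1+a_2)^{1-\lambda}(b_1+b_2)^\lambda$. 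Iterating this and using $L_i=c_iK_i$, one finds that $(1-\lambda)\cdot K+_0\lambda\cdot L=\bigoplus_{i=1}^m c_i^\lambda K_i$, of volume $\prod_i c_i^{\lambda\,{\rm dim}\,K_i}V(K_i)=V(K)^{1-\lambda}V(L)^\lambda$, which gives \eqref{logBMequa0}.

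I expect the only genuine obstacle to sit inside Lemma~\ref{logsumPhi}, which must avoid the false claim --- the source of the gap in \cite{Sar15} --- that an unconditional convex body which is not a direct sum of lower-dimensional unconditional pieces has a smooth boundary point whose outer normal has all coordinates positive. Granting Lemma~\ref{logsumPhi}, the steps above are routine bookkeeping: passing to the coordinatewise product to reduce to the diagonal case, and transporting the splitting of $K$ across $\Phi$ to obtain that of $L$.
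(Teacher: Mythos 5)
Your proposal is correct and follows the same route as the paper: squeeze the volume of the coordinatewise product $K^{1-\lambda}\cdot L^{\lambda}$ between the two sides of \eqref{logBMequa0} using Lemma~\ref{coordinatewise-convex}(ii) and Theorem~\ref{coordinatewise-ineq-lemma}, deduce $L=\Phi K$ for a positive definite diagonal $\Phi$, and then invoke Lemma~\ref{logsumPhi} to split $K$ along the eigenspaces of $\Phi$. The ``if'' direction you spell out (that $L_0$-sums respect direct sums over complementary coordinate subspaces, yielding $(1-\lambda)\cdot K+_0\lambda\cdot L=\bigoplus_i c_i^{\lambda}K_i=\Phi^{\lambda}K$) is the same fact the paper states more tersely; your verification of it is sound.
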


\begin{proof}
On the one hand, if we have $K$ and $L$ as described after \eqref{logBMequa0},
then we have equality according to Lemma~\ref{sumlowerlogBMequa}.

On the other hand, if \eqref{logBMequa0} holds, then
we deduce from Lemma~\ref{coordinatewise-convex}
and Theorem~\ref{coordinatewise-ineq-lemma} that
that
$$
V(K^{1-\lambda}\cdot L^\lambda)= V(K)^{1-\lambda} V(L)^\lambda.
$$
According to Lemma~\ref{coordinatewise-ineq-lemma}, there exists 
a positive definite diagonal matrix $\Phi$ such that $L=\Phi K$.
Finally Lemma~\ref{logsumPhi} completes the proof of Proposition~\ref{B.F.M.S.equa}.
\end{proof}

\noindent {\bf Acknowledgement } We are grateful for helpful discussions with Franck Barthe and Martin Henk. We are deeply indebted to the referee who has corrected various mistakes and simplified various arguments in an earlier version of the paper and whose remarks have significantly improved the presentation.

\end{document}